\newtheorem{theorem}{Theorem}[section]
\newtheorem{lemma}[theorem]{Lemma}
\newtheorem{proposition}[theorem]{Proposition}
\newtheorem{corollary}[theorem]{Corollary}
\theoremstyle{definition}
\newtheorem{definition}[theorem]{Definition}
\newtheorem{example}[theorem]{Example}
\newtheorem{xca}[theorem]{Exercise}
\theoremstyle{remark}
\newtheorem{remark}[theorem]{Remark}
\definecolor{A}{rgb}{.75,1,.75}
\numberwithin{equation}{section}
\newcommand{\aaa}{\mf a^+_\infty}
\newcommand{\bbb}{\mf b_\infty}
\newcommand{\ccc}{\mf c_\infty}
\newcommand{\ddd}{\mf d_\infty}
\newcommand{\ch}{\text{ch} }
\newcommand{\C}{ \mathbb C}
\newcommand{\etabf}{\underline{\eta}}
\newcommand{\Cx}{\C[\x,\etabf]}
\newcommand{\ep}{ \epsilon}
\newcommand{\End}{\text{End}}
\newcommand{\h}{\mathfrak{h}}
\newcommand{\ind}{\text{Ind}}
\newcommand{\ga}{\mathfrak{g}}
\newcommand{\gl}{\mathfrak{gl}}
\newcommand{\glmn}{\mathfrak{gl}(m|n)}
\newcommand{\gr}{\text{gr} }
\newcommand{\Hom}{\text{Hom} }
\newcommand{\hf}{\frac12}
\newcommand{\la}{\lambda}
\newcommand{\La}{\Lambda}
\newcommand{\n}{\mathfrak n}
\newcommand{\N}{\mathbb N}
\newcommand{\ov}{\overline}
\newcommand{\osp}{\mf{osp}}
\newcommand{\spo}{\mf{spo}}
\newcommand{\ev}[1]{{#1}_{\bar{0}}}
\newcommand{\od}[1]{{#1}_{\bar{1}}}
\newcommand{\Pmn}{\mathcal P(m|n)}
\newcommand{\Pdmn}{\mathcal P(d, m|n)}
\newcommand{\Pdn}{\mathcal P(d, n)}
\newcommand{\Pdm}{\mathcal P(d, m)}
\newcommand{\R}{ \mathbb R}
\newcommand{\Sp}{\text{Sp}}
\newcommand{\sgn}{\text{sgn}}
\newcommand{\sgl}{\mathfrak{sl}}
\newcommand{\str}{\text{str}}
\newcommand{\tr}{\text{tr}}
\newcommand{\Tr}{\mathfrak{Tr}}
\newcommand{\wt}{\widetilde}
\newcommand{\x}{ \underline{x}}
\newcommand{\y}{ \underline{y}}
\newcommand{\Z}{ \mathbb Z }
\newcommand{\mc}{\mathcal}
\newcommand{\mf}{\mathfrak}
\title[Dualities for Lie superalgebras]
{Dualities for Lie superalgebras}
\author{Shun-Jen Cheng}
\address{Institute of Mathematics, Academia Sinica, Taipei,
Taiwan 10617} \email{chengsj@math.sinica.edu.tw}
\author{Weiqiang Wang}
\address{Department of Mathematics, University of Virginia, Charlottesville, VA 22904}
\email{ww9c@virginia.edu}
\begin{document}

\maketitle

\begin{abstract}
We explain how Lie superalgebras of types $\gl$ and $\osp$ provide a natural framework
generalizing the classical Schur and Howe dualities. This exposition includes a
discussion of super duality, which connects the parabolic categories $O$ between
classical Lie superalgebras and Lie algebras. Super duality provides a conceptual
solution to the irreducible character problem for these Lie superalgebras in terms of the
classical Kazhdan-Lusztig polynomials.
\end{abstract}

\setcounter{tocdepth}{1} \tableofcontents

\section*{Introduction}

\subsection{} \label{sec:obstacle}
The study of Lie superalgebras, supergroups, and their representations was largely
motivated by supersymmetry in mathematical physics, which puts bosons and fermions on the
same footing. An earlier achievement is the Cartan-Killing type classification of
finite-dimensional simple complex Lie superalgebras by Kac \cite{K1} (also cf. \cite{SNR}
for an independent classification of the so-called classical Lie superalgebras). The
most important basic classical Lie superalgebras consist of
infinite series of types $\mf{sl}, \osp$. The basic classical
Lie superalgebras afford root systems, Dynkin diagrams, Cartan subalgebras, triangular
decomposition, Verma modules, category $O$, and so on. There has been much work on
representation theory of Lie superalgebras (in particular, basic classical) in the last
three decades, but conceptual approaches have been lacking until recently.

\subsection{}

The aim of these lecture notes is to explain three different kinds of dualities for Lie
superalgebras:
$$
\text{Schur duality, } \quad \text{Howe duality,} \quad
\text{and Super duality.}
$$
In the superalgebra setting, the first (i.e.~Schur) duality was formulated by Sergeev,
and the latter two dualities have been largely developed by the authors and their
collaborators. These lecture notes are also intended to serve as a road map for a
forthcoming book by the authors.

The Schur-Sergeev duality is an interplay between Lie superalgebras and the symmetric
groups which incorporates the trivial and sign modules in a unified framework. On the
algebraic combinatorial level, there is a natural super generalization of the notion of
semistandard tableaux which is a hybrid of the traditional version and its conjugate
counterpart.

It has been observed that much of the study of the classical invariant theory on
polynomial algebras has parallels for exterior algebras, and both admit reformulation and
extension in the theory of Howe's reductive dual pairs. Lie superalgebras allow a uniform
treatment of Howe duality on the polynomial and exterior algebras.

Super duality has a different flavor. It views representation theories of Lie
superalgebras and Lie algebras as two sides of the same coin, and it is an unexpected
yet powerful approach developed in the past few years which allow us to overcome various
superalgebra difficulties. We give an exposition on the new development on super duality
which is an equivalence between parabolic categories $\mc O$ of Lie superalgebras and Lie
algebras. Super duality provides a conceptual solution to the long-standing irreducible
character problem for a wide class of modules over (a wide class of) Lie superalgebras in
terms of Kazhdan-Lusztig polynomials. This is achieved despite the fact that there are no
obvious Weyl groups controlling the linkage for super representation theory.

\subsection{}

In Section~\ref{sec:abc}, we give some basic constructions and
structures of the general linear and the ortho-symplectic Lie
superalgebras. We emphasize the super phenomena that are not
observed in the ordinary Lie algebra setting, such as odd roots,
non-conjugate Borel subalgebras, and so on. In
Section~\ref{sec:typical}, we present Kac's classification of
finite-dimensional simple $\ga$-modules \cite{K2}. The
classification is very easy for type $A$, but nontrivial for
$\osp$. In the latter case we explain a new odd reflection
approach by Shu and the second author \cite{SW}, using a  more
natural labeling of these modules by hook partitions. We note that
odd reflection is also one of the main technical tools in super
duality. In addition, we  present the {\em typical}
finite-dimensional irreducible character formula, following
\cite{K2}.

\subsection{}
The classical Schur duality relates the representation theory of the general linear Lie
algebras and that of the symmetric groups. In Section~\ref{sec:Schur}, we explain
Sergeev's generalization \cite{Sv} of Schur duality for the general linear Lie
superalgebras $\glmn$ (also see Berele and Regev \cite{BeR} for additional insight and
detail). More precisely, we establish a double centralizer theorem for the actions of
$\glmn$ and the symmetric group $\mf S_d$ in $d$ letters on the tensor space
$(\C^{m|n})^{\otimes d}$. We then provide an explicit multiplicity-free decomposition of
the tensor space into a $U(\glmn) \otimes \C \mf S_d$-modules. We further present a
simple formula obtained in our latest work with Lam \cite{CLW} for extremal weights in a
simple polynomial $\glmn$-module  with respect to all Borel subalgebras, which has an
explicit diagramatic interpretation from a Young diagram.

\subsection{}
Howe's theory of reductive dual pairs \cite{H1, H2} can be viewed
as a representation theoretic reformulation and extension of the
classical invariant theory (see Weyl \cite{We}). For example, the
first fundamental theorem on invariants for classical groups are
reformulated in terms of double centralizer properties of two
classical Lie groups/algebras. One advantage of Howe duality is
that it allows natural generalizations to classical Lie
groups/algebras (and superalgebras) other than type $A$.

We mainly use two examples of dual pairs to illustrate the main ideas of Howe duality and
the new phenomena of superalgebra generalizations. For more detailed case study of Howe
duality for Lie superalgebras, we refer to the original papers \cite{BPT, CW1, CW2, CW3,
CL1, CLZ, CZ, CKW, LZ, Sv2}. In Section~\ref{sec:Howe}, we formulate the $(\glmn,
\gl(d))$-Howe duality and find the highest weight vectors for each isotypical component
in the corresponding multiplicity-free decomposition. In Section~\ref{sec:Howe app} we
present the $(\Sp(d), \osp(2m|2n))$-Howe duality and its multiplicity-free decomposition.
The application of the Howe duality to irreducible characters over Lie superalgebras
follows the simpler approach in our work with Kwon \cite{CKW} (which uses Howe duality
for infinite-dimensional Lie algebras \cite{Wa}).

\subsection{}

We recall some truly super phenomena that have been the main obstacles towards a
better understanding of super representation theory:

\begin{enumerate}
\item There exist odd roots as well as non-conjugate Borel subalgebras for a Lie
    superalgebra. A homomorphism between Verma modules may not be injective.

\item The linkage in category $\mc O$ of modules for a Lie superalgebra is NOT
    controlled by the Weyl group of $\ev \ga$; see e.g.  $\gl(1|1)$.

\item There is no uniform Weyl-type irreducible finite-dimensional character formula
    for Lie superalgebras.

\item The super geometry behind super representation theory is still inadequately
    developed.
\end{enumerate}

In light of these super phenomena, it was a rather unexpected discovery \cite{CWZ, CW4},
which was partly inspired by Brundan \cite{Br},
that there exists a (conjectural) equivalence of categories between Lie algebras and Lie
superalgebras of type $A$ (at a certain suitable limit at infinity), which was termed
{\em Super Duality}. This conjecture in the full generality of \cite{CW4} has been proved in
\cite{CL}, which in particular offers an elementary and conceptual solution to the
character problem for all finite-dimensional simple modules and for a large class of
infinite-dimensional simple highest weight modules over Lie superalgebras of type $A$.

Super duality has been subsequently formulated and established between various Lie
superalgebras of type $\osp$ and the corresponding classical Lie algebras in our very
recent work with Lam \cite{CLW}. This in particular offers a conceptual solution of the
irreducible character problem for a wide class of modules, which include all
finite-dimensional irreducibles, of Lie superalgebras of type $\osp$ in terms of
Kazhdan-Lusztig polynomials for classical Lie algebras \cite{KL, BB, BK} (for more on
Kazhdan-Lusztig theory see Tanisaki's lectures \cite{Ta}). In addition, it follows easily
from the approach of \cite{CL, CLW} that the $\mf u$-homology groups (or Kazhdan-Lusztig
polynomials in the sense of Vogan \cite{Vo}) match perfectly between classical Lie
superalgebras and the corresponding classical Lie algebras.  This generalizes earlier
partial results in this direction from Schur or Howe duality approach \cite{CZ1, CK,
CKW}. The super duality as outlined above is explained in Section~\ref{sec:SD}.

Let us put the super duality work explained above in perspective.
{\em Finite-dimensional} irreducible characters for $\glmn$ have
been also obtained earlier in two totally different approaches by
\cite{Sva} and \cite{Br}. The mixed algebraic and geometric
approach of  Serganova has been extended very recently in
\cite{GS} to obtain all irreducible finite-dimensional
$\osp$-characters. Brundan and Stroppel \cite{BrS} also provided
another approach to the main results of \cite{Br} and
independently proved a special case of the super duality
conjecture in type $A$ as formulated in \cite{CWZ}. All these
approaches have brought new and different insights into super
representation theory. Our super duality approach has the
advantages of explaining the connection with classical Lie
algebras and their Kazhdan-Lusztig polynomials, covering
infinite-dimensional irreducible characters, and being extendable
to general Kac-Moody Lie superalgebras.

A list of symbols is added at the end of the paper to facilitate the reading.
\subsection{}

Let us end the Introduction with some remarks on the interrelations among the three
dualities.

The $(\gl(d), \gl(n))$-Howe duality is equivalent to Schur duality. It follows from the
Schur-Sergeev duality that the characters for irreducible polynomial $\glmn$-modules are
given by the so-called hook Schur functions. On the other hand, the irreducible character
formulas for Lie superalgebras of types $\gl$ or $\osp$ obtained from Howe duality can be
expressed in terms of infinite classical Weyl groups. The appearance of hook Schur
functions and infinite Weyl groups in these formulas are conceptually explained from the
viewpoint of super duality.

Super duality can be informally interpreted as a categorification of the standard
involution on the ring of symmetric functions. It is well known that the ring of
symmetric functions in infinitely many variables admits symmetries which are not observed
in finitely many variables. Super duality is formulated precisely at the infinite
rank limit. On the level of combinatorial parameterizations of highest weights, super
duality manifests itself through (variation of) the conjugate of partitions.


Partly due to the time constraint of the lectures, we have left out many interesting
topics on super representation theory. We refer to \cite{BL, dJ} (and more recently
\cite{SZ}) for finite-dimensional irreducible characters of atypicality one, to
\cite{BKN, DS, Ma, Mu, Pe, PS} for geometric approaches, to \cite{Br2, CWZ2} for further
development of the Fock space approach of Brundan for the queer Lie superalgebra $\mf{q}
(n)$ and for $\osp(2|2n)$, to \cite{CK, CKW, CZ1, Ger, San, Sva, Zou} for some
cohomological aspects, to \cite{BrK, SW, WZ} for prime characteristic, to \cite{dHKT, Su}
for related combinatorial structures; also see \cite{Gor, KW, Naz} for additional
work on Lie superalgebras.

\medskip

{\bf Acknowledgment.} This paper is a modified and expanded written account of the $8$
lectures given by the second author at the summer school of East China Normal University
(ECNU), Shanghai, July 2009. We are grateful to Ngau Lam for his collaboration and
insight. We thank Bin Shu at ECNU for hospitality and an enjoyable summer school.

\section{Lie superalgebra  ABC}
\label{sec:abc}

\subsection{}

A vector superspace $V$ is understood as a  $\Z_2$-graded vector space $V =\ev V \oplus
\od V$. An element $a \in  V_i$ has parity $|a|=i$, and an element in $\ev V$
(respectively, $\od V$) is called even (respectively, odd).

\begin{definition}
A Lie superalgebra is a vector superspace $\ga =\ev \ga \oplus\od \ga$ equipped with a
bilinear bracket operation $[.,.]$ satisfying $[\ga_i, \ga_j] \subset \ga_{i+j}$,
$i,j\in\Z_2$, and the following two axioms: for $\Z_2$-homogeneous $a, b, c \in \ga$,
\begin{enumerate}
\item (Skew-supersymmetry) $[a, b] =- (-1)^{|a|\cdot|b|} [b,a].$

\item (Super Jacobi identity) $[a,[b,c]] =[[a,b],c] +(-1)^{|a|\cdot|b|}[b,[a,c]].$
\end{enumerate}
\end{definition}

\begin{remark}
\begin{enumerate}
\item For a Lie superalgebra $\ga =\ev \ga \oplus\od \ga$, $\ev \ga$ is a Lie algebra
    and $\od \ga$ is a $\ev \ga$-module under the adjoint action.

\item(Sign Rule) As explained in Manin's book \cite{Ma}, there is a general heuristic
    sign rule for superalgebras as follows. {\em If in some formula for usual algebra
    there are monomials with interchanged terms, then in the corresponding formula for
    superalgebra every interchange of neighboring terms, say $a$ and $b$, is
    accompanied by the multiplication of the monomials by the factor
    $(-1)^{|a|\cdot|b|}$.} This is already manifest in the definition of Lie
    superalgebra and will persist throughout the paper.
\end{enumerate}
\end{remark}

\begin{example} \label{ex:super}
\begin{enumerate}
\item Let $A =\ev A \oplus \od A$ be an associative superalgebra
    (i.e.~$\Z_2$-graded). Then $(A, [.,.])$ is a Lie superalgebra, where for
    homogeneous elements $a,b \in A$, we define
$$
{[}a, b] =ab - (-1)^{|a|\cdot|b|} ba.
$$

\item A Lie superalgebra $\ga$ with $\od \ga =0$ is just a usual Lie algebra. A Lie
    superalgebra $\ga$ with purely odd part (i.e. $\ev \ga =0$) has to be {\em
    abelian}, i.e.~$[\ga,\ga]=0$.
\end{enumerate}
\end{example}

\subsection{Lie superalgebras of type $A$ and the supertrace}

Let $V =\ev V \oplus \od V$ be a vector superspace. Then $\End (V)$ is naturally an
associative superalgebra. The Lie superalgebra $\gl (V) :=(\End (V), [.,.])$ from
Example~\ref{ex:super}~(1) is called a {\em general linear Lie superalgebra}. If $\ev V
=\C^m$ and $\od V =\C^n$, we denote $V$ by $\C^{m|n}$, and $\gl (V)$ by $\gl (m|n)$. Note
that both $\gl(m|0) \cong \gl(0|m)$ are isomorphic to the usual Lie algebra $\gl(m)$.

The Lie superalgebra $\glmn$ consists of block matrices of size $m|n$:
\begin{equation} \label{matrix}
g =
\begin{pmatrix}
a & b\\
c & d
\end{pmatrix}.
\end{equation}
Throughout the paper, we choose to parameterize the rows and columns of the matrices by
the set
$$
I(m|n) =\{\ov 1, \ldots, \ov m; 1, \ldots, n\}
$$
with a total order
\begin{equation}  \label{eq:order}
\ov 1 < \ldots < \ov m  < 0 <  1 < \ldots < n
\end{equation}
(where $0$ is inserted
for later convenience). Its even subalgebra consists of matrices of the form
\begin{equation*}
\begin{pmatrix}
a & 0\\
0 & d
\end{pmatrix}
\end{equation*}
and is isomorphic to $\gl(m) \oplus \gl(n)$.

\begin{example}  \label{gl(1|1)}
For $\ga =\gl(1|1)$, let
\begin{equation*}
e= \begin{pmatrix}
0 & 1\\
0 & 0
\end{pmatrix},
 \qquad
f=
\begin{pmatrix}
0 & 0\\
1 & 0
\end{pmatrix},
 \qquad
h_1=
\begin{pmatrix}
1 & 0\\
0 & 0
\end{pmatrix},
 \qquad
h_2=
\begin{pmatrix}
0 & 0\\
0 & 1
\end{pmatrix}.
\end{equation*}
Then
\begin{equation*}
[e,f] =h_1 +h_2  \text{ (the identity matrix)}.
\end{equation*}
\end{example}

The {\em supertrace}, denoted by $\str$, of \eqref{matrix} is defined to be
$$
\str (g) = \tr(a) -\tr(d).
$$
The {\em special linear Lie superalgebra} is
$$
\sgl(m|n) =\{x \in \glmn \mid \str (x) =0\}.
$$
The definitions of supertrace and of the Lie superalgebra $\sgl$ are justified by the
following.

\begin{xca}
Show that $\sgl(m|n) =[\glmn, \glmn]$ and in particular $\sgl(m|n)$ is a Lie subalgebra
of $\glmn.$
\end{xca}
The notion of simple Lie superalgebras is defined in the same way as for Lie algebras.
We note  that $\sgl( n|n)$ is not a simple Lie superalgebra, as it contains a nontrivial
center $\C I_{2n}$.

\subsection{The bilinear form}

Let $\h$ denote the Cartan subalgebra of $\glmn$ consisting of all diagonal matrices.
Note that $\h$ is an even subalgebra of $\glmn$.

Let $E_{ij}$, for $i,j \in I(m|n)$, denote the standard basis for $\glmn$. We define a
bilinear form $(\cdot,\cdot)$ on $\ga$ by letting
$$
(a,b) =\str (ab), \quad a,b \in \ga.
$$
This restricts to a nondegenerate symmetric bilinear form on $\h$: for $i,j \in I(m|n)$,
\begin{equation*}
(E_{ii}, E_{jj}) =\left\{
\begin{array}{rl}
 1 & \text{ if } \ov 1  \le i=j \le \ov m,\\
 -1 & \text{ if } 1\le i=j \le n,\\
 0  & \text{ if }   i \neq j.
\end{array}
\right.
\end{equation*}
Denote by $\{\delta_i,\epsilon_j\}_{i,j}$ the basis of $\h^*$ dual to $\{E_{\ov i \,\ov
i}, E_{jj}\}_{i,j}$, where $1\le i \le m$ and $1\le j \le n$. Under the bilinear form
$(\cdot,\cdot)$, we have the identification $\delta_i =(E_{\ov i,\ov i}, \cdot)$ and
$\epsilon_j =-(E_{jj},\cdot)$. Whenever it is convenient we also use the notation
\begin{equation} \label{ep=delta}
\ep_{\ov i} :=\delta_i, \quad \text{ for }1\le i \le m.
\end{equation}

The form $(\cdot,\cdot)$ on $\h$ induces a non-degenerate bilinear from on $\h^*$, which
will be denoted by the same notation, as follows: for $i,j \in I(m|n)$,
\begin{equation} \label{form}
\begin{split}
(\ep_i, \ep_j) =\left\{
\begin{array}{rl}
 1 & \text{ if } \ov 1 \le i=j \le \ov m,\\
 -1 & \text{ if } 1\le i=j \le n,\\
 0  & \text{ if }   i \neq j.
\end{array}
\right.\end{split}
\end{equation}
\subsection{The root system}

For the Lie superalgebra $\glmn$, we define the root space decomposition, a root system
$\Phi$, a set $\Phi^+$ (respectively, $\Phi^-$) of positive (respectively, negative)
roots, a set $\Pi$ of simple roots (in $\Phi^+$), etc. As this can be done in the same
way as for semisimple Lie algebras or $\gl (m)$, we will merely write down the statements
for later use.

Now let us make the super phenomenon explicit. A root $\alpha$ is {\em even} if
$\ga_\alpha \subset \ev \ga$, and it is {\em odd} if $\ga_\alpha \subset \od \ga$. Denote
by $\ev \Phi$ (respectively, $\od \Phi$) the set of all even (respectively, odd) roots in
$\Phi$. Denote
\begin{align*}
\Phi^\pm_i=\Phi_i\cap\Phi^\pm,\quad \Pi_i =\Phi_i \cap \Pi,\quad i\in\Z_2.
\end{align*}

With respect to the Cartan subalgebra $\h$ the Lie superalgebra $\glmn$ admits a root
space decomposition:
$$
\ga = \h \oplus \bigoplus_{\alpha \in \Phi} \ga_\alpha,
$$
with a root system
$$
\Phi = \{ \ep_i -\ep_j \mid i, j \in I(m|n), i \ne j\}.
$$
The standard set of simple roots is taken to be $\Pi =\ev \Pi \cup \od \Pi$, where
$$
\ev \Pi =\{\ep_{\ov i} -\ep_{\overline{i+1}}\}_{1\le i \le {m-1}}
\cup \{\ep_i -\ep_{i+1}\}_{1\le i \le n-1}, \quad \od \Pi =
\{\ep_{\ov m} -\ep_1\},
$$
and the associated standard set of positive roots is
$$
\Phi^+ = \{\ep_i -\ep_j \mid i , j \in I(m|n), i<j\},
$$
where the odd roots are $\ep_i -\ep_j$ with indices $i<0<j$. Clearly, $\ga_{\ep_i -\ep_j}
= \C E_{ij}$.
It follows by \eqref{form} that
$$
(\delta_i -\ep_j, \delta_i -\ep_j) =0,
$$
for all the odd roots $\delta_i -\ep_j$, where $1\le i\le m, 1 \le j\le n$. An odd root
$\alpha$ with $(\alpha, \alpha) =0$ is called {\em isotropic}.
The standard Dynkin diagram is:
\begin{center}
\hskip -3cm \setlength{\unitlength}{0.16in}
\begin{picture}(24,3)
\put(8.4,2){\makebox(0,0)[c]{$\bigcirc$}} \put(12.85,2){\makebox(0,0)[c]{$\bigcirc$}}
\put(15.25,2){\makebox(0,0)[c]{$\bigotimes$}} \put(17.4,2){\makebox(0,0)[c]{$\bigcirc$}}
\put(21.9,2){\makebox(0,0)[c]{$\bigcirc$}} \put(8.82,2){\line(1,0){0.8}}
\put(11.2,2){\line(1,0){1.2}} \put(13.28,2){\line(1,0){1.45}}
\put(15.7,2){\line(1,0){1.25}} \put(17.8,2){\line(1,0){0.9}}
\put(20.1,2){\line(1,0){1.4}} \put(10.5,1.95){\makebox(0,0)[c]{$\cdots$}}
\put(19.6,1.95){\makebox(0,0)[c]{$\cdots$}}
\put(8.3,1){\makebox(0,0)[c]{\tiny $\delta_1 -\delta_2$}}
\put(12.8,1){\makebox(0,0)[c]{\tiny $\delta_{m-1} -\delta_{m}$}}
\put(15.4,3){\makebox(0,0)[c]{\tiny $\delta_{m}  -\ep_{1}$}}
\put(17.6,1){\makebox(0,0)[c]{\tiny $\ep_{1} -\ep_{2}$}}
\put(22,1){\makebox(0,0)[c]{\tiny $\ep_{n-1} -\ep_{n}$}}
\end{picture}
\end{center}
where we have used $\bigotimes$ to denote an isotropic odd simple root.

\begin{remark}
The notion of root systems and Dynkin diagrams makes sense for all the basic classical
Lie superalgebras, which consist of $\glmn, \mathfrak{sl}(m|n), \osp(m|2n)$ and three exceptional ones
(besides the simple Lie algebras).
\end{remark}

\subsection{Non-conjugate Borel subalgebras and $\ep\delta$-sequences}
\label{sec:ep delta}

Recall that the bilinear form on the real subspace $\h_\R^*$
spanned by the $\ep_i$'s is not positive definite (due to the
supertrace), and moreover, there exist isotropic odd roots.

Another distinguished feature of Lie superalgebras  is the existence of non-conjugate
Borel subalgebras or non-isomorphic Dynkin diagrams (under the Weyl group action).

\begin{lemma} \label{odd ref}
Let $\ga$ be a Lie superalgebra with triangular decomposition $\ga
=\n^- \oplus \h \oplus \n^+$, which corresponds to the root system
$\Phi =\Phi^+ \cup \Phi^-$. Let $\alpha$ be an odd isotropic
simple root. Let $\mf b =\h +\n^+$. Then, $\Phi(\alpha)^+ : =
(\Phi^+ \backslash \{\alpha\}) \cup \{-\alpha\}$ is a new
    system of positive roots, whose corresponding set of simple roots
    is
$$
\Pi(\alpha) =\{\beta \in \Pi \mid (\beta, \alpha) =0, \beta \neq
\alpha\} \cup \{\beta +\alpha \mid \beta \in \Pi, (\beta, \alpha)
\neq 0\} \cup \{-\alpha\}.
$$
\end{lemma}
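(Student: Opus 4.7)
The plan is to proceed in two stages: first verify that $\Phi(\alpha)^+$ is a genuine positive system for $\Phi$ (yielding a new triangular decomposition $\ga=\n(\alpha)^-\oplus\h\oplus\n(\alpha)^+$), then identify its set of simple roots and match against the claimed $\Pi(\alpha)$. For stage one, the disjoint-union property $\Phi=\Phi(\alpha)^+\sqcup(-\Phi(\alpha)^+)$ is automatic from the construction, since only $\alpha$ has been swapped with $-\alpha$. The substantive condition is closure under root addition. Given $\gamma_1,\gamma_2\in\Phi(\alpha)^+$ with $\gamma_1+\gamma_2\in\Phi$, I would split on whether $-\alpha$ appears. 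When both summands lie in $\Phi^+\setminus\{\alpha\}$, closure of $\Phi^+$ places the sum in $\Phi^+$, and it cannot equal $\alpha$ since $\alpha$ is a simple root and hence indecomposable in $\Phi^+$. When $\gamma_1=-\alpha$ and $\gamma_2\in\Phi^+\setminus\{\alpha\}$, the element $\gamma_2-\alpha\in\Phi$ cannot equal $\alpha$ (else $\gamma_2=2\alpha$, not a root because $\alpha$ is isotropic), cannot equal $-\alpha$ (else $\gamma_2=0$), and cannot lie in $-\Phi^+$ (else $\alpha=\gamma_2+(\alpha-\gamma_2)$ decomposes into two positive roots, again contradicting simplicity of $\alpha$), so it lies in $\Phi^+\setminus\{\alpha\}\subset\Phi(\alpha)^+$.

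For stage two I would check three properties of the proposed $\Pi(\alpha)$: membership in $\Phi(\alpha)^+$, indecomposability within $\Phi(\alpha)^+$, and the integral spanning condition. Membership is routine; the only point requiring justification is that $\beta+\alpha\in\Phi$ whenever $\beta\in\Pi$ satisfies $(\beta,\alpha)\neq 0$, which reflects the Dynkin-diagram combinatorics of the basic classical Lie superalgebras (the bracket $[\ga_\beta,\ga_\alpha]$ is nonzero). For indecomposability, $-\alpha$ clearly admits no nontrivial decomposition; a $\beta\in\Pi$ with $(\beta,\alpha)=0$ cannot decompose in $\Phi(\alpha)^+$ since a purely $\Phi^+$-decomposition is excluded by simplicity of $\beta$, while one using $-\alpha$ would force $\beta+\alpha\in\Phi$ and thus $(\beta,\alpha)\neq 0$; finally a $\beta+\alpha$ with $(\beta,\alpha)\neq 0$ has a unique $\Phi^+$-decomposition $\{\beta,\alpha\}$, now forbidden as $\alpha\notin\Phi(\alpha)^+$, while a decomposition involving $-\alpha$ would demand $\beta+2\alpha\in\Phi$, ruled out by isotropy.

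The main obstacle is the integral spanning condition, i.e.\ that every $\gamma\in\Phi(\alpha)^+$ is a nonnegative integer combination of $\Pi(\alpha)$. I would expand $\gamma\in\Phi^+\setminus\{\alpha\}$ in the old simple roots as $\gamma=c_\alpha\alpha+\sum_i c_i\beta_i$ with $c_\alpha,c_i\in\Z_{\geq 0}$, and then substitute $\beta_i=(\beta_i+\alpha)+(-\alpha)$ for those $i$ with $(\beta_i,\alpha)\neq 0$; the resulting $\Pi(\alpha)$-expansion has coefficient of $-\alpha$ equal to $\bigl(\sum_{i:(\beta_i,\alpha)\neq 0}c_i\bigr)-c_\alpha$. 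The crux is showing this is nonnegative, which amounts to the structural combinatorial fact that the multiplicity of the isotropic odd simple root $\alpha$ in any positive root is bounded above by the total multiplicity of the simple roots non-orthogonal to $\alpha$ in its $\Pi$-expansion. For each basic classical type this is verified directly from the explicit root system; for instance, in $\glmn$ the multiplicity of the odd simple root $\alpha=\delta_m-\ep_1$ in any positive root is at most $1$, and whenever it equals $1$ some non-orthogonal simple root must appear, giving the inequality with room to spare.
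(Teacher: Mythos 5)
The paper offers no actual argument for this lemma — the proof reads, in full, ``Follows from a straightforward verification'' — so your detailed outline is precisely the kind of content the authors are deferring, and your overall two-stage structure (closure of $\Phi(\alpha)^+$, then identification of its indecomposables) is the right way to carry that verification out. Stages 1 and 2(b) are sound, and correctly exploit isotropy of $\alpha$ to rule out $2\alpha$ and hence $\gamma_2-\alpha=\pm\alpha$ in the closure check.

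Two points deserve comment. First, the spanning condition. Your ``crux inequality'' $c_\alpha\leq\sum_{i:(\beta_i,\alpha)\neq 0}c_i$ is true, but the $\glmn$ illustration you cite (``multiplicity of $\alpha$ in any positive root is at most $1$'') genuinely fails to capture the worst case: in $\spo(2n|2m+1)$ with $\alpha=\delta_n-\ep_1$, the positive root $2\delta_n=2(\delta_n-\ep_1)+2(\ep_1-\ep_2)+\cdots+2\ep_m$ has $c_\alpha=2$, and the inequality only holds with equality ($c_{\ep_1-\ep_2}=2$, $c_{\delta_{n-1}-\delta_n}=0$). Someone verifying your argument for $\osp$ would have to redo that bound from scratch. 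Second, you can avoid much of the case-by-case dependence. The claim $(\beta,\alpha)\neq 0\Rightarrow\beta+\alpha\in\Phi$ (needed in 2(a), and, in the contrapositive, in 2(b)) has a uniform proof: with $h_\alpha=[e_\alpha,f_\alpha]$, invariance of the form gives $\beta(h_\alpha)\propto(\beta,\alpha)\neq 0$, and since $\beta-\alpha\notin\Phi$ for distinct simple roots, $[[e_\beta,e_\alpha],f_\alpha]=\beta(h_\alpha)e_\beta\neq 0$, forcing $[e_\beta,e_\alpha]\neq 0$. Similarly $\ga_{\beta+2\alpha}=0$ because any iterated bracket of simple root vectors landing there must involve $(\ad e_\alpha)^2 e_\beta$ or $[e_\beta,[e_\alpha,e_\alpha]]$, both zero by isotropy. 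Moreover, once $\Phi(\alpha)^+$ is known to be a positive system, a generic-functional height induction already yields that every element of $\Phi(\alpha)^+$ is a $\Z_{\geq 0}$-combination of the indecomposables; combined with the fact that $\Pi\to\Pi(\alpha)$ is a unimodular change of basis (so $\Pi(\alpha)$ has the right cardinality and spans the root lattice), this is an alternative path to 2(c) that sidesteps the coefficient inequality entirely, at the cost of showing that no $\gamma\notin\Pi(\alpha)$ is indecomposable. Either route works; yours has the merit of being elementary but leans more heavily on explicit root-system combinatorics than your sketch acknowledges.
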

The new Borel subalgebra corresponding to $\Pi(\alpha)$ will be denoted by $\mf
b(\alpha)$.
\begin{proof}
Follows from a straightforward verification.
\end{proof}
The process of obtaining $\Pi(\alpha)$ from $\Pi$ above will be
referred to as an {\em odd reflection}, and will be denoted by
$r_\alpha$, in accordance with the usual notion of real
reflections.

\begin{example}  \label{gl12}
Associated to $\gl(1|2)$, we have $\ev \Phi = \{ \pm (\ep_1
-\ep_2)\}$, and $\od \Phi =\{ \pm (\delta_1 -\ep_1), \pm (\delta_1
-\ep_2)\}.$ There are $6$ sets of simple roots, that are related
by the real and odd reflections as follows. There are three
conjugacy classes of Borel subalgebras, and each vertical pair
corresponds to such a conjugacy class.

\vspace{.3cm}

\begin{center}
\hskip -1cm \setlength{\unitlength}{0.16in}
\begin{picture}(22,7)
\put(1.6,6){\makebox(0,0)[c]{$\bigotimes$}} \put(4,6){\makebox(0,0)[c]{$\bigcirc$}}
\put(2,6){\line(1,0){1.5}} \put(1.6,5){\makebox(0,0)[c]{\tiny $\delta_1-\epsilon_1$}}
\put(4,5){\makebox(0,0)[c]{\tiny $\epsilon_1-\epsilon_2$}}
\put(7,6.5){\makebox(0,0)[c]{$\stackrel{r_{\delta_1-\epsilon_1}}{\rightleftarrows}$}}
\put(10.6,6){\makebox(0,0)[c]{$\bigotimes$}} \put(13,6){\makebox(0,0)[c]{$\bigotimes$}}
\put(11,6){\line(1,0){1.5}} \put(10.6,5){\makebox(0,0)[c]{\tiny $\epsilon_1-\delta_1$}}
\put(13,5){\makebox(0,0)[c]{\tiny $\delta_1-\epsilon_2$}}
\put(17,6.5){\makebox(0,0)[c]{$\stackrel{r_{\delta_1-\epsilon_2}}{\rightleftarrows}$}}
\put(19.6,6){\makebox(0,0)[c]{$\bigcirc$}} \put(22,6){\makebox(0,0)[c]{$\bigotimes$}}
\put(20,6){\line(1,0){1.5}} \put(19.6,5){\makebox(0,0)[c]{\tiny $\epsilon_1-\epsilon_2$}}
\put(22,5){\makebox(0,0)[c]{\tiny $\epsilon_2-\delta_1$}}
\put(1.6,2){\makebox(0,0)[c]{$\bigotimes$}} \put(4,2){\makebox(0,0)[c]{$\bigcirc$}}
\put(2,2){\line(1,0){1.5}} \put(1.6,1){\makebox(0,0)[c]{\tiny $\delta_1-\epsilon_2$}}
\put(4,1){\makebox(0,0)[c]{\tiny $\epsilon_2-\epsilon_1$}}
\put(7,2.5){\makebox(0,0)[c]{$\stackrel{r_{\delta_1-\epsilon_2}}{\rightleftarrows}$}}
\put(10.6,2){\makebox(0,0)[c]{$\bigotimes$}} \put(13,2){\makebox(0,0)[c]{$\bigotimes$}}
\put(11,2){\line(1,0){1.5}} \put(10.6,1){\makebox(0,0)[c]{\tiny $\epsilon_2-\delta_1$}}
\put(13,1){\makebox(0,0)[c]{\tiny $\delta_1-\epsilon_1$}}
\put(17,2.5){\makebox(0,0)[c]{$\stackrel{r_{\delta_1-\epsilon_1}}{\rightleftarrows}$}}
\put(19.6,2){\makebox(0,0)[c]{$\bigcirc$}} \put(22,2){\makebox(0,0)[c]{$\bigotimes$}}
\put(20,2){\line(1,0){1.5}} \put(19.6,1){\makebox(0,0)[c]{\tiny $\epsilon_2-\epsilon_1$}}
\put(22,1){\makebox(0,0)[c]{\tiny $\epsilon_1-\delta_1$}}
\put(3,3.5){\makebox(0,0)[c]{$\updownarrow$}}
\put(4.3,3.5){\makebox(0,0)[c]{\tiny{$r_{\epsilon_1-\epsilon_2}$}}}
\put(12,3.5){\makebox(0,0)[c]{$\updownarrow$}}
\put(13.3,3.5){\makebox(0,0)[c]{\tiny{$r_{\epsilon_1-\epsilon_2}$}}}
\put(21,3.5){\makebox(0,0)[c]{$\updownarrow$}}
\put(22.3,3.5){\makebox(0,0)[c]{\tiny{$r_{\epsilon_1-\epsilon_2}$}}}
\end{picture}
\end{center}
\end{example}
One convenient way to parameterize the conjugacy classes of Borel subalgebras of $\glmn$
is via the notion of $\ep\delta$-sequences. Keeping \eqref{ep=delta} in mind,
we list the simple roots associated to a given
Borel subalgebra $\mf b$ in order as $\ep_{i_1} -\ep_{i_2}, \ep_{i_2} -\ep_{i_3}, \ldots,
\ep_{i_{m+n-1}} -\ep_{i_{m+n}}$, where $\{i_1, i_2, \ldots, i_{m+n}\} =I(m|n)$. Switching
the ordered sequence $\ep_{i_1}\ep_{i_2}\ldots \ep_{i_{m+n}}$  to the
$\ep\delta$-notation by \eqref{ep=delta} and then dropping the indices give us the {\em
$\ep\delta$-sequence} associated to $\mf b$. Note that the total number of $\delta$'s
(respectively, $\ep$'s) is $m$ (respectively, $n$).

For example, the three conjugacy classes of Borels for $\gl(1|2)$ above correspond to the
three sequences $\delta \ep\ep$, $\ep\delta\ep$, $\ep\ep\delta$, respectively. In more
detail, the first sequence $\delta \ep\ep$ is obtained by removing the indices of
$\delta_1 \ep_1  \ep_2$ (read off from the upper-left diagram above) or $\delta_1 \ep_2
\ep_1$ (from the lower-left diagram above). Also the standard Borel of $\glmn$
corresponds to the sequence $\underbrace{\delta\cdots \delta}_m \underbrace{\ep \cdots
\ep}_n$ while the opposite Borel to the standard one corresponds to $\underbrace{\ep
\cdots \ep}_n \underbrace{\delta \cdots \delta}_m$.

\begin{xca} Let $\Phi$ be the roots of $\glmn$ with respect to the Cartan subalgebra
$\h$.  Prove that the sets of simple roots in $\Phi$ are in one-to-one correspondence
with the $\ep\delta$-sequences with $m$ $\delta$'s and $n$ $\ep$'s in total.  In
particular, there are $\binom{m+n}{m}$ conjugacy classes of Borel subalgebras.
\end{xca}

\subsection{}
Let $B$ be a non-degenerate even supersymmetric bilinear form on a vector superspace $V
=\ev V \oplus \od V$. Here $B$ is {\em even} if $B(V_i, V_j) =0$ unless $i=j \in \Z_2$,
and $B$ is {\em supersymmetric} if $B|_{\ev V \times \ev V}$ is symmetric while $B|_{\od
V \times \od V}$ is skew-symmetric (and hence $\dim \od V$ is necessarily even).

For $s\in \Z_2$, let
\begin{eqnarray*}
\osp (V)_s &=&\{g \in \gl (V)_s \mid B(g(x), y) =-(-1)^{s\cdot
|x|}
B(x, g(y)), \forall x, y \in V \},  \\
\osp(V) &=& \ev \osp(V) \oplus \od \osp(V).
\end{eqnarray*}
One checks that $\osp(V)$ is a Lie superalgebra, whose even subalgebra is isomorphic to
$\mf{so} (\ev V) \oplus \mf{sp} (\od V).$ When $V = \C^{\ell|2n}$, we write $\osp (V)
=\osp (\ell |2n)$.

\begin{remark}
One can also define the Lie superalgebra $\spo(V)$ as the subalgebra of $\gl(V)$ which
preserves a non-degenerate skew-supersymmetric bilinear form on $V$ (here $\dim \ev V$
has to be even). When $V = \C^{2n|\ell}$, we write $\spo (V) =\spo (2n|\ell)$.
\end{remark}

\begin{xca}
Show that Lie superalgebras $\osp (\ell |2n)$ and $\spo (2n|\ell)$ are isomorphic.
\end{xca}

\subsection{}
Define the {\em super transpose} $\text{st}$ as follows: for a matrix in the block form
\eqref{matrix}, we let
\begin{equation*}
\begin{pmatrix}
a & b\\
c & d
\end{pmatrix}^{\text{st}}
 =
\begin{pmatrix}
a^t & c^t\\
-b^t & d^t
\end{pmatrix},
\end{equation*}
where $x^t$ denotes the usual transpose of the matrix $x$.

Define the $(2n+2m+1) \times (2n+2m+1)$ matrix in the $(n|n|m|m|1)$-block form
\begin{eqnarray} \label{symform}
\mathfrak J_{2n|2m+1} :=
    \begin{pmatrix}
        0  &  I_n  & 0   & 0  & 0  \\
     -I_n  &  0    & 0   & 0  & 0  \\
        0  &  0    & 0   &I_m & 0  \\
        0  &  0    & I_m & 0  & 0  \\
        0  &  0    & 0   & 0  & 1
    \end{pmatrix},
\end{eqnarray}
where $I_n$ is the $n \times n$ identity matrix. Let $\mathfrak J_{2n|2m}$ denote the
$(2n+2m) \times (2n+2m)$ matrix obtained from $\mathfrak J_{2n|2m+1}$ by deleting the
last row and column. Then, for $\ell =2m$ or $2m+1$, by definition $ \spo(2n|\ell)$ is the subalgebra of
$\gl(2n|\ell)$ which preserves the bilinear form on $\C^{2n|\ell}$ with matrix $\mathfrak
J_{2n|\ell}$ relative to the standard basis of $\C^{2n|\ell}$, and hence
$$ \spo(2n|\ell) = \{g \in \gl (2n|\ell) \mid g^{st} \mathfrak
J_{2n|\ell} + \mathfrak J_{2n|\ell} \, g =0 \}.
$$
By a direct computation, $\spo(2n|2m+1)$ consists of the $(2n+2m+1) \times (2n+2m+1)$
matrices of the following $(n|n|m|m|1)$-block form
\begin{eqnarray} \label{matrixSPO}
    \begin{pmatrix}
    d      &   e     & y_1^t  & x_1^t &  z_1^t       \\
    f      &  -d^t   & -y^t   & - x^t &  -z^t  \\
    x      & x_1     &  a     &  b    &  -v^t  \\
    y      & y_1     &  c     & -a^t  &  -u^t   \\
    z      & z_1    & u      &  v    &    0
    \end{pmatrix},
\quad b, c \text{ skew-symmetric}, e, f \text{
    symmetric}.
\end{eqnarray}
The Lie superalgebra $\spo(2n|2m)$ consists of matrices
\eqref{matrixSPO} with the last row and column removed. Here and
below, the rows and columns of the matrices $\mathfrak
J_{2n|\ell}$ and (\ref{matrixSPO}) (or its modification) are
indexed by the finite set $I(2n|\ell)$.

The standard Dynkin diagrams of $\spo(2n|2m+1)$  and $\spo(2n|2m)$ are given respectively
as follows:

\begin{center}
\hskip -3cm \setlength{\unitlength}{0.16in}
\begin{picture}(24,3)
\put(5.7,2){\makebox(0,0)[c]{$\bigcirc$}} \put(8,2){\makebox(0,0)[c]{$\bigcirc$}}
\put(10.4,2){\makebox(0,0)[c]{$\cdots$}} \put(12.5,1.95){\makebox(0,0)[c]{$\bigotimes$}}
\put(14.85,2){\makebox(0,0)[c]{$\bigcirc$}} \put(17.25,2){\makebox(0,0)[c]{$\cdots$}}
\put(19.4,2){\makebox(0,0)[c]{$\bigcirc$}} \put(21.4,2){\makebox(0,0)[c]{$\bigcirc$}}
\put(6,2){\line(1,0){1.55}} \put(8.4,2){\line(1,0){1}} \put(11,2){\line(1,0){1}}
\put(13.1,2){\line(1,0){1.2}} \put(15.28,2){\line(1,0){1}} \put(17.7,2){\line(1,0){1.1}}
\put(19.7,1.75){$\Longrightarrow$} \put(5.2,1){\makebox(0,0)[c]{\tiny
$\delta_1-\delta_{2}$}} \put(8.2,1){\makebox(0,0)[c]{\tiny $\delta_{2}-\delta_{3}$}}
\put(12.1,1){\makebox(0,0)[c]{\tiny $\delta_n-\epsilon_1$}}
\put(14.8,1){\makebox(0,0)[c]{\tiny $\epsilon_1-\epsilon_2$}}
\put(19,1){\makebox(0,0)[c]{\tiny $\epsilon_{m-1}-\epsilon_m$}}
\put(21.5,1){\makebox(0,0)[c]{\tiny $\epsilon_{m}$}}
\end{picture}
\end{center}
\begin{center}
\hskip -3cm \setlength{\unitlength}{0.16in}
\begin{picture}(24,4)
\put(5.7,2){\makebox(0,0)[c]{$\bigcirc$}} \put(8,2){\makebox(0,0)[c]{$\bigcirc$}}
\put(10.4,2){\makebox(0,0)[c]{$\cdots$}} \put(12.5,1.95){\makebox(0,0)[c]{$\bigotimes$}}
\put(14.85,2){\makebox(0,0)[c]{$\bigcirc$}} \put(17.25,2){\makebox(0,0)[c]{$\cdots$}}
\put(19.3,2){\makebox(0,0)[c]{$\bigcirc$}} \put(21.6,3.5){\makebox(0,0)[c]{$\bigcirc$}}
\put(21.6,0.5){\makebox(0,0)[c]{$\bigcirc$}} \put(6,2){\line(1,0){1.55}}
\put(8.4,2){\line(1,0){1}} \put(11,2){\line(1,0){1}} \put(13.1,2){\line(1,0){1.2}}
\put(15.28,2){\line(1,0){1}} \put(17.7,2){\line(1,0){1.1}} \put(19.7,2){\line(1,1){1.5}}
\put(19.7,2){\line(1,-1){1.5}} \put(5.2,1){\makebox(0,0)[c]{\tiny $\delta_1-\delta_{2}$}}
\put(8.2,1){\makebox(0,0)[c]{\tiny $\delta_{2}-\delta_{3}$}}
\put(12.1,1){\makebox(0,0)[c]{\tiny $\delta_n-\epsilon_1$}}
\put(14.8,1){\makebox(0,0)[c]{\tiny $\epsilon_1-\epsilon_2$}}
\put(18.5,1){\makebox(0,0)[c]{\tiny $\epsilon_{m-2}-\epsilon_{m-1}$}}
\put(23.9,3.5){\makebox(0,0)[c]{\tiny $\epsilon_{m-1}-\epsilon_{m}$}}
\put(23.9,0.5){\makebox(0,0)[c]{\tiny $\epsilon_{m-1}+\epsilon_{m}$}}
\end{picture}
\end{center}

\begin{example}
The Lie superalgebra $\ga =\osp(1|2)$ has even subalgebra $\ev \ga \cong \sgl(2)
=\C\langle e, h, f\rangle$ and $\od \ga$ isomorphic to the $2$-dimensional natural
$\sgl(2)$-module $\C \langle E, F \rangle$. Moreoever, $[E,E] =2 e, [F,F] =-2 f, [E, F]
=h$.

The simple root consists of a (unique) odd non-isotropic root $\delta$, twice of which is
an even root. The Dynkin diagram of $\osp(1|2)$ is denoted by
\makebox(8,6){\small\CircleSolid} (in order to distinguish from an odd simple isotropic
root $\bigotimes$).
\end{example}

\begin{xca}\label{xca:osp1} Prove the following identities in $U(\osp(1|2))$ ($n\in\Z_+$):
\begin{itemize}
\item[(i)] $[E,F^{2n}]=-nF^{2n-1}$,
\item[(ii)] $[E,F^{2n+1}]=F^{2n}(h-n)$.
\end{itemize}
\end{xca}

\begin{xca}\label{xca:osp2}
Use Exercise \ref{xca:osp1} to show that the finite-dimensional irreducible
representations of $\osp(1|2)$ are parameterized by the set of highest weights
$\{n\delta|\n\in\Z_+\}$. Denoting the irreducible module corresponding to $n\delta$ by
$L(n\delta)$, show that
\begin{align*}
\text{ch}L(n\delta)=\frac{e^{(n+\hf)\delta}-e^{-(n+\hf)\delta}}{e^{\hf\delta}-e^{-\hf\delta}}.
\end{align*}
\end{xca}

\begin{xca} Consider the element
\begin{align*}
\Omega:=2h^2+2h+FE+4fe\in U(\osp(1|2)).
\end{align*}
Use Exercise \ref{xca:osp1} to prove that $[F,\Omega]=[E,\Omega]=0$, and hence $\Omega$ is
in the center of $U(\osp(1|2))$. Conclude from Exercise \ref{xca:osp2} that $\Omega$ acts
as different scalars on different finite-dimensional irreducible representations and
hence that every finite-dimensional $\osp(1|2)$-module is completely reducible.
\end{xca}

\section{Finite-dimensional modules of Lie superalgebras}
\label{sec:typical}

\subsection{PBW theorem}

The {\em universal enveloping algebra} $U(\ga)$ of a Lie superalgebra $\ga=\ev \ga \oplus
\od \ga$ is an associative superalgebra characterized by a universal property exactly as for
Lie algebras.

Assume that $\{x_1,\ldots, x_r\}$ is a basis for $\ev \ga$ and $\{y_1,\ldots, y_s\}$ is a
basis for $\od \ga$. Then the universal enveloping algebra $U(\ga)$ admits a PBW basis
\begin{equation} \label{pbw}
x_1^{a_1} \cdots x_r^{a_r}  y_1^{b_1} \cdots y_s^{b_s},\qquad
\forall a_i \in \Z_{\ge 0}, \forall b_j \in \{0,1\}.
\end{equation}
Alternatively, if we define the standard PBW filtration $\{F^d U(\ga)\}$ on $U(\ga)$ by
letting $F^d U(\ga)$ be the span of elements \eqref{pbw} with $\Sigma_i a_i +\Sigma_j b_j
\le d$, then we have the following isomorphism for the associated graded of $U(\ga)$ in
terms of the symmetric algebra $S(\ev \ga)$ and exterior algebra $\wedge (\od \ga)$:

$$
\gr^F U(\ga) \cong S(\ev \ga) \otimes \wedge (\od \ga).
$$

\begin{example}
 Let $\ga =\od \ga$ be a purely odd Lie superalgebra. Then its
universal enveloping algebra is isomorphic to the exterior algebra $\wedge (\od
\ga)$.
\end{example}

\subsection{Representations of $\gl(1|1)$} \label{sec:1|1}

Recall the basis $\{e, h_1, h_2, f\}$ for $\ga =\gl(1|1)$ from
Example~\ref{gl(1|1)}. Note that the even subalgebra
$\ev{\gl(1|1)} =\gl(1) \oplus \gl(1)$ coincides with the Cartan
subalgebra $\h$, and the Weyl group is trivial in this case.

Given $\la =(\la_1, \la_2) \in \C^2$, we denote by $\C_\la :=\C \, v_\la$ the
one-dimensional $\h$-module by letting $h_i v_\la = \la_i v_\la$, for $i=1,2$. Regarding
$\C_\la$ as a module over the Borel subalgebra $\h +\C e$, on which $e$ acts trivially,
we define the Verma modules (which coincide with Kac modules defined below for
$\gl(1|1)$) over $\ga$:
$$
K(\la) =U(\ga) \otimes_{U(\h +\C e)} \C_\la.
$$
By the PBW theorem $K(\la)$ can be identified with the following two-dimensional space
(where by abuse of notation $1 \otimes v_\la$ is denoted by $v_\la$):
$$
K(\la)  = \C \langle v_\la, f v_\la \rangle.
$$
The $\ga$-module $K(\la)$ has a unique simple quotient, denoted by $L(\la)$.

The $\ga$-module $K(\la)$ is simple if and only if $\la_1 \neq
-\la_2$. This follows from
$$
ef v_\la =(h_1 +h_2) v_\la -f e v_\la = (\la_1 +\la_2) v_\la.
$$

If $\la_1 \neq -\la_2$, then $K(\la_1, \la_2)$ is the unique
simple object in its block (in the category of finite-dimensional
$\gl(1|1)$-modules). This block is semisimple.

For $a \in \C$, we have a non-split short exact sequence of $\ga$-modules:
$$
0 \to L(a-1, 1-a) \to K(a, -a) \to L(a, -a) \to 0.
$$
The block containing $L(a, -a)$ is not semisimple, and it contains
infinitely many simple objects $L(b,-b)$ for $b \in a +\Z$ (the
underlying algebra can be described in terms of the
$A_\infty$-quiver).

\subsection{Finite dimensional simple $\glmn$-modules}

Let $\ga =\glmn$ in this subsection.

Let $\n^+$ (and respectively, $\n^-$) be the subalgebra of strictly upper (and
respectively, lower) triangular matrices of $\glmn$ with respect to the standard basis.
Then we have the triangular decomposition
$$
\ga =\n^- \oplus \h \oplus \n^+.
$$
The even subalgebra admits a compatible triangular decomposition
$$
\ev \ga =\ev \n^- \oplus \h \oplus \ev \n^+,
$$
where $\ev \n^\pm =\ev \ga \cap \n^\pm$.

Moreover, the Lie superalgebra $\ga$ admits a $\Z$-grading
$$
\ga =\ga_{-1} \oplus \ev \ga \oplus \ga_1,
$$
where $\ga_{-1}$ (respectively, $\ga_1$) is spanned by all $E_{ij}$ with $i>0>j$
(respectively, $i<0<j$).  Note that this $\Z$-grading is compatible with the
$\Z_2$-grading, i.e., the degree zero subspace coincides with the $\Z_2$-degree zero
subspace. This is equivalent to the fact that $\ev \ga$ is a Levi subalgebra of $\ga$
(corresponding to the removal of the odd simple root from the standard Dynkin diagram of
$\ga$).

For $\la \in \h^*$, let $L(\la)$ (respectively, $L^0(\la)$) be the simple module of $\ga$
(respectively, $\ev \ga$) of highest weight $\la$. Define the {\em Kac module} over $\ga$
by
$$
K(\la) = U(\ga) \otimes_{U(\ev \ga \oplus \ga_1)} L^0(\la),
$$
which can be identified by the PBW theorem with
\begin{equation} \label{Kmodule}
K(\la) =\wedge (\ga_{-1}) \otimes L^0(\la).
\end{equation}

\begin{proposition}
There exists a surjective $\ga$-module homomorphism (unique up to a scalar multiple)
$K(\la) \twoheadrightarrow L(\la)$. Moreover, the following are equivalent:
\begin{enumerate}
\item $L(\la)$ is finite-dimensional.

\item $L^0(\la)$ is finite-dimensional.

\item $K(\la)$ is finite-dimensional.
\end{enumerate}
\end{proposition}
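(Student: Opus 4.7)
The plan is to first construct the surjection $K(\la) \twoheadrightarrow L(\la)$ from the universal property of highest weight modules, and then establish the three-way equivalence by going around the cycle $(3) \Rightarrow (1)$, $(2) \Rightarrow (3)$, $(1) \Rightarrow (2)$.

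For the surjection, I would let $v^0_\la \in L^0(\la)$ be a highest weight vector and consider $w_\la := 1 \otimes v^0_\la \in K(\la)$. The vector $w_\la$ has $\h$-weight $\la$; it is annihilated by $\ev \n^+$ because $v^0_\la$ is, and it is annihilated by $\ga_1$ by construction of the induced module (since $\ga_1 \subset \ev \ga \oplus \ga_1$ acts trivially on $L^0(\la)$ in the defining representation used for induction). Since $\n^+ = \ev \n^+ \oplus \ga_1$, this shows $w_\la$ is a highest weight vector for $\ga$ of weight $\la$ generating $K(\la)$. The standard argument then produces the unique (up to scalar) surjection $K(\la) \twoheadrightarrow L(\la)$ onto the unique simple quotient.

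For $(3) \Rightarrow (1)$ the quotient $L(\la)$ of a finite-dimensional module is finite-dimensional. For $(2) \Rightarrow (3)$, use the PBW identification \eqref{Kmodule}: since $\dim \ga_{-1} = mn < \infty$, the exterior algebra $\wedge(\ga_{-1})$ is finite-dimensional, so $K(\la) = \wedge(\ga_{-1}) \otimes L^0(\la)$ is finite-dimensional whenever $L^0(\la)$ is.

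The one implication requiring a genuine (but short) argument is $(1) \Rightarrow (2)$. Let $v_\la \in L(\la)$ be a highest weight vector, and set $M := U(\ev \ga) v_\la \subset L(\la)$. Since $v_\la$ has weight $\la$ and is annihilated by $\ev \n^+ \subset \n^+$, the submodule $M$ is a highest weight $\ev \ga$-module of highest weight $\la$, hence admits $L^0(\la)$ as its unique simple quotient. If $L(\la)$ is finite-dimensional then so is its subspace $M$, and therefore so is its quotient $L^0(\la)$. This closes the cycle. The only step that even risks subtlety is checking that $\ga_1$ acts trivially on the generator of $K(\la)$, which is immediate from the induction construction; nothing in the proof requires the classical characterization of dominant integral weights for $\gl(m)\oplus \gl(n)$, although that characterization follows as a corollary via $(2)$.
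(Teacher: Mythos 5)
Your proof is correct and follows essentially the same route as the paper: construct the surjection from the induced module's highest weight vector, then close the cycle $(1)\Rightarrow(2)\Rightarrow(3)\Rightarrow(1)$ using the identification $K(\la)\cong\wedge(\ga_{-1})\otimes L^0(\la)$. The one minor stylistic difference is in $(1)\Rightarrow(2)$: the paper states briefly that $L^0(\la)$ is a quotient of $L(\la)$ as an $\ev\ga$-module (implicitly via projection onto the top degree of the $\Z$-grading inherited from $\ga=\ga_{-1}\oplus\ev\ga\oplus\ga_1$), whereas you exhibit $L^0(\la)$ as a quotient of the $\ev\ga$-submodule $U(\ev\ga)v_\la\subset L(\la)$; both are valid one-line arguments exploiting the same structural fact that $\ev\ga$ is a Levi and $\ga_{\pm 1}$ are finite-dimensional.
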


\begin{proof}
The surjective homomorphism follows from the fact that $K(\la)$ is a highest weight
$\ga$-module of highest weight $\la$.

$(1) \Rightarrow (2).$ Note that $L^0(\la)$ is a quotient of $L(\la)$ regarded as $\ev
\ga$-module.

$(2) \Rightarrow (3).$ Follows from \eqref{Kmodule}.

$(3) \Rightarrow (1).$ Follows from the surjective map $K(\la) \twoheadrightarrow
L(\la)$.
\end{proof}
Arguing as for Lie algebras, every finite-dimensional simple $\ga$-module is a highest
weight module $L(\la)$ for some $\la$, and moreover, $L(\la) \not \cong L(\mu)$
if $\la \neq \mu$. Hence, the above proposition gives a classification of
finite-dimensional simple $\ga$-modules.

\subsection{Typical irreducible characters}

Let $\ga =\glmn$ in this subsection.

We will denote by $P^+$ the set of weights $\la \in \h^*$ such that $L^0(\la)$ is
finite-dimensional. Recall that the character of $L^0(\la)$ for $\la \in P^+$ is given by
Weyl's formula
$$
\ch L^0(\la) =\frac{\sum_{\sigma \in W} \sgn (\sigma) e^{\sigma
(\la +\rho_0) -\rho_0}}{\prod_{\alpha \in \ev \Phi^+} (1- e^{-\alpha})}.
$$
Here $W$ ($\cong {\mathfrak S}_m \times {\mathfrak S}_n$) denotes the Weyl group of $\ev
\ga$, $\sgn$ denotes the sign representation of $W$, and $\rho_0 =\hf \sum_{\alpha \in
\ev \Phi^+} \alpha$.

Also denote by
$$
\rho_1 =\hf \sum_{\alpha \in \od \Phi^+} \alpha, \qquad \rho
=\rho_0 -\rho_1.
$$

\begin{lemma}\label{lem:000}
We have
\begin{enumerate}
\item $(\rho, \beta) =\hf (\beta, \beta), \forall \beta \in \Pi.$

\item $\sigma (\rho_1) =\rho_1, \forall \sigma \in W.$
\end{enumerate}
\end{lemma}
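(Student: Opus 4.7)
The plan is to prove both parts by writing down explicit formulas for $\rho_0$ and $\rho_1$ and then computing directly, using the structure of $W \cong \mf S_m \times \mf S_n$. Enumerating the positive roots, I would record
\begin{align*}
\rho_0 &= \tfrac12 \sum_{1 \le i < j \le m}(\ep_{\ov i} - \ep_{\ov j}) + \tfrac12 \sum_{1 \le i < j \le n}(\ep_i - \ep_j),\\
\rho_1 &= \tfrac12 \sum_{i=1}^m \sum_{j=1}^n (\ep_{\ov i} - \ep_j) = \tfrac{n}{2}\sum_{i=1}^m \ep_{\ov i} - \tfrac{m}{2} \sum_{j=1}^n \ep_j,
\end{align*}
which will make both assertions essentially mechanical.

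For (2), I observe that $W$ factors as $\mf S_m \times \mf S_n$, with the two factors permuting $\{\ep_{\ov i}\}_{i=1}^m$ and $\{\ep_j\}_{j=1}^n$ respectively. Since $\rho_1$ is a scalar multiple of $\sum_i \ep_{\ov i}$ plus a scalar multiple of $\sum_j \ep_j$, and each of these sums is fixed by the corresponding symmetric group, the $W$-invariance is immediate.

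For (1), I would split into two cases according to the parity of $\beta \in \Pi$. For even simple $\beta$, the identity $(\rho_0, \beta) = \tfrac12(\beta,\beta)$ is the usual Lie algebra computation, which here requires only the bilinear form from \eqref{form} (with the sign subtlety that $(\ep_j,\ep_j)=-1$, giving $(\beta,\beta) = -2$ for $\beta = \ep_i - \ep_{i+1}$ and $(\rho_0,\beta) = -1$ to match). It then remains to show $(\rho_1, \beta) = 0$ for even $\beta$; this follows from part (2) via the standard fact that a $W$-invariant vector is orthogonal to any even root (since $s_\beta \rho_1 = \rho_1 - \tfrac{2(\rho_1,\beta)}{(\beta,\beta)}\beta = \rho_1$ forces $(\rho_1,\beta)=0$ whenever $(\beta,\beta) \neq 0$). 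For the unique odd simple root $\beta = \ep_{\ov m} - \ep_1$, one has $(\beta,\beta) = 1 + (-1) = 0$, so the claim reduces to showing $(\rho,\beta) = 0$; I would check this by a brief direct calculation, noting that $(\rho_0, \ep_{\ov m} - \ep_1) = \tfrac{1-m}{2} + \tfrac{n-1}{2} = \tfrac{n-m}{2}$ and $(\rho_1, \ep_{\ov m} - \ep_1) = \tfrac{n}{2} - (-\tfrac{m}{2})(-1) \cdot \ldots = \tfrac{n-m}{2}$ cancel.

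There is no real obstacle: the only pitfall is bookkeeping with the signs coming from the supertrace form, particularly the convention $(\ep_j,\ep_j)=-1$ for $1 \le j \le n$, which flips the sign of several intermediate quantities but not of the final identity. Conceptually the lemma is saying that despite the odd sector, $\rho$ still plays its classical role on the standard Dynkin diagram, and the proof confirms this on a diagram-by-diagram basis.
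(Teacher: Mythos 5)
Your proof is correct, and all the sign bookkeeping checks out: with the convention $(\ep_j,\ep_j)=-1$ one indeed gets $(\rho_0,\ep_j-\ep_{j+1})=-1=\hf(\beta,\beta)$, and for the odd simple root both $(\rho_0,\delta_m-\ep_1)$ and $(\rho_1,\delta_m-\ep_1)$ equal $\tfrac{n-m}{2}$ and cancel. For part (1) your argument is exactly the paper's (the paper simply says it ``can be verified directly''). For part (2) you take a slightly different route: you compute $\rho_1=\tfrac{n}{2}\sum_i\delta_i-\tfrac{m}{2}\sum_j\ep_j$ explicitly (this closed form is in fact the Exercise stated right after the Corollary) and read off the invariance from the fact that $\mf S_m\times\mf S_n$ fixes each of the two sums. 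The paper instead argues conceptually: $\ga_1$ is preserved by the adjoint group of $\ev\ga$, so $W$ permutes the set $\od\Phi^+$ of odd positive roots and hence fixes their half-sum. Your computation is more self-contained and makes part (1) mechanical; the paper's argument is coordinate-free and generalizes immediately to other Lie superalgebras where no such clean closed formula for $\rho_1$ is at hand. Your observation that part (2) plus the reflection formula forces $(\rho_1,\beta)=0$ for nonisotropic even $\beta$ is a nice economy, though a one-line direct check would do as well.
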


\begin{proof} Part 1 can be verified
directly. Part 2 follows from the fact that $\ga_1$ is preserved by the adjoint group
$\ev G$ associated to $\ev \ga$ under the adjoint action.
\end{proof}

\begin{corollary}
We have
\begin{equation} \label{typ ch}
\ch K(\la) =\frac{\prod_{\alpha \in \od \Phi^+} (1+
e^{-\alpha})}{\prod_{\alpha \in \ev \Phi^+} (1- e^{-\alpha})}
\sum_{\sigma \in W} \sgn (\sigma) e^{\sigma (\la +\rho) -\rho}.
\end{equation}
\end{corollary}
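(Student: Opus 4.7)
The plan is to compute $\ch K(\la)$ directly from the PBW identification \eqref{Kmodule}, $K(\la)=\wedge(\ga_{-1})\otimes L^0(\la)$, and then convert the Weyl numerator from shift by $\rho_0$ to shift by $\rho$ using part (2) of Lemma~\ref{lem:000}.

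First, since $\h$ acts semisimply on both tensor factors and the action on the tensor product is diagonal, we have $\ch K(\la)=\ch\wedge(\ga_{-1})\cdot\ch L^0(\la)$. The space $\ga_{-1}$ is spanned by the $E_{ij}$ with $i>0>j$, whose $\h$-weights are exactly the negative odd roots $-\alpha$ for $\alpha\in\od\Phi^+$; each weight space is one-dimensional and each generator is odd, so
$$
\ch\wedge(\ga_{-1})=\prod_{\alpha\in\od\Phi^+}(1+e^{-\alpha}).
$$
Combining this with Weyl's character formula for the $\ev\ga$-module $L^0(\la)$ recalled just before the corollary yields
$$
\ch K(\la)=\frac{\prod_{\alpha\in\od\Phi^+}(1+e^{-\alpha})}{\prod_{\alpha\in\ev\Phi^+}(1-e^{-\alpha})}\,\sum_{\sigma\in W}\sgn(\sigma)\,e^{\sigma(\la+\rho_0)-\rho_0}.
$$

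It remains to rewrite the alternating sum with $\rho_0$ replaced by $\rho=\rho_0-\rho_1$. For each $\sigma\in W$, using part (2) of Lemma~\ref{lem:000} (namely $\sigma(\rho_1)=\rho_1$), I compute
$$
\sigma(\la+\rho)-\rho=\sigma(\la+\rho_0)-\sigma(\rho_1)-\rho_0+\rho_1=\sigma(\la+\rho_0)-\rho_0,
$$
so the two alternating sums are term-by-term equal. Substituting this identity into the displayed expression gives the claimed formula \eqref{typ ch}.

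There is no real obstacle here: the content of the corollary is essentially bookkeeping once the PBW decomposition \eqref{Kmodule}, Weyl's formula for $\ev\ga$, and the $W$-invariance of $\rho_1$ are available. The only point that requires a moment's care is identifying the weights of $\ga_{-1}$ as $-\od\Phi^+$ (rather than $\od\Phi^+$), which fixes the sign of the exponent in the odd factor.
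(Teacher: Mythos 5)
Your proof is correct and follows exactly the paper's argument: factor $\ch K(\la)$ via the identification $K(\la)=\wedge(\ga_{-1})\otimes L^0(\la)$, apply Weyl's character formula to $L^0(\la)$, and pass from $\rho_0$ to $\rho$ using the $W$-invariance of $\rho_1$ from Lemma~\ref{lem:000}(2). The only difference is that you spell out the bookkeeping (the weights of $\ga_{-1}$ and the term-by-term identity $\sigma(\la+\rho)-\rho=\sigma(\la+\rho_0)-\rho_0$) which the paper leaves implicit.
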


\begin{proof}
By \eqref{Kmodule}, $\ch K(\la) =\ch L^0(\la) \prod_{\alpha \in \od \Phi^+} (1+
e^{-\alpha})$. Now by Weyl's character formula for $L^0(\la)$, we have
$$
\ch K(\la) =\frac{\prod_{\alpha \in \od \Phi^+} (1+
e^{-\alpha})}{\prod_{\alpha \in \ev \Phi^+} (1- e^{-\alpha})}
\sum_{\sigma \in W} \sgn (\sigma) e^{\sigma (\la +\rho_0)
-\rho_0}.
$$
This is equivalent to the formula in (\ref{typ ch}) by the second part of Lemma
\ref{lem:000}.
\end{proof}
\begin{xca}
Show that
$$
 \rho_1 =\frac{n}2 \sum_{i=1}^{m} \delta_i - \frac{m}2 \sum_{j=1}^n \ep_j.
$$
Again it follows that  $\sigma (\rho_1) =\rho_1, \forall \sigma \in W$.
\end{xca}

\begin{definition} \label{def:typ}
A weight $\la \in \h^*$ is called {\em typical}, if $\prod_{\alpha \in \od \Phi^+} (\la
+\rho, \alpha) \neq 0$. It is called {\em atypical} if it is not typical.
\end{definition}

\begin{theorem} [Kac]  \label{typical ch}
Let $\la \in P^+$. Then $\la$ is typical if and only if $\ch L(\la)$ is given by the right hand
side of \eqref{typ ch}, if and only if $K(\la)$ is irreducible.
\end{theorem}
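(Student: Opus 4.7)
The plan is to deduce the three-way equivalence by establishing $(2) \Leftrightarrow (3)$ as a formal consequence of the preceding character computation, and then focusing on the substantive equivalence $(1) \Leftrightarrow (3)$.

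The equivalence $(2) \Leftrightarrow (3)$ is essentially formal: the preceding proposition provides a surjection $K(\la) \twoheadrightarrow L(\la)$ onto the unique simple quotient, so $\ch L(\la) = \ch K(\la)$ forces $L(\la) = K(\la)$ and hence $K(\la)$ to be irreducible; conversely, irreducibility of $K(\la)$ immediately gives $\ch L(\la) = \ch K(\la)$, the latter being expressed by \eqref{typ ch}.

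For $(1) \Rightarrow (3)$, I would employ the quadratic Casimir $\Omega \in Z(U(\ga))$ built from the invariant form $(\cdot,\cdot)$, which acts on any highest weight $\ga$-module of highest weight $\mu$ by the scalar $(\mu, \mu + 2\rho)$. Suppose for contradiction that $\la$ is typical but $K(\la)$ contains a $\ga$-singular vector of weight $\mu \neq \la$. Central character equality yields $2(\la + \rho, \la - \mu) = (\la - \mu, \la - \mu)$. Exploiting the decomposition $K(\la) = \wedge(\ga_{-1}) \otimes L^0(\la)$ as a $\ev\ga$-module, the irreducibility of $L^0(\la)$, and taking the top $\wedge(\ga_{-1})$-degree component of the singular vector, I would pin down $\la - \mu$ to be a sum of distinct positive odd roots, $\la - \mu = \sum_j \alpha_{i_j}$. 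Using isotropy $(\alpha, \alpha) = 0$ for $\alpha \in \od \Phi^+$ then reduces the Casimir identity to
\begin{equation*}
\sum_j (\la + \rho, \alpha_{i_j}) = \sum_{j<l} (\alpha_{i_j}, \alpha_{i_l}).
\end{equation*}
A combinatorial argument on the root system of $\glmn$, using odd reflections to normalize the collection $\{\alpha_{i_j}\}$, should then force at least one individual $(\la + \rho, \alpha_{i_j})$ to vanish, contradicting typicality.

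For $(3) \Rightarrow (1)$, I would prove the contrapositive: atypicality forces reducibility. Fix $\alpha \in \od \Phi^+$ with $(\la + \rho, \alpha) = 0$ and apply a sequence of odd reflections (Lemma~\ref{odd ref}) to reach a Borel subalgebra in which $\alpha$ becomes a simple isotropic root. Since $(\rho, \alpha) = \hf(\alpha, \alpha) = 0$ for such $\alpha$, and tracking the controlled $\rho$-shifts through each odd reflection, the atypicality condition becomes $(\la', \alpha) = 0$ for the transformed highest weight $\la'$. Then $f_\alpha v_{\la'}$, with $f_\alpha$ spanning $\ga_{-\alpha}$, is a $\ga$-singular vector in the Kac module for the new Borel: the critical check $e_\alpha f_\alpha v_{\la'} = [e_\alpha, f_\alpha] v_{\la'} = (\la', \alpha) v_{\la'} = 0$ succeeds, while $e_\beta f_\alpha v_{\la'} = 0$ for other new simple roots $\beta$ follows by weight considerations. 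This exhibits a proper submodule, and the reducibility transfers back to the original $K(\la)$ via the Borel-independent parametrization of simple highest weight modules.

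The main obstacle is the combinatorial step in direction $(1) \Rightarrow (3)$: extracting from the displayed identity that some single summand $(\la + \rho, \alpha_{i_j})$ vanishes. The natural route is an induction on the number of odd roots involved, leveraging the explicit $\delta_i/\ep_j$-coordinate description of odd roots of $\glmn$ to make the pairings $(\alpha_{i_j}, \alpha_{i_l}) \in \{0, \pm 1\}$ transparent, combined with odd reflections to reduce to a minimal configuration in which the constraint decouples into individual vanishing conditions.
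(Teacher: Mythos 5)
Your reduction of the three-way equivalence to ``typical $\Leftrightarrow K(\la)$ irreducible'' is correct, and your $(3)\Rightarrow(1)$ sketch via odd reflections is a workable, genuinely different route from the paper's, provided you are careful about what transfers between Borel subalgebras: odd reflections change the conjugacy class of the Borel, hence a priori the induced module, so the clean statement to extract is that $f_\alpha$ kills the new highest weight vector in $L(\la)$ but not in $K(\la)$, which already gives $L(\la)\subsetneq K(\la)$. The genuine gap is in $(1)\Rightarrow(3)$. The quadratic Casimir gives you only the single scalar identity
$$\sum_j (\la+\rho,\alpha_{i_j}) \;=\; \sum_{j<l}(\alpha_{i_j},\alpha_{i_l}),$$
and this cannot force any individual term $(\la+\rho,\alpha_{i_j})$ to vanish: already for two mutually orthogonal positive odd roots such as $\delta_1-\ep_1$ and $\delta_2-\ep_2$ the right-hand side is $0$, and the identity is satisfied whenever $(\la+\rho,\alpha_{i_1})=-(\la+\rho,\alpha_{i_2})$, with both nonzero. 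No normalization by odd reflections repairs this, because the obstruction is structural: one equation in several unknowns does not decouple into individual vanishing conditions. The Casimir eigenvalue is a necessary but very weak consequence of the existence of a singular vector; to make a central-character argument work one needs the full Harish-Chandra description of $Z(U(\glmn))$ (the Sergeev--Kac linkage principle), which is a far heavier input and is normally proved after, not before, this theorem.

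The paper's proof avoids the issue entirely by a direct computation inside $K(\la)$: any nonzero submodule $S$ contains $\prod_{\alpha\in\od\Phi^+} f_\alpha v_\la$ (the image of the top exterior power of $\ga_{-1}$), and applying $\prod_{\beta\in\od\Phi^+} e_\beta$ to that vector returns $\prod_{\alpha\in\od\Phi^+}(\la+\rho,\alpha)\,v_\la$ up to a nonzero scalar. Typicality then puts $v_\la$ in $S$, so $S=K(\la)$; conversely, irreducibility forces this product to be nonzero. Note that this mechanism produces the \emph{product} $\prod_\alpha(\la+\rho,\alpha)$, which is exactly what is needed to conclude that some single factor vanishes, whereas your Casimir identity only produces a \emph{sum}. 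If you want to keep your framework, replace the Casimir step by this computation.
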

The example of $\gl(1|1)$ in Section~\ref{sec:1|1} fits well with this theorem.

\begin{proof}[Sketch of a proof] \cite{K2}
Let $v_\la \in K(\la)$ be a highest weight vector. Denote by $e_\alpha, f_\alpha$ the
generators of $\ga_{\pm \alpha}$, where $\alpha \in \Phi^+$.

Assume that $S$ is a nonzero $\ga$-submodule of $K(\la)$. One first shows easily that

(i) $\prod_{\alpha \in \od \Phi^+} f_{\alpha} v_\la \in S$.

It follows that

(ii) $ \prod_{\beta \in \od \Phi^+} e_{\beta} \prod_{\alpha \in \od \Phi^+} f_{\alpha}
v_\la \in S$.

Then one further shows that (up to a nonzero scalar multiple)

(iii)   $ \prod_{\beta \in \od \Phi^+} e_{\beta} \prod_{\alpha \in \od \Phi^+} f_{\alpha}
v_\la = \prod_{\alpha \in \od \Phi^+} (\la +\rho, \alpha) \ v_\la$.

Step (iii) uses the $W$-invariance of $\od \Phi^+$ and a degree counting argument among
others.

From (i) and (iii) it follows that if $\la$ is atypical, then $K(\la)$ is irreducible.

Now suppose that $K(\la)$ is irreducible. The $\text{ad } \ga_{{0}}$-invariance of $\ga_{\pm 1}$
implies that if $v_\la\in U(\ga)\prod_{\alpha\in\Phi^+_{\bar{1}}}f_\alpha v_\la$,
then $v_\la$ is a scalar multiple of $\prod_{\beta \in \od \Phi^+} e_{\beta}
\prod_{\alpha \in \od \Phi^+} f_{\alpha} v_\la$. Thus by (iii) $\la$ is typical.
\end{proof}
\subsection{Odd reflections}

As usual, we let $\{e_\beta, h_\beta,  f_\beta\}$ denote the Chevalley generators  for
$\beta \in \Phi^+$. The following simple and fundamental lemma for odd reflections has
been used by many authors (e.g.~\cite[Appendix]{LSS}, \cite[Lemma 0.3]{PS} and
\cite[Lemma 1.4]{KW}; also compare \cite[(2.12)]{DP} for an unconventional definition).
Recall the new Borel $\mf{b}(\alpha)$ for  a simple isotropic odd root $\alpha$ from
Lemma~\ref{odd ref}.

\begin{lemma}  \label{hwt odd}
Let $L$ be a simple $\ga$-module of $\mf b$-highest weight $\la$
and let $v$ be a $\mf b$-highest weight vector of $L$. Let
$\alpha$ be a simple isotropic odd root.
\begin{enumerate}
\item If $\langle \la, h_\alpha \rangle = 0$, then $L$ is a
$\ga$-module of $\mf b (\alpha)$-highest weight $\la$  and $v$ is
a $\mf b (\alpha)$-highest weight vector.

\item If $\langle \la, h_\alpha \rangle \neq 0$, then $L$ is a
$\ga$-module of $\mf b (\alpha)$-highest weight $\la -\alpha$ and
$f_\alpha v$ is a $\mf b (\alpha)$-highest weight vector.
\end{enumerate}
\end{lemma}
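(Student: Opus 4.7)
The plan is to verify the $\mf b(\alpha)$-highest weight conditions directly by computing the action of each root space in $\n^+(\alpha) := \bigoplus_{\gamma \in \Phi(\alpha)^+} \ga_\gamma$. Using the equality $\Phi(\alpha)^+ = (\Phi^+ \setminus \{\alpha\}) \cup \{-\alpha\}$, this gives
$$\n^+(\alpha) = \Bigl( \bigoplus_{\gamma \in \Phi^+ \setminus \{\alpha\}} \ga_\gamma \Bigr) \oplus \C f_\alpha,$$
so the verification reduces to checking annihilation by each $\ga_\gamma$ with $\gamma \in \Phi^+ \setminus \{\alpha\}$ and by $f_\alpha$. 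Two preliminary facts drive everything: (i) $\alpha$ being odd and isotropic gives $[f_\alpha, f_\alpha] = 0$, hence $f_\alpha^2 = 0$ as an operator (and similarly $e_\alpha^2 = 0$); and (ii) for $\gamma \in \Phi^+ \setminus \{\alpha\}$ the weight $\gamma - \alpha$ is never a negative root, since $\alpha$ is simple and the only nonnegative simple-root combination $\gamma$ with $\gamma \le \alpha$ in $\Phi^+$ is $\gamma = \alpha$ itself. Consequently, for any $x \in \ga_\gamma$ with $\gamma \in \Phi^+ \setminus \{\alpha\}$, the bracket $[x, f_\alpha] \in \ga_{\gamma - \alpha}$ either vanishes or lies in $\n^+$.

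For part (1), assuming $\langle \la, h_\alpha \rangle = 0$, I would first show $f_\alpha v$ is itself a $\mf b$-highest weight vector. For $x \in \ga_\gamma$ with $\gamma \in \Phi^+ \setminus \{\alpha\}$, super Jacobi yields $x \cdot f_\alpha v = [x, f_\alpha] v \pm f_\alpha (x v) = 0$, since $[x, f_\alpha] v \in \n^+ v = 0$ by (ii) and $x v = 0$. For $\gamma = \alpha$, the super-bracket relation $e_\alpha f_\alpha + f_\alpha e_\alpha = h_\alpha$ gives $e_\alpha f_\alpha v = h_\alpha v - f_\alpha e_\alpha v = h_\alpha v = 0$ by hypothesis. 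Hence $\n^+ \cdot f_\alpha v = 0$. Since $L$ is a simple $\mf b$-highest weight module with highest weight $\la$, the only $\mf b$-highest weight vectors lie in the weight-$\la$ line; but $f_\alpha v$ has weight $\la - \alpha \ne \la$, so $f_\alpha v = 0$. Combined with $\n^+ v = 0$ this gives $\n^+(\alpha) v = 0$, so $v$ is $\mf b(\alpha)$-highest of weight $\la$.

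For part (2), assume $\langle \la, h_\alpha \rangle \ne 0$. The same identity $e_\alpha f_\alpha v = h_\alpha v$ is now nonzero, so $f_\alpha v \ne 0$. The computation $x \cdot f_\alpha v = 0$ for $x \in \ga_\gamma$, $\gamma \in \Phi^+ \setminus \{\alpha\}$, from the previous paragraph never used the hypothesis $h_\alpha v = 0$ and applies verbatim; and $f_\alpha \cdot f_\alpha v = f_\alpha^2 v = 0$ by (i). Therefore $\n^+(\alpha) \cdot f_\alpha v = 0$, so $f_\alpha v$ is a $\mf b(\alpha)$-highest weight vector of weight $\la - \alpha$.

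The main technical point is the combinatorial observation (ii), which reduces $[x, f_\alpha] v$ to an element of $\n^+ v = 0$; without it the super Jacobi manipulations would not close up. The remaining ingredients, super Jacobi and the nilpotency $f_\alpha^2 = 0$, are routine, and the only representation-theoretic input is the standard uniqueness (up to scalar) of the highest-weight line in a simple highest-weight module.
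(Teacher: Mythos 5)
Your proof is correct and follows essentially the same route as the paper's: establish the three identities $e_\alpha f_\alpha v = \langle\la,h_\alpha\rangle v$, $\ga_\gamma\cdot f_\alpha v=0$ for $\gamma\in\Phi^+\setminus\{\alpha\}$ (via the observation that $\gamma-\alpha$ is never a negative root when $\alpha$ is simple), and $f_\alpha^2 v=0$ from isotropy, then in case (1) invoke uniqueness of the highest-weight line in a simple module to force $f_\alpha v=0$. The paper states the facts slightly more tersely and phrases (1) as $f_\alpha v$ would otherwise be a $\mf b$-singular vector, but the underlying logic is identical.
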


\begin{proof}
We first observe three simple identities:

(i) $ e_\alpha f_\alpha v = [e_\alpha, f_\alpha] v =h_\alpha v =
\langle \la, h_\alpha \rangle v. $

(ii) $ e_\beta f_{\alpha} v = [e_\beta, f_{\alpha}] v=0$ for any
$\beta \in \Phi^{+} \cap \Phi(\alpha)^{+}$, since either $\beta
-\alpha$ is not a root or it belongs to $\Phi^{+} \cap
\Phi(\alpha)^{+}$.

(iii) $f_{\alpha}^2 v =0$, since $\alpha$ is an isotropic odd root.

Now, we consider the two cases separately.

(1) Assume that $\langle \la, h_\alpha \rangle = 0$. Then we must
have $f_\alpha v =0$, otherwise $f_\alpha v$ would be a $\mf
b$-singular vector in the simple $\ga$-module $V$ by (i) and (ii).
This together with Lemma \ref{odd ref} implies that $v$ is a $\mf
b (\alpha)$-highest weight vector of weight $\la$ in the
$\ga$-module $V$.

(2) Assume that $\langle \la, h_\alpha \rangle \neq 0$. Then (i),
(ii), (iii) and Lemma \ref{odd ref} imply that $f_\alpha v$ is
nonzero and it is a $\mf b (\alpha)$-highest weight vector of
weight $\la -\alpha$ in $V$.
\end{proof}

\subsection{Finite-dimensional simple $\osp$-modules}

The case of $\ga =\spo(2m|2n+1)$ will be treated in detail (while
the case of $\ga =\spo(2m|2n)$ is similar). As usual, we have the
triangular decomposition of $\ga$ (with respect to the standard
Borel) $\ga =\n^- \oplus \h \oplus \n^+$, which allows us to
define the Verma module $\Delta (\la)$ associated to $\la
=\sum_{i=1}^{m} \la_i \delta_i + \sum_{j=1}^n \la_j \ep_j \in
\h^*$. Then $\Delta(\la)$ admits a unique simple quotient
$\ga$-module, denoted by $L(\la)$.

As for Lie algebras, a finite-dimensional simple $\ga$-module has
to be a highest weight module (with respect to any Borel), and
hence is isomorphic to some $L(\la)$, and $L(\la) \not \cong
L(\mu)$ if $\la \neq \mu$. However, the classification of
finite-dimensional simple $\ga$-modules is non-trivial, partly
because the even subalgebra of $\ga$ is not a Levi subalgebra.
Clearly a necessary condition for $L(\la)$ to be
finite-dimensional is that $\la$ is dominant integral with respect
to the even subalgebra $\ev \ga = \mf{sp}(2m) \oplus {\mf so}
(2n+1)$.

\begin{definition}\label{hook:def}
A partition $\mu =(\mu_1, \mu_2, \ldots)$, or simply a {\em hook partition} when $m,n$
are implicitly understood,  is called an {\em $(m|n)$-hook partition} if $\mu_{m+1} \le
n$.
\end{definition}
\begin{center}
\hskip 0cm \setlength{\unitlength}{0.25in}
\begin{picture}(7.5,6.5)
\put(0,0){\line(1,0){1}} \put(1,0){\line(0,1){2}} \put(1,2){\line(1,0){1}}
\put(2,2){\line(0,1){1}} \put(2,3){\line(1,0){1}} \put(3,3){\line(0,1){1}}
\put(3,4){\line(1,0){1}} \put(4,4){\line(0,1){1}} \put(4,5){\line(1,0){3}}
\put(7,5){\line(0,1){1}} \put(7,6){\line(-1,0){7}} \put(0,6){\line(0,-1){6}}
\put(1.7,4){\makebox(0,0)[c]{\Large$\mu$}} \put(-.1,3){\line(1,0){0.2}}
\put(-.5,3){\makebox(0,0)[c]{$m$}} \put(4,6.1){\line(0,-1){0.2}}
\put(4,6.5){\makebox(0,0)[c]{$n$}} \put(4,3){\linethickness{1pt}\line(0,-1){3}}
\put(4,3){\linethickness{1pt}\line(1,0){3}}
\end{picture}
\end{center}

Given an $(m|n)$-hook partition $\mu$, we denote by $\mu^+
=(\mu_{m+1}, \mu_{m+2}, \ldots)$ and write its transpose, which is
necessarily of length $\leq n$, as $\nu =(\mu^+)' =(\nu_1, \ldots,
\nu_n)$. We define the weights
\begin{eqnarray}
\mu^\natural &=& \mu_1 \delta_{1} +\ldots +\mu_m \delta_{m} +\nu_1
\ep_1 +\ldots +\nu_{n-1} \ep_{n-1} + \nu_n \ep_n\label{naturalmap}
   \\
\mu^\natural_- &=& \mu_1 \delta_{1} +\ldots +\mu_m \delta_{m} +\nu_1
\ep_1 +\ldots +\nu_{n-1} \ep_{n-1} - \nu_n \ep_n.\nonumber
\end{eqnarray}
($\mu^\natural_-$ is only used for $\spo(2m|2n)$ below).

\begin{theorem}\label{finite:hw:b}
Given any $(m|n)$-hook partition $\mu$, the simple
$\spo(2m|2n+1)$-module $L(\mu^\natural)$ (with respect to the
standard Borel) is finite-dimensional. Moreover, these modules
form a complete list of non-isomorphic finite-dimensional simple
$\spo(2m|2n+1)$-modules.
\end{theorem}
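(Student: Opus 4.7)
The plan is to establish both directions of the theorem via odd reflections (Lemma \ref{hwt odd}).

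For the necessity direction, let $L(\la)$ be finite-dimensional with $\la = \sum_i \la_i \delta_i + \sum_j \la_j' \epsilon_j$ the highest weight with respect to the standard Borel. Restriction to $\ev\ga = \mf{sp}(2m) \oplus \mf{so}(2n+1)$ together with $\osp(1|2)$-integrability along the odd non-isotropic roots $\pm \delta_i$ (via Exercise \ref{xca:osp2}) forces $\la_1 \ge \cdots \ge \la_m \ge 0$ and $\la_1' \ge \cdots \ge \la_n' \ge 0$ as non-negative integers. To extract the extra hook condition $\la_m \ge \#\{j : \la_j' > 0\}$, I would apply odd reflections iteratively. Starting with $r_{\delta_m - \epsilon_1}$: by Lemma \ref{hwt odd}, whenever $\la_1' > 0$ the relevant coroot pairing is nonzero, so the new highest weight becomes $\la - (\delta_m - \epsilon_1)$, whose $\delta_m$-coefficient is $\la_m - 1$. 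Since odd reflections preserve the even positive roots, this new weight must still be $\ev\ga$-dominant integral, forcing $\la_m \ge 1$. Iterating with $r_{\delta_m - \epsilon_k}$ in the successively reflected Borels for $k = 2, 3, \ldots, p$ where $p = \#\{j : \la_j' > 0\}$ yields $\la_m \ge p$, which is precisely the condition for $\la = \mu^\natural$ with $\mu$ an $(m|n)$-hook partition.

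For the sufficiency direction, given an $(m|n)$-hook partition $\mu$, I would reverse the above procedure: apply a matching sequence of odd reflections starting from the standard Borel to transport $\mu^\natural$ to a target Borel in which the transformed highest weight is manifestly $\ev\ga$-dominant integral and integrability at every simple root can be verified directly ($\mf{sl}(2)$-analysis at even simple roots, and $\osp(1|2)$-analysis from Exercise \ref{xca:osp2} at any odd non-isotropic simple root encountered, together with the isotropic-root constraint from Lemma \ref{hwt odd}). Since an odd reflection does not change the underlying simple $\ga$-module, finite-dimensionality in the target Borel transfers back to $L(\mu^\natural)$ in the standard Borel.

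The main obstacle lies in the sufficiency direction: one must choose the target Borel carefully so that the transformed highest weight exhibits the required integrability conditions, and a delicate weight-tracking at every intermediate Borel is required to see that no integrability is lost. The hook condition $\mu_{m+1} \le n$ plays a crucial role here by bounding the length of the reflection sequence within the available odd simple roots of $\ga$. A cleaner conceptual alternative is to realize $L(\mu^\natural)$ as a constituent inside a tensor power of the natural module $\C^{2m|2n+1}$ via a Howe-type duality (as discussed in Section \ref{sec:Howe app}), which yields finite-dimensionality automatically; the odd reflection approach has the merit of being intrinsic and extending verbatim to the parallel parameterization for $\spo(2m|2n)$ via $\mu^\natural_{\pm}$.
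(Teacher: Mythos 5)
Your necessity argument is sound and is essentially the paper's: the paper reduces to the subalgebra $\spo(2|2n+1)$ and performs the full chain of odd reflections $\delta_1-\ep_1,\ldots,\delta_1-\ep_n$ to reach a Borel with non-isotropic simple root $\delta_1$, whereas you stop after $p=\nu_1'$ reflections and read off $\mf{sp}(2m)$-dominance of the intermediate extremal weights against the (still positive) even root $2\delta_m$; both variants work, since odd reflections preserve the even positive system and each extremal weight of a finite-dimensional module must be $\ev\ga$-dominant integral.

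The sufficiency direction, however, has a genuine gap. Your plan is to move $\mu^\natural$ to a well-chosen Borel and then ``verify integrability at every simple root'' there, concluding finite-dimensionality. For a Lie superalgebra this inference is invalid: the local conditions at the simple roots of a single Borel (integrality and dominance at even simple roots, the $\osp(1|2)$ condition at non-isotropic odd ones, and nothing at isotropic ones) are necessary but not sufficient, because the isotropic odd simple roots impose no constraint and there is no Weyl-group or Verma-module mechanism that bounds the weights of $L(\la)$ from these local data. Concretely, for $\spo(2|3)$ the weight $\la=\ep_1$ satisfies every local condition at the standard Borel ($\Pi=\{\delta_1-\ep_1,\ \ep_1\}$), yet $L(\ep_1)$ is infinite-dimensional --- which is exactly why the hook condition $\nu_1'\le\mu_m$ is needed at all. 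Checking the conditions in \emph{all} Borels is again only the necessary direction; the statement that it suffices is equivalent to what you are trying to prove and cannot simply be invoked. What is missing is an actual construction of a finite-dimensional module with the prescribed highest weight. The paper supplies it as follows: enlarge to $\spo(2M|2n+1)$ with $M\ge\ell(\mu)$, where the weight $\mu$ itself (all coefficients on the $\delta$'s) labels a finite-dimensional simple module because it occurs as a subquotient of a tensor product of fundamental-weight modules; then apply odd reflections to show that the $\mf b'$-highest weight of this module, for a Borel $\mf b'$ compatible with the embedding $\spo(2m|2n+1)\subset\spo(2M|2n+1)$, is precisely $\mu^\natural$; finally restrict. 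Your closing remark about realizing $L(\mu^\natural)$ in a tensor power of the natural module points in the right direction (and is close in spirit to the paper's construction), but you neither carry it out nor can you borrow it from Section~\ref{sec:Howe app}, which concerns the infinite-dimensional oscillator modules on $S(\C^d\otimes\C^{m|n})$ rather than tensor powers of the natural $\osp$-module; determining which highest weights occur in $(\C^{2m|2n+1})^{\otimes d}$ is itself a nontrivial Sergeev-type statement not established in these notes.
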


The above theorem is due to Kac \cite{K2} who formulated the
conditions for finite-dimensional highest weight simple
$\ga$-modules in terms of Dynkin labels (instead of hook
partitions). A different proof using odd reflections is given by
Shu and Wang (which also works in characteristic $p>0$). Let us
sketch the idea of a proof following \cite{SW}, as the argument
therein bears some similarity to the argument used later on for
super duality. The same type of argument works for
Theorem~\ref{finite:D} below for $\spo(2m|2n)$.

\begin{proof} [Sketch of a proof]
Let $\mu$ be an $(m|n)$-hook partition. We observe that the simple
$\spo(2M|2n+1)$-module $V$ of highest weight $\mu$ (with respect
to the standard Borel) is finite-dimensional for $M\ge \ell(\mu)$,
since it appears as a subquotient of a suitable tensor product of
simple modules of fundamental weights. Then we use odd reflections
to change the standard Borel of $\spo(2M|2n+1)$ to a Borel which
is compatible with the standard Borel of $\spo(2m|2n+1)$ (which is
regarded as a subalgebra of $\spo(2M|2n+1)$). We can show that the
highest weight of $V$ with respect to the new Borel is
$\mu^\natural$, hence by restriction to  $\spo(2m|2n+1)$ we have
proved the first part of the theorem.

A highest weight for a simple finite-dimensional $\ga$-module is
necessarily of the form $\mu_1 \delta_{1} +\ldots +\mu_m
\delta_{m} +\nu_1 \ep_1 +\ldots + \nu_n \ep_n$, where $(\mu_1,
\ldots, \mu_m)$ and $(\nu_1, \ldots, \nu_n)$ are partitions by the
dominance condition on the even subalgebra $\ev \ga$. To prove the
remaining condition $\nu'_1 \le \mu_m$, it suffices to prove it
for $\spo(2|2n+1)$ (which is a subalgebra of $\spo(2m|2n+1)$). Let
$V$ be a finite-dimensional simple $\spo(2|2n+1)$-module.
Via the sequence of odd reflections
$\delta_1-\epsilon_1,\delta_1-\epsilon_2,\ldots,\delta_1-\epsilon_n$ we change the
standard Borel to a Borel with an odd non-isotropic simple root $\delta_{1}$. The
dominance condition on the simple root $\delta_{1}$ (or rather on the even root
$2\delta_{1}$), which is imposed by the finite-dimensionality of $V$, provides the
desired necessary condition.
\end{proof}

\begin{xca}
Complete the details in the proof of Theorem \ref{finite:hw:b}.
\end{xca}

Similarly, we classify the finite-dimensional simple
$\spo(2m|2n)$-modules.

\begin{theorem}\label{finite:hw:d} \cite{K2} (cf.~\cite{SW}) \label{finite:D}
The modules $L(\mu^\natural)$ and $L(\mu^\natural_-)$(with respect
to the standard Borel), where $\mu$ runs over all $(m|n)$-hook
partitions, are all the non-isomorphic finite-dimensional simple
$\spo(2m|2n)$-modules.
\end{theorem}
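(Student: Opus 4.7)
The plan is to adapt the strategy sketched for Theorem \ref{finite:hw:b} to the type $D$ situation, where the only structural difference is that the orthogonal factor $\mf{so}(2n)$ of the even subalgebra has dominant integral weights whose last coordinate $\nu_n$ can be of either sign. This is precisely what produces the doubling $\mu^\natural$ versus $\mu^\natural_-$.

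For the existence half, I would fix an $(m|n)$-hook partition $\mu$ and pick $M\ge\ell(\mu)$. A suitable subquotient of a tensor product of fundamental $\spo(2M|2n)$-modules yields a finite-dimensional simple $\spo(2M|2n)$-module $V$ of $\mf b$-highest weight $\mu_1\delta_1+\cdots+\mu_{\ell(\mu)}\delta_{\ell(\mu)}$ for the standard Borel $\mf b$. Then I would apply a sequence of odd reflections sliding the odd simple root past $\delta_{m+1},\ldots,\delta_M$, converting $\mf b$ into a Borel $\mf b'$ whose intersection with $\spo(2m|2n)\subset\spo(2M|2n)$ is the standard Borel of $\spo(2m|2n)$. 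Using Lemma \ref{hwt odd} at each step, I would compute that the $\mf b'$-highest weight of $V$ becomes $\mu^\natural$. Restricting $V$ to $\spo(2m|2n)$ and taking the summand generated by the $\mf b'$-highest weight vector yields $L(\mu^\natural)$. To obtain $L(\mu^\natural_-)$, I would repeat the same construction after composing with the diagram involution of $D_n$ that flips $\ep_n\mapsto -\ep_n$ (equivalently, choose the $\mf{so}(2n)$-highest weight with $\nu_n$ replaced by $-\nu_n$ before performing the odd reflections).

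For the converse, any finite-dimensional simple $\spo(2m|2n)$-module is of the form $L(\la)$ for some $\la$ that is forced to be dominant integral for $\ev{\ga}=\mf{sp}(2m)\oplus\mf{so}(2n)$. This pins down $\la=\mu_1\delta_1+\cdots+\mu_m\delta_m+\nu_1\ep_1+\cdots+\nu_{n-1}\ep_{n-1}\pm\nu_n\ep_n$ where $(\mu_i)$ and $(\nu_1,\ldots,\nu_n)$ are partitions. The remaining non-trivial constraint is the hook condition $\nu'_1\le\mu_m$. As in the odd case, this reduces to proving it for the rank-one subalgebra $\spo(2|2n)\hookrightarrow\spo(2m|2n)$: applying the sequence of odd reflections $r_{\delta_1-\ep_1},r_{\delta_1-\ep_2},\ldots,r_{\delta_1-\ep_n}$ successively transforms the standard Borel into one in which $\delta_1$ becomes an odd non-isotropic simple root. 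Finite-dimensionality forces the usual $\mf{sl}(2)$-type dominance condition on the associated even root $2\delta_1$, which, translated back, is exactly $\nu'_1\le\mu_m$.

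The main obstacle will be the careful bookkeeping through the odd reflection chain, especially the verification that the combinatorial end result coincides with $\mu^\natural$ (respectively $\mu^\natural_-$): each reflection potentially shifts the weight by the corresponding odd simple root according to the two cases of Lemma \ref{hwt odd}, and tracking which coordinates of $\la$ are incremented gives precisely the conjugation operation that maps $(\mu_{m+1},\mu_{m+2},\ldots)$ to $(\nu_1,\ldots,\nu_n)=(\mu^+)'$. The type $D$ fork at the end of the even Dynkin diagram must also be handled: the two conjugacy classes of Borel subalgebras of $\spo(2m|2n)$ related by the outer diagram automorphism naturally account for the two inequivalent simples $L(\mu^\natural)$ and $L(\mu^\natural_-)$, and showing they are in fact non-isomorphic simply follows from $\mu^\natural\ne\mu^\natural_-$ whenever $\nu_n>0$ (while the two coincide when $\nu_n=0$, in which case there is only one module as expected).
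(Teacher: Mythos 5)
Your overall strategy is exactly the one the paper intends (it states explicitly that the odd-reflection argument for $\spo(2m|2n+1)$ carries over), and both the existence half and the use of the outer automorphism $\ep_n\mapsto-\ep_n$ to produce $L(\mu^\natural_-)$ from $L(\mu^\natural)$ are sound. However, the converse direction contains a concrete error: you assert that the chain of odd reflections $r_{\delta_1-\ep_1},\ldots,r_{\delta_1-\ep_n}$ turns $\delta_1$ into an odd non-isotropic simple root of $\spo(2|2n)$. That statement is true for $\spo(2|2n+1)$, where $\pm\delta_1$ are genuine odd roots coming from the extra odd-dimensional direction, but it is false in type $D$: the odd roots of $\spo(2|2n)$ are exactly $\pm\delta_1\pm\ep_k$, so $\delta_1$ is not a root at all, and the step as written fails.

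The correct adaptation is forced by the $D_n$ fork. After reflecting through $\delta_1-\ep_1,\ldots,\delta_1-\ep_{n-1}$ the simple system ends with the two isotropic odd roots $\delta_1-\ep_n$ and $\delta_1+\ep_n$; since $(\delta_1-\ep_n,\delta_1+\ep_n)=2\neq 0$, one further odd reflection at $\delta_1-\ep_n$ (or at $\delta_1+\ep_n$, depending on the sign of $\nu_n$) produces the \emph{even} simple root $2\delta_1=(\delta_1-\ep_n)+(\delta_1+\ep_n)$ by Lemma~\ref{odd ref}. The hook condition $\nu_1'\le\mu_m$ then comes from the integrality/dominance of the transformed highest weight with respect to the $\sgl(2)$-triple attached to $2\delta_1$, after tracking via Lemma~\ref{hwt odd} how many of the $n$ odd reflections actually shift the weight (that count is $\nu_1'$). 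So your conclusion is right, but the mechanism by which $2\delta_1$ enters the simple system must be the fork collapse just described, not a non-isotropic odd root; this is precisely the point where types $B$ and $D$ diverge, and it is also where the sign of $\nu_n$ and hence the doubling $\mu^\natural$ versus $\mu^\natural_-$ must be handled.
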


Since $\ev \ga$ is not a Levi subalgebra of $\ga$, the above
definition of the Kac module for $\gl(m|n)$-module is no longer
valid for $\ga$. Nevertheless, the notion of ``typical weights" in
Definition~\ref{def:typ} still makes sense for $\osp$ (or any
basic classical Lie superalgebra). The following is the $\osp$
counterpart of Theorem~\ref{typical ch}, whose much more involved
proof will be skipped.

\begin{theorem} \cite{K2}
Let $\ga =\osp(\ell|2n)$, for $\ell =2m$ or $2m+1$. If the weight
$\la =\mu^\natural \in \h^*$ (and in addition $\la
=\mu^\natural_-$ when $\ell =2m$) for an $(m|n)$-hook partition
$\mu$ is typical, then $\ch L(\la)$ is given by the right hand
side of \eqref{typ ch}.
\end{theorem}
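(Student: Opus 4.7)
\emph{Proof proposal.}
The plan is to adapt the Weyl--Kac style argument that established Theorem~\ref{typical ch} for $\gl(m|n)$, by combining the finite-dimensionality of $L(\la)$ (Theorems~\ref{finite:hw:b} and~\ref{finite:hw:d}) with a linkage analysis tailored to $\osp$.

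First, I would rewrite the right hand side of \eqref{typ ch} in terms of Verma characters. By PBW, the Verma module $\Delta(\mu) = U(\ga) \otimes_{U(\mf b)} \C_\mu$ satisfies
\begin{equation*}
\ch \Delta(\mu) = \frac{e^\mu \prod_{\alpha \in \od\Phi^+}(1+e^{-\alpha})}{\prod_{\alpha \in \ev\Phi^+}(1-e^{-\alpha})},
\end{equation*}
and the $W$-invariance $\sigma \rho_1 = \rho_1$ (Lemma~\ref{lem:000}(2)) identifies $\sigma(\la+\rho)-\rho$ with $\sigma(\la+\rho_0)-\rho_0 =: \sigma \cdot \la$. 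Hence the theorem is equivalent to the Weyl--Kac type identity
\begin{equation*}
\ch L(\la) = \sum_{\sigma \in W} \sgn(\sigma)\, \ch \Delta(\sigma \cdot \la).
\end{equation*}

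Next, I would invoke the linkage principle. Expanding $\ch L(\la) = \sum_\mu a_\mu \ch \Delta(\mu)$ as an integral combination with $a_\la = 1$ and $\mu \le \la$, only those $\mu$ whose Verma module shares the central character of $\Delta(\la)$ can appear. The super analogue of the Harish-Chandra homomorphism $Z(U(\ga)) \to S(\h)$ (Sergeev, Kac) forces such $\mu$ to belong to $W \cdot \la$ modified by integer combinations of isotropic odd roots $\alpha$ with $(\la+\rho, \alpha) = 0$; the typicality hypothesis annihilates every such modification, leaving precisely $\mu \in W \cdot \la$. To pin down the coefficients, set
\begin{equation*}
F(\la) := e^\rho \cdot \ch L(\la) \cdot \frac{\prod_{\alpha \in \ev\Phi^+}(1-e^{-\alpha})}{\prod_{\alpha \in \od\Phi^+}(1+e^{-\alpha})} = \sum_\mu a_\mu\, e^{\mu + \rho}.
\end{equation*}
Finite-dimensionality of $L(\la)$ makes $\ch L(\la)$ $W$-invariant; the odd factor is $W$-invariant because each $e^{\alpha/2}+e^{-\alpha/2}$ is $\alpha \leftrightarrow -\alpha$ symmetric and $W$ permutes $\od \Phi$; the even factor combined with $e^{\rho_0}$ is the Weyl denominator for $\ev \ga$, which is $W$-anti-invariant. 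Together with $\sigma \rho_1 = \rho_1$ these yield $\sigma F(\la) = \sgn(\sigma) F(\la)$, and since $a_\la = 1$ this forces $a_{\sigma \cdot \la} = \sgn(\sigma)$, delivering \eqref{typ ch}.

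The main obstacle is the linkage step. The elegant $\gl(m|n)$ shortcut, where one forms $K(\la) = \wedge(\ga_{-1}) \otimes L^0(\la)$ from the $\Z$-grading $\ga = \ga_{-1} \oplus \ev \ga \oplus \ga_1$, breaks down for $\osp$: $\ev \ga$ is no longer a Levi subalgebra and the positive odd roots do not span an abelian subalgebra (already visible for $\osp(1|2)$, where $[E,E]=2e$). One is therefore forced to work directly with the super Harish-Chandra homomorphism. A more hands-on alternative, closer in spirit to Kac's original $\gl$ argument, would be to induce a generalized Kac module from a carefully chosen non-Levi parabolic and to evaluate a Shapovalov-type determinant along the odd directions; this is expected to evaluate (up to a nonzero scalar) to $\prod_{\alpha \in \od \Phi^+}(\la+\rho, \alpha)$, so typicality would force irreducibility of the induced module and the character could be read directly off its PBW basis.
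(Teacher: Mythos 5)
The paper gives no proof of this statement: immediately before the theorem the text says that the ``much more involved proof will be skipped,'' with the result attributed to Kac \cite{K2}. There is therefore no internal argument to compare your proposal against, so I am assessing it on its own terms. The Weyl--Kac mechanics in your outline are correct. Writing the super Weyl denominator as $D = e^{\rho_0}\prod_{\alpha\in\ev\Phi^+}(1-e^{-\alpha}) \big/ \bigl(e^{\rho_1}\prod_{\alpha\in\od\Phi^+}(1+e^{-\alpha})\bigr)$, you rightly observe that $\prod_{\alpha\in\od\Phi^+}(e^{\alpha/2}+e^{-\alpha/2})$ is $W$-invariant (each factor depends only on $\{\alpha,-\alpha\}$ and $W$ permutes $\od\Phi$), that $e^{\rho_0}\prod_{\ev\Phi^+}(1-e^{-\alpha})$ is the $W$-anti-invariant Weyl denominator for $\ev\ga$, and that $\la+\rho_0$ is $W$-regular for $\la\in P^+$. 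Together these give $a_{\sigma\cdot\la}=\sgn(\sigma)$ \emph{provided} the support of the Verma expansion of $\ch L(\la)$ lies in $W\cdot\la$.

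That support statement is exactly where the genuine content sits, and your write-up treats it as input rather than output. You correctly diagnose why: for $\gl(m|n)$ the Kac module $\wedge(\ga_{-1})\otimes L^0(\la)$ sidesteps linkage entirely, but for $\osp$ the even subalgebra is not a Levi subalgebra, there is no compatible $\Z$-grading, and one is forced onto the super Harish--Chandra theorem (Kac, Sergeev). That theorem --- $\chi_\mu=\chi_\la$ only when $\mu+\rho\in W(\la+\rho)$ up to translations along isotropic odd roots orthogonal to $\la+\rho$, which typicality eliminates --- is a substantial input, harder for $\osp$ than for $\gl$ precisely because of the non-isotropic odd roots when $\ell=2m+1$. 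So your proposal is an honest reduction to that result rather than a self-contained proof, and the alternative Shapovalov route you sketch is likewise a program (you would need to define and compute an odd Shapovalov determinant for a non-Levi parabolic of $\osp$, not merely assert its leading factor). One clarification worth adding: Definition~\ref{def:typ} ranges over \emph{all} of $\od\Phi^+$, including the non-isotropic $\delta_i$; for $\la\in P^+$ those factors are automatically nonzero, so the typicality hypothesis bites only on isotropic roots, which is what your linkage step actually uses --- say so explicitly, since otherwise the match between Definition~\ref{def:typ} and the HC-theoretic notion of typicality looks like an unexamined identification.
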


\begin{xca}\label{xca:osp:grading}
Recall that $\mf{so}(\ell)$ may be identified with
$\wedge^2(\C^\ell)$, and more generally, $\osp(\ell | 2n)$ may be
identified with $\wedge^2 (\C^{\ell | 2n}) = \wedge^2 (\C^{\ell})
\oplus (\C^\ell \otimes \C^{2n}) \oplus S^2 (\C^{2n})$. Now using
the invariant bilinear form to identify $\C^{\ell | 2n}$ with
$\C^{\ell |0}\oplus\C^{0|n}\oplus\C^{0|n*}$, we define a
$\Z$-gradation on $\C^{\ell | 2n}$ by setting $\text{deg}v=0$ for
$v\in\C^{\ell |0}$, $\text{deg}v=1$ for $v\in \C^{0|n}$, and
$\text{deg}v=-1$ for $v\in \C^{0|n*}$.  This induces a
$\Z$-gradation on $\wedge^2(\C^{\ell |2n})$ and hence on
$\osp(\ell |2n)$.  Prove that
\begin{align*}
\osp(\ell |2n)=\bigoplus_{i=-2}^2\osp(\ell |2n)_i,
\end{align*}
where $\osp(\ell |2n)_0\cong\mf{so}(\ell)\oplus\gl(n)$; as
$\osp(\ell | 2n)_0$-modules we have $\osp(\ell | 2n)_1\cong
\C^\ell\otimes\C^n$ and $\osp(\ell | 2n)_2\cong \C \otimes
S^2(\C^n)$.
\end{xca}


\section{Schur-Sergeev duality}
\label{sec:Schur}

\subsection{The formulation}

Let $\ga =\glmn$. Let $\{e_i| i \in I(m|n)\}$ be the standard
basis for the natural $\ga$-module $V =\C^{m|n}$.

Then $V^{\otimes d}$ is naturally a $\ga$-module by letting
\begin{align*}
\Phi_d (g).(v_1\otimes v_2 \otimes\ldots \otimes v_d) = &
g.v_1\otimes \cdots \otimes v_d + (-1)^{|g|\cdot|v_1|} v_1\otimes
g.v_2 \otimes\cdots \otimes v_d
 \\ & +\ldots
 +  (-1)^{|g|\cdot(|v_1| +\ldots +|v_{d-1}|)} v_1\otimes  v_2 \otimes\cdots
\otimes g.v_d,
\end{align*}
where $g \in \ga$ and $v_i \in V$ are assumed to be $\Z_2$-homogeneous.

On the other hand, the action of the symmetric group ${\mathfrak S}_d$ on $V^{\otimes d}$
is determined by
\begin{align*}
\Psi_d &((i,i+1)).   v_1\otimes \cdots v_i \otimes v_{i+1} \otimes
\cdots \otimes v_d \\
& = (-1)^{|v_i|\cdot |v_{i+1}|} v_1\otimes \cdots v_{i+1} \otimes
v_{i} \otimes \cdots \otimes v_d, \quad 1\le i \le d-1,
\end{align*}
where $(i,j)$ denotes a transposition in ${\mathfrak S}_d$ and $v_i, v_{i+1}$ are
$\Z_2$-homogeneous.

\begin{lemma}   \label{comm}
 The actions of $(\ga, \Phi_d)$ and $(\mf S_d, \Psi_d)$ on $V^{\otimes d}$
 commute with each other.
Symbolically, we write
$$
\ga \; \stackrel{\Phi_{d}}{\curvearrowright} \; V^{\otimes d} \;
\stackrel{\Psi_{d}}{\curvearrowleft} \; {\mathfrak S}_d.
$$
\end{lemma}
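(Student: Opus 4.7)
The plan is to reduce the commutativity statement to a local check at the level of two tensor factors, where it becomes an entirely combinatorial verification of signs.

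First I would set up the reduction. Since the transpositions $s_i = (i,i+1)$ for $1 \le i \le d-1$ generate $\mathfrak S_d$, it suffices to show that $\Psi_d(s_i)$ commutes with $\Phi_d(g)$ for each $\Z_2$-homogeneous $g \in \ga$. It is enough to verify this on a pure tensor $v_1 \otimes \cdots \otimes v_d$ built from $\Z_2$-homogeneous vectors $v_j \in V$, since these span $V^{\otimes d}$. Writing $\Phi_d(g) = \sum_{j=1}^d \Phi_d^{(j)}(g)$, where
\[
\Phi_d^{(j)}(g)(v_1\otimes\cdots\otimes v_d) = (-1)^{|g|(|v_1|+\cdots+|v_{j-1}|)}\, v_1\otimes\cdots\otimes g.v_j\otimes\cdots\otimes v_d,
\]
one sees that for $j \notin \{i,i+1\}$ the operator $\Phi_d^{(j)}(g)$ manifestly commutes with $\Psi_d(s_i)$: both sides produce the same pure tensor up to the same sign, because $s_i$ only swaps the parities appearing in positions $i$ and $i+1$ while $g$ acts away from these positions (and the parity sum $|v_1|+\cdots+|v_{j-1}|$ is preserved under swapping $|v_i|$ and $|v_{i+1}|$ whenever $j \notin\{i,i+1\}$, while for $j<i$ the prefactor is unaffected entirely).

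Second, the remaining content lies in the contribution $\Phi_d^{(i)}(g) + \Phi_d^{(i+1)}(g)$. Here the common outer prefactor $(-1)^{|g|(|v_1|+\cdots+|v_{i-1}|)}$ can be pulled out on either side of the equation, and the problem collapses to the two-factor case: verify that on $V \otimes V$ the operator
\[
\Phi_2(g)(u \otimes w) = g.u\otimes w + (-1)^{|g|\cdot|u|}\, u\otimes g.w
\]
commutes with the graded swap $\Psi_2(s_1)(u\otimes w) = (-1)^{|u|\cdot|w|}\, w\otimes u$, for all homogeneous $g, u, w$. This is a short direct calculation: one expands both $\Psi_2(s_1)\Phi_2(g)(u\otimes w)$ and $\Phi_2(g)\Psi_2(s_1)(u\otimes w)$, and checks that in each of the two resulting monomials the total sign agrees. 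For the monomial $w\otimes g.u$ the signs $(-1)^{|u|\cdot|w|+|g|\cdot|w|}$ agree on both sides, and for $g.w\otimes u$ the signs $(-1)^{|g|\cdot|u|+|u|\cdot|w|+|g|\cdot|u|} = (-1)^{|u|\cdot|w|}$ agree.

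The main obstacle, such as it is, is purely bookkeeping: correctly tracking the signs produced by the Koszul/sign rule when reindexing or interchanging a homogeneous element $g$ past a string of homogeneous vectors. The crucial observation that makes this manageable is that once the action is expressed in the derivation form $\Phi_d = \sum_j \Phi_d^{(j)}$, the verification is entirely local at the positions $i, i+1$ acted on by $s_i$, and everything else produces matching prefactors on the two sides.
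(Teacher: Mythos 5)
Your proof is correct, and the paper itself leaves this as an exercise (Exercise immediately following the lemma), so there is no authored proof to compare against. Your reduction to the generating transpositions $s_i$, the decomposition $\Phi_d = \sum_j \Phi_d^{(j)}$, and the collapse to the two-factor sign check is exactly the natural verification one would write, and the sign bookkeeping in both the $j \notin \{i,i+1\}$ cases and the two-factor case is carried out correctly.
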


\begin{xca}
 Verify Lemma~\ref{comm}.
\end{xca}

We write $\la \vdash d$ for a partition $\la$ of $d$.
Denote by $\Pdmn$ the set of all $(m|n)$-hook partitions of size $d$ and let
\begin{equation*}
\Pmn =\cup_{d \ge 0} \Pdmn =\{\la \mid \la \text{ are partitions
with }  \la_{m+1} \le n\}.
\end{equation*}
Also set $\Pdm =\mc P(d, m|0)$. We denote by $L(\la^\natural)$ for
$\la \in \Pmn$ the simple $\ga$-module of highest weight
$\la^\natural$ with respect to the standard Borel subalgebra. For
a partition $\la \vdash d$, we denote by $S^\la$ the Specht module
of $\mathfrak S_d$. For example, $S^{(d)}$ is the trivial module,
and $S^{(1^d)}$ is the sign module $\sgn$. The following
superalgebra analog of Schur duality was due to Sergeev \cite{Sv}
(also cf. \cite{BeR}).

\begin{theorem} [Schur-Sergeev duality] \label{Sergeev}
\begin{enumerate}
\item The images $\Phi_d$ and $\Psi_d$, $\Phi_d(U(\ga))$ and
$\Psi_d(\C {\mathfrak S}_d)$, satisfy the double centralizer
property, i.e.
\begin{align*}
\Phi_{d}(U(\ga))=&\End_{{\mathfrak S}_d}(V^{\otimes d}),\\
&\End_{U(\ga)}(V^{\otimes d})
= \Psi_d(\C {\mathfrak S}_d).
\end{align*}

\item As $U(\glmn) \otimes \C \mf S_d$-module, we have
$$
V^{\otimes d} \cong \bigoplus_{\la \in \Pdmn} L(\la^\natural)
\otimes S^\la.
$$
\end{enumerate}
\end{theorem}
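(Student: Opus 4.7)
The overall plan is to establish the double centralizer property (1) first, then read off the decomposition in (2) from the semisimplicity of $\C\mathfrak S_d$, and finally identify each isotypic $\ga$-summand as a specific $L(\la^\natural)$ via an explicit highest-weight vector. The central technical step is (1), from which (2) follows by standard semisimple algebra combined with a character/highest-weight identification.

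For the double centralizer, I would first identify
\[
\End_{\mathfrak S_d}(V^{\otimes d}) \cong \bigl( \End(V)^{\otimes d}\bigr)^{\mathfrak S_d},
\]
where the $\mathfrak S_d$-action on $\End(V)^{\otimes d}$ is the signed permutation dictated by the sign rule applied to the $\mathfrak S_d$-action on $V^{\otimes d}$. The invariants on the right are precisely the supersymmetric $d$-tensors in $\gl(V) = \End(V)$, i.e.\ the image of $S^d(\gl(V))$ inside $\End(V)^{\otimes d}$ under the natural (super)symmetrization map. Next, by the PBW theorem, $\gr^F U(\ga) \cong S(\ev \ga) \otimes \wedge(\od \ga) \cong S(\ga)$ in the super sense, and a super-polarization identity
\[
\Phi_d(g_1 g_2 \cdots g_d) \equiv \sum_{\sigma \in \mathfrak S_d} (\pm) \, g_{\sigma(1)} \otimes \cdots \otimes g_{\sigma(d)} \pmod{F^{d-1}}
\]
(with signs determined by the sign rule) shows that the image of $F^d U(\ga)$ under $\Phi_d$ hits every supersymmetric $d$-tensor modulo the image of $F^{d-1}U(\ga)$. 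An induction on filtration degree then yields $\Phi_d(U(\ga)) = \End_{\mathfrak S_d}(V^{\otimes d})$, and the classical double centralizer theorem (applied to the semisimple algebra $\C\mathfrak S_d$) produces both the reverse identity $\Psi_d(\C\mathfrak S_d) = \End_{U(\ga)}(V^{\otimes d})$ and a multiplicity-free decomposition
\[
V^{\otimes d} \;=\; \bigoplus_{\la} W_\la \otimes S^\la,
\]
where each nonzero $W_\la$ is an irreducible $\ga$-module and the sum is over partitions $\la \vdash d$.

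It remains to pin down which $\la$ occur and to identify $W_\la$. Computing the formal character of $V$ as $\sum_{i=1}^m x_i + \sum_{j=1}^n y_j$ with $x_i = e^{\delta_i}$ and $y_j = e^{\ep_j}$, one gets $\ch V^{\otimes d} = (x_1 + \cdots + x_m + y_1 + \cdots + y_n)^d$, and the standard expansion into hook Schur functions gives $\sum_{\la \vdash d} hs_\la(x;y)\, \dim S^\la$. Since $hs_\la(x;y)$ vanishes precisely when $\la$ fails to be an $(m|n)$-hook partition, the sum is effectively over $\la \in \Pdmn$, and comparison with the abstract decomposition forces $\ch W_\la = hs_\la(x;y)$. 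To see $W_\la \cong L(\la^\natural)$, I would fill the diagram of $\la$ with entries from $I(m|n)$ using $\ov i$ in row $i$ of the first $m$ rows and using $j$ in column $j$ of the part of $\la$ that sits strictly below row $m$, apply the Young symmetrizer $c_\la$ to the tensor $e_{i_1} \otimes \cdots \otimes e_{i_d}$ labeled by this filling, and check that the resulting (nonzero) vector is annihilated by $\n^+$ and has $\h$-weight $\la^\natural$; irreducibility of $W_\la$ then forces the identification.

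The principal obstacle is the double-centralizer surjection, where every step of the polarization argument must be made consistent with the sign rule---a source of subtle sign errors that does not arise in the classical setting. A secondary but nontrivial issue is the combinatorial input that only hook partitions appear: this relies on the structure theory of supersymmetric functions and hook Schur functions, which, although classical, is not lightweight. An alternative that avoids direct polarization is to first carry out the character computation, use it to extract an abstract multiplicity-free decomposition of $V^{\otimes d}$ into irreducibles indexed by $\Pdmn$, and then match dimensions on both sides of $\Phi_d(U(\ga)) \subseteq \End_{\mathfrak S_d}(V^{\otimes d})$; but this alternative still requires, at the end of the day, some a priori control (e.g.\ via a highest-weight-vector existence argument) to conclude that each $W_\la$ is simple.
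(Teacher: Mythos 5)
Your treatment of part (1) is essentially the paper's: the paper reduces $\End_{\mathfrak S_d}(V^{\otimes d})$ to $S^d(\End V)$ and asserts "with extra work" that $\Phi_d(U(\ga))$ fills it; your super-polarization step is exactly that extra work spelled out, and the appeal to the semisimplicity of $\C\mathfrak S_d$ to get the reverse centralizer identity is the same.

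Part (2), however, has a genuine gap at the "comparison" step. After obtaining the abstract multiplicity-free decomposition $V^{\otimes d} \cong \bigoplus_\la W_\la \otimes S^\la$, you compute only the $\ga$-character $\ch V^{\otimes d} = (\sum x_i + \sum y_j)^d = \sum_\la hs_\la(x;y)\,f^\la$ and assert that comparing this with $\sum_\la \ch W_\la \cdot f^\la$ "forces $\ch W_\la = hs_\la$." That inference is not valid: the coefficients $f^\la = \dim S^\la$ are positive integers, not linearly independent vectors, so matching two expressions of the form $\sum_\la (\text{char})\cdot f^\la$ does not pin down the individual characters. To make a character argument work you would need the full $\ga\times\mathfrak S_d$-bicharacter $\tr_{V^{\otimes d}}(\mathrm{diag}(x;y)\cdot\sigma)$, identify it with the super power sum $p_{\mathrm{ct}(\sigma)}(x;y)$, and then extract $\ch W_\la$ by orthogonality against $\chi^\la$ — this is a super Frobenius characteristic computation that your write-up does not carry out, and it also requires independently knowing (or proving) a super Frobenius formula expressing $hs_\la$ as $\langle p_\bullet,\chi^\la\rangle$. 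The paper avoids this circle entirely: it decomposes $V^{\otimes d}$ into $\h$-weight spaces $W_{\nu|\mu}$, identifies each weight space as an induced $\mathfrak S_d$-module $\ind_{\mathfrak S_{\nu|\mu}}^{\mathfrak S_d}(1_\nu\otimes\sgn_\mu)$, counts multiplicities of $S^\la$ via hook tableaux (proving $K_{\la,\nu|\mu}=\#\mc{HT}(\la,\nu|\mu)$ by Pieri), and reads off both the index set $\Pdmn$ and the highest weight $\la^\natural$ directly from the tableau combinatorics — no hook Schur function identities are assumed. Your proposed Young-symmetrizer construction of an explicit highest-weight vector is a plausible substitute for the paper's weight-maximality argument, but it still leaves the nonvanishing of $c_\la(e_{i_1}\otimes\cdots\otimes e_{i_d})$ to check, and more importantly does not repair the earlier character-comparison gap.
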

We refer to a $U(\gl(m|n))\otimes\C\mf{S}_d$-module as a $(\glmn, {\mathfrak
S}_d)$-module. Similar conventions also apply to similar setups below.

\begin{remark}   \label{rem:SchurD}
\begin{enumerate}

\item For $n=0$, the above Sergeev duality reduces to the usual Schur duality. If in addition
    $d=2$, $(\C^m)^{\otimes 2} =S^2 (\C^m) \oplus \wedge^2 (\C^m).$ This fits well
    with the well-known fact that as $\gl(m)$-modules $S^2 (\C^m)$ and $\wedge^2
    (\C^m)$ are, respectively, irreducible of highest weights $2\Lambda_1$ and
    $\Lambda_2$, where $\Lambda_i$ denotes the $i$th fundamental weight.

\item For $m=0$, $\la^\natural =\la'$ (the conjugate partition), and $S^\la =S^{\la'}
    \otimes \sgn$. In this case, the Sergeev duality reduces to the version of Schur
    duality twisted by the sign representation of ${\mathfrak S}_d$, i.e., as
    $(\gl(m), {\mathfrak S}_d)$-module,
$$(\C^m)^{\otimes d} \cong \bigoplus_{\mu \in \Pdm}
L(\mu) \otimes S^{\mu'}.
$$

\item If $d \le mn +m+n$, then $\Pdmn =\{\la \vdash d\}$, and
every simple ${\mathfrak S}_d$-module appears in the Sergeev
duality decomposition.

\item For $d=2$, the Sergeev duality reduces to the decomposition
    $(\C^{m|n})^{\otimes 2} =S^2 (\C^{m|n}) \oplus \wedge^2 (\C^{m|n})$, where $S^2$
    and $\wedge^2$ are understood in the super sense. In particular, as an ordinary
    vector spaces,
\begin{align*}
S^2 (\C^{m|n}) = S^2 (\C^{m}) \oplus (\C^m \otimes \C^n) \oplus
\wedge^2 (\C^{n}).
\end{align*}
\end{enumerate}
\end{remark}
\subsection{Proof of Theorem~\ref{Sergeev}}

Set $\mathcal A:=\Phi_d(U(\ga))$ and $\mathcal B :=\Psi(\C {\mathfrak S}_d)$. It is clear
that
$$
\mathcal A \subset  \End_{{\mathfrak S}_d} (V^{\otimes d})  =
(\End (V^{\otimes d}))^{{\mathfrak S}_d} \cong S^d (\End V),
$$
where $S^d(-)$ denotes the $d$th symmetric tensor. With extra work, one proves that $S^d
(\End V) \subset \mathcal A$ (the superalgebra generalization does not cause any extra
difficulty). Hence $\Phi_{d}(U(\ga))= \End_{{\mathfrak S}_d}(V^{\otimes d})$.

Since $\mathcal B =\C {\mathfrak S}_d$ is a semisimple algebra, it follows (cf.
\cite[Theorem~3.3.7]{GW}) that $\End_{U(\ga)}(V^{\otimes d}) = \Psi_d(\C {\mathfrak
S}_d).$ This proves (1).

Let $W =V^{\otimes d}$. It follows from the double centralizer property and the
semisimplicity of $\C \mf S_d$ that we have a
multiplicity-free decomposition of  the $(\glmn, \mf S_d)$-module $W$:
\begin{equation*}
W \equiv V^{\otimes d} \cong \bigoplus_{\la \in \mathfrak
P(d,m|n)} L^{[\la]} \otimes S^\la,
\end{equation*}
where $L^{[\la]}$ is some simple $\glmn$-module associated to $\la$, whose highest weight
(with respect to the standard Borel) is to be determined. Also to be determined is the
index set $\mathfrak P(d,m|n) =\{ \la \vdash d \mid L^{[\la]} \neq 0\}$.

First we need to prepare some notations.

Let $\text{Cp}(m|n)$ be the set of  pairs $\nu|\mu$ of compositions $\nu
=(\nu_{1},\ldots, \nu_{m})$ of length $\leq m$ and $\mu =(\mu_1,\ldots, \mu_n)$ of length
$\leq n$, and let
$$
\text{Cp}(d,m|n) =\{ \nu|\mu \in \text{Cp}(m|n) \mid \sum_i \nu_i
+\sum_j \mu_j =d\}.
$$
We have the following weight space decomposition (with respect to
the Cartan subalgbra of diagonal matrices $\h \subset \glmn$):
$$
W =\bigoplus_{\nu|\mu \in \text{Cp} (d,m|n)} W_{\nu|\mu},
$$
where $W_{\nu|\mu}$ has a linear basis $e_{i_1} \otimes \ldots \otimes e_{i_d}$, with the
indices satisfying the following equality of sets:
\begin{equation} \label{munu basis}
\{i_1, \ldots, i_d\} =\{\underbrace{\ov 1,\ldots, \ov
1}_{\nu_{1}}, \ldots, \underbrace{\ov m,\ldots, \ov m}_{\nu_{m}}
; \underbrace{1,\ldots, 1}_{\mu_1}, \ldots, \underbrace{n,\ldots,
n}_{\mu_n} \}.
\end{equation}
Let $\mf S_{\nu|\mu} :=\mf S_{\nu_{1}} \times \ldots \times \mf
S_{\nu_{m}} \times  \mf S_{\mu_1} \times \ldots \times \mf
S_{\mu_n}$. The span of the vector $e_{\nu|\mu}:=e_{\ov
1}^{\otimes\nu_{1}} \otimes \ldots \otimes e_{\ov
m}^{\otimes\nu_{m}} \otimes e_1^{\otimes \mu_1} \otimes \ldots
\otimes e_n^{\otimes \mu_n}$ can be identified with the
$\mf{S}_{\nu|\mu}$-module $1_\nu\otimes \text{sgn}_\mu$. Since
$\mf{S}_d e_{\nu|\mu}$ spans $W_{\nu|\mu}$ we have a surjective
$\mf{S}_d$-homomorphism from
$\text{Ind}_{\mf{S}_{\nu|\mu}}^{\mf{S}_d}(1_\nu\otimes\text{sgn}_\mu)$
onto $W_{\nu|\mu}$ by Frobenius Reciprocity. By counting the
dimensions we have an $\mf{S}_d$-isomorphism:
$$
W_{\nu|\mu} \cong \ind_{\mathfrak S_{\nu|\mu}}^{\mathfrak S_d} (1_\nu \otimes
\sgn_{\mu}). $$ Let us denote the decomposition of $W_{\nu|\mu}$ into irreducibles by
$$
W_{\nu|\mu} =\bigoplus_{\la} K_{\la, \nu|\mu} S^\la, \quad \text{ for }  K_{\la, \nu|\mu} \in \Z_+.
$$

Let $\la$ be a partition which is identified with its Young diagram.
Recall $I(m|n)$ is totally ordered by \eqref{eq:order}. A {\em hook tableau}
$T$ of shape $\la$, or an {\em
hook $\la$-tableau} $T$, is an assignment of an element in $I(m|n)$ to
each box of the Young diagram $\la$ satisfying the following conditions:
\begin{enumerate}
\item The numbers are weakly increasing along each row and column.

\item The numbers from $\{\ov 1, \ldots, \ov m\}$ are strictly increasing along each
    column.

\item The numbers from $\{1, \ldots, n\}$ are strictly increasing along each row.
\end{enumerate}
Such a $T$ is said to have {\em content $\nu|\mu \in
\text{Cp}(m|n)$} if $\bar i \in I(m|0)$ appears $\nu_i$ times and
$j \in I(0|n)$ appears $\mu_j$ times. Denote by $\mc{HT}(\la,
\nu|\mu)$ the set of hook $\la$-tableaux of
content $\nu|\mu$.

\begin{lemma}
 We have $K_{\la, \nu|\mu} = \# \mc{HT} (\la, \nu|\mu)$.
\end{lemma}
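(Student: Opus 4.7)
The plan is to compute $K_{\lambda,\nu|\mu}$ via the Frobenius characteristic map and then match the resulting expression combinatorially with the count of hook tableaux by splitting any hook $\lambda$-tableau into a ``barred'' and an ``unbarred'' piece.

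First, since the excerpt has already identified $W_{\nu|\mu}$ with $\ind_{\mf S_{\nu|\mu}}^{\mf S_d}(1_\nu \otimes \sgn_\mu)$, Frobenius reciprocity (or equivalently, the characteristic map) gives
\[
K_{\lambda,\nu|\mu} = \langle h_\nu\, e_\mu,\, s_\lambda\rangle,
\]
where $h_\nu=h_{\nu_1}\cdots h_{\nu_m}$ and $e_\mu=e_{\mu_1}\cdots e_{\mu_n}$, and the pairing is the Hall inner product on the ring of symmetric functions. Using the classical Pieri-style identities $h_\nu=\sum_\alpha K_{\alpha,\nu}\,s_\alpha$ and $e_\mu=\sum_\beta K_{\beta',\mu}\,s_\beta$, together with the Littlewood--Richardson rule $s_\alpha s_\beta=\sum_\lambda c^\lambda_{\alpha\beta}\,s_\lambda$, I get
\[
K_{\lambda,\nu|\mu} \;=\; \sum_{\alpha,\beta} K_{\alpha,\nu}\, K_{\beta',\mu}\, c^\lambda_{\alpha\beta}
\;=\; \sum_{\alpha\subseteq\lambda} K_{\alpha,\nu}\,\langle s_{\lambda/\alpha}, e_\mu\rangle,
\]
and $\langle s_{\lambda/\alpha}, e_\mu\rangle$ counts fillings of the skew shape $\lambda/\alpha$ whose conjugate is semistandard of content $\mu$, i.e.\ fillings of $\lambda/\alpha$ with entries from $\{1,\ldots,n\}$ that are strictly increasing along rows and weakly increasing along columns, with $j$ occurring $\mu_j$ times.

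Next, I will produce a bijection that realizes this sum on the hook-tableau side. Given a hook $\la$-tableau $T$ of content $\nu|\mu$, the total order \eqref{eq:order} forces every barred entry ($\bar i\in I(m|0)$) to appear strictly to the upper-left of every unbarred entry ($j\in I(0|n)$) within each row and column; consequently the set of cells occupied by barred entries is a Young subdiagram $\alpha\subseteq\lambda$. The barred piece is then, by definition, a semistandard Young tableau of shape $\alpha$ and content $\nu$ (contributing $K_{\alpha,\nu}$), while the unbarred piece is a filling of $\lambda/\alpha$ that is strictly increasing along rows, weakly increasing along columns, with content $\mu$. The compatibility of the two pieces along the boundary of $\alpha$ is automatic, precisely because barred entries are smaller than all unbarred ones. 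This bijection therefore gives
\[
\#\mc{HT}(\lambda,\nu|\mu) \;=\; \sum_{\alpha\subseteq\lambda} K_{\alpha,\nu}\,\langle s_{\lambda/\alpha},e_\mu\rangle,
\]
and combining with the displayed formula for $K_{\lambda,\nu|\mu}$ completes the proof.

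The routine part will be the Pieri/LR expansion and Frobenius reciprocity; the only thing that requires care — and what I regard as the main (mild) obstacle — is verifying that the decomposition of a hook tableau into its barred part (a genuine sub-Young-diagram $\alpha$) and its unbarred part (a row-strict, column-weak filling of $\lambda/\alpha$) is a clean bijection, with no boundary conditions to check beyond those automatically guaranteed by the order $\ov i<0<j$. Once this is in place, matching with the symmetric-function computation is immediate.
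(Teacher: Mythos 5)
Your proof is correct, but it takes a genuinely different route from the paper's. The paper proves the identity by a double induction: first setting $\mu=\emptyset$ and inducting on $\ell(\nu)$, identifying hook tableaux with nested chains of partitions $\emptyset=\la^0\subset\cdots\subset\la^r=\la$ and applying the Pieri rule at each step; then inducting on $\ell(\mu)$ via the conjugate Pieri rule. You instead compute in one shot through the Frobenius characteristic map, writing $K_{\la,\nu|\mu}=\langle h_\nu e_\mu, s_\la\rangle$, expanding via Kostka numbers and the Littlewood--Richardson rule to get $\sum_{\alpha\subseteq\la}K_{\alpha,\nu}\langle s_{\la/\alpha},e_\mu\rangle$, and matching this against a direct bijection that splits a hook tableau into its barred piece (an ordinary SSYT of some subshape $\alpha\subseteq\la$, valid by conditions (1)--(2)) and its unbarred piece (a row-strict, column-weak filling of $\la/\alpha$, valid by conditions (1) and (3)), with the observation that $\ov i<0<j$ makes the cross-boundary compatibility automatic. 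Both approaches are sound: the paper's Pieri induction is more elementary, avoiding the LR rule altogether and building the bijection one layer at a time; your argument is heavier (it invokes LR) but more conceptual, and the decomposition into barred/unbarred pieces is precisely the combinatorial counterpart of the hook-Schur definition $hs_\la(\x,\y)=\sum_{\mu\subseteq\la}s_\mu(\x)s_{\la'/\mu'}(\y)$ in \eqref{mn:hook}, so it dovetails nicely with the later development in the paper.
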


\begin{proof}
Recall that $ \ind_{\mathfrak S_{\nu|\mu}}^{\mathfrak S_d} (1_\nu \otimes
\sgn_{\mu}) \cong \bigoplus_{\la} K_{\la, \nu|\mu} S^\la$.

First assume that $\mu =\emptyset$, and we prove the formula by induction on the length
$r =\ell (\nu)$. A hook (=semistandard) tableau $T$ of shape $\la$ and content $\nu$ gives
rise to a sequence of partitions $\emptyset =\la^0 \subset \la^1 \subset \ldots \subset
\la^r =\la$ such that $\la^i$ has the shape given by the parts of $T$ with entries $\le i$,
and $\la^i /\la^{i-1}$ has $\nu_i$ boxes for each $i$. This sets up a bijection between
$\mc{HT}(\la,\nu)$ and the set of such sequences of partitions. Denote $d_1 = d-\nu_r$ and
$\wt{\nu} =(\nu_1, \ldots, \nu_{r-1})$. We have $\text{Ind}_{\mf S_{\wt{\nu}}}^{\mf
S_{d_1}}1_{\mf{S}_{\tilde{\nu}}} \cong \bigoplus_{\rho \vdash d_1} K_{\rho, \wt{\nu}}
S^\rho$, where $K_{\rho, \wt{\nu}} = \# \mc{HT}(\rho, \wt{\nu})$ by induction hyperthesis. Now
the induction step is simply the Piere's rule (c.f.~\cite[Chapter 1 (5.16)]{Mac}): for a
partition $\rho \vdash d_1$,
$$
\text{Ind}_{\mf S_{d_1} \times \mf S_{\nu_r}}^{\mf S_d}
(S^\rho \otimes 1_{\nu_r}) \cong
\bigoplus_{\la} S^\la,
$$
where $\la$ is such that $\la/\rho$ is a horizontal strip of $\nu_r$ boxes.

Then using the above special case as the initial step, we complete the proof in the general case by
induction on the length of $\mu$, in which the induction step is exactly the conjugated
Piere's rule.
\end{proof}

\begin{lemma} \label{K=0}
Let $\la \vdash d$ and $\nu|\mu \in \text{Cp}(d,m|n)$. Then $K_{\la,\nu|\mu} =0$ unless
$\la \in \Pdmn$.
\end{lemma}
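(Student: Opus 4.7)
The plan is to exploit the combinatorial identification $K_{\la,\nu|\mu} = \#\mc{HT}(\la,\nu|\mu)$ established in the previous lemma and show directly that no hook $\la$-tableau exists at all when $\la \notin \Pmn$, regardless of the content $\nu|\mu$. In other words, it suffices to argue that the very shape of $\la$, rather than the multiplicities of the content, obstructs the existence of any such $T$.

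First I would examine the columns of an arbitrary hook $\la$-tableau $T$. Entries are weakly increasing along each column by rule 1, and the total order on $I(m|n)$ places every barred symbol $\ov i$ below every unbarred symbol $j$. Hence each column of $T$ splits as a (possibly empty) block of barred entries at the top followed by a block of unbarred entries below. Rule 2 forces the barred block to be strictly increasing, so it uses at most $m$ distinct values and therefore has length at most $m$.

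Next, suppose that $\la_{m+1} \geq 1$, so row $m+1$ is nonempty. For each $1 \leq j \leq \la_{m+1}$, column $j$ has length $\geq m+1$, and by the previous paragraph its entry in row $m+1$ must be unbarred. Thus every entry in row $m+1$ lies in $\{1,\dots,n\}$. Applying rule 3, those entries are strictly increasing and drawn from an $n$-element set, whence $\la_{m+1} \leq n$. This is precisely the $(m|n)$-hook condition, so $\la \in \Pdmn$.

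Consequently, if $\la \vdash d$ fails to lie in $\Pdmn$, then $\mc{HT}(\la,\nu|\mu) = \emptyset$ and hence $K_{\la,\nu|\mu} = 0$, as claimed. The only step that really needs verification is the column-block observation in the first paragraph, but this is a direct consequence of the total order \eqref{eq:order} together with the two strictness axioms; I do not expect any genuine obstacle.
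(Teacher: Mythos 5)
Your proof is correct and takes essentially the same route as the paper: both reduce to the identity $K_{\la,\nu|\mu}=\#\mc{HT}(\la,\nu|\mu)$ and then derive $\la_{m+1}\le n$ from the tableau axioms, using condition (2) on columns to force the entries of row $m+1$ to be unbarred and condition (3) to bound the length of that row by $n$. Your column-block observation is just a slightly more explicit version of the paper's one-line application of condition (2) to the first column (combined with weak increase along the row), so there is nothing further to add.
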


\begin{proof}
By the identity $K_{\la, \nu|\mu} = \# \mc{HT} (\la, \nu|\mu)$, it suffices to prove that if a
hook $\la$-tableau $T$ of content $\nu|\mu$ exists, then $\la_{m+1}
\le n$.

By applying the hook tableau condition (2) to the first column of $T$, we see that
the first entry $k \in I(m|n)$ in row $(m+1)$ satisfies $k>0$. Applying the hook tableau
condition (3) to the $(m+1)$st row, we conclude that $\la_{m+1} \le n$.
\end{proof}

Lemma~\ref{K=0} implies that $\mf P(d,m|n) \subset \Pdmn$. On the other hand, given $\la
\in \Pdmn$, clearly a hook $\la$-tableau exists, e.g., we can fill in
the numbers $\ov 1,\ldots, \ov m$ on the first $m$ rows of $\la$ row by row downward, and
then for the (possibly) remaining rows of $\la$, we fill in the numbers $1, \ldots, n$
column by column from left to right. This distinguished $\la$-tableau will
be denoted by $T_\la$. Hence, we have proved that
$$
\mf P(d,m|n) = \Pdmn.
$$

For a given $\la \in \Pdmn$, we have $L^{[\la]} =\bigoplus_{\nu|\mu \in \text{Cp}(d,m|n)} L^{[\la]}_{\nu|\mu}$.
Among all the contents of hook $\la$-tableaux, the one for $T_\la$
corresponds to a highest weight (by the three hook tableau conditions). Hence, we
conclude that $L^{[\la]} =L(\la^\natural)$, the simple $\ga$-module of highest weight
$\la^\natural$. This completes the proof of Theorem~\ref{Sergeev}.

%
%
\subsection{}

Clearly, the character of $L(\la^\natural)$
$$
\ch L(\la^\natural) =\sum_{\nu|\mu \in \text{Cp}(m|n)} \dim
L(\la^\natural)_{\nu|\mu} \prod_{i,j} x_i^{\nu_i} y_j^{\mu_j}
$$
is a polynomial which is symmetric in $\x =(x_1,\ldots, x_m)$ and $\y =(y_1,\ldots, y_n)$,
respectively. Denote by $m_\nu(\x)$ the
monomial symmetric polynomial associated to a partition $\nu$.

The following can be read off from the above proof of
Theorem~\ref{Sergeev}.
\begin{corollary}   \label{ch=tab}
The character of $L(\la^\natural)$ is given by:
$$
\ch L(\la^\natural) =\sum_{\nu|\mu \in \text{Cp}(d,m|n)} \# \mc{HT}(\la,
\nu|\mu) \, m_{\nu} (\x) m_{\mu} (\y),
$$
where $\nu$ and $\mu$ above are partitions.
\end{corollary}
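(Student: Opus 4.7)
The plan is to read off the desired identity by comparing two decompositions of the weight space $W_{\nu|\mu}$ of $W = V^{\otimes d}$ that already appeared in the proof of Theorem~\ref{Sergeev}.

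First, I would start from the multiplicity-free decomposition
\[
W \cong \bigoplus_{\la \in \Pdmn} L(\la^\natural) \otimes S^\la
\]
as a $(\glmn, \mf S_d)$-module. Since the actions of $\h \subset \glmn$ and $\C \mf S_d$ commute, restricting to the $\nu|\mu$-weight space gives an $\mf S_d$-module isomorphism
\[
W_{\nu|\mu} \cong \bigoplus_{\la \in \Pdmn} L(\la^\natural)_{\nu|\mu} \otimes S^\la,
\]
so $\dim L(\la^\natural)_{\nu|\mu}$ equals the multiplicity of $S^\la$ in $W_{\nu|\mu}$.

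Next, I would use the other description of $W_{\nu|\mu}$ already established during the proof of Theorem~\ref{Sergeev}, namely
\[
W_{\nu|\mu} \cong \ind_{\mf S_{\nu|\mu}}^{\mf S_d}(1_\nu \otimes \sgn_\mu) \cong \bigoplus_{\la} K_{\la, \nu|\mu} S^\la,
\]
together with the combinatorial identity $K_{\la, \nu|\mu} = \# \mc{HT}(\la, \nu|\mu)$ proved via the iterated Pieri and conjugate Pieri rules. Comparing multiplicities of $S^\la$ in the two decompositions yields
\[
\dim L(\la^\natural)_{\nu|\mu} = \# \mc{HT}(\la, \nu|\mu)
\]
for every composition pair $\nu|\mu \in \text{Cp}(d,m|n)$.

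Finally, I would substitute this into the tautological expansion of $\ch L(\la^\natural)$ in terms of monomial weight vectors. Because the character is symmetric in $\x$ and in $\y$ separately, one may collect compositions with common partition rearrangements; equivalently, for each pair of partitions $(\nu,\mu)$ the quantity $\# \mc{HT}(\la,\nu|\mu)$ depends only on the multiset of entries and not on the composition order, so the sum over compositions reassembles as
\[
\ch L(\la^\natural) = \sum_{\nu|\mu \in \text{Cp}(d,m|n)} \# \mc{HT}(\la,\nu|\mu) \, m_\nu(\x) m_\mu(\y),
\]
with $\nu, \mu$ partitions as in the statement. There is no real obstacle here beyond being careful that the symmetry of $\# \mc{HT}(\la,\nu|\mu)$ under permutations of $\nu$ and of $\mu$ is precisely what allows the passage from monomials $\x^\nu \y^\mu$ to the monomial symmetric functions $m_\nu(\x) m_\mu(\y)$; this symmetry follows from the $W$-invariance of the character together with the identification of weight multiplicities with hook tableau counts.
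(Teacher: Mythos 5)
Your argument is correct and is exactly what the paper intends when it says the corollary ``can be read off from the above proof of Theorem~\ref{Sergeev}'': comparing the two decompositions of $W_{\nu|\mu}$ gives $\dim L(\la^\natural)_{\nu|\mu} = K_{\la,\nu|\mu} = \#\mc{HT}(\la,\nu|\mu)$, and the $W$-symmetry of the character collects the monomial expansion into $m_\nu(\x)m_\mu(\y)$ over partitions. You have simply made explicit what the paper leaves as an observation.
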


\begin{remark}
The standard basis vectors for $S^2(\C^{m|n})$ are $e_i \otimes e_j +(-1)^{|i| |j|} e_j
\otimes e_i$, where $i,j \in I(m|n)$ satisfy $i \le j <0$, $i<0<j$, or $0<i <j$ (cf.
Remark~\ref{rem:SchurD}~ (4)). They are in bijection with the hook
tableaux of shape $\la =(2)$:
\begin{center}
\hskip 0cm \setlength{\unitlength}{0.25in}
\begin{picture}(2,1)
\put(0,0){\line(1,0){2}} \put(2,0){\line(0,1){1}} \put(0,0){\line(0,1){1}}
\put(0,1){\line(1,0){2}} \put(1,0){\line(0,1){1}} \put(0.5,0.5){\makebox(0,0)[c]{$i$}}
\put(1.5,0.5){\makebox(0,0)[c]{$j$}}
\end{picture}
\end{center}
This is compatible with the identification $S^2(\C^{m|n})\cong L(\glmn,
2\delta_1)$.
\end{remark}

\begin{remark}
We sketch below a more standard argument for the standard Schur duality on $(\C^n)^{\otimes d}$,
which emphasizes the decomposition of $W$ as a $\ga$-module.

Given a composition (or a partition) $\mu$ of $d$ of length $\le n$, we denote by $W_\mu$
the $\mu$-weight space of the $\gl(n)$-module. Clearly $W_\mu$ has a basis
\begin{equation} \label{mu basis}
e_{i_1} \otimes \ldots \otimes e_{i_d}, \quad \text{ where }
\{i_1, \ldots, i_d\} =\{\underbrace{1,\ldots, 1}_{\mu_1}, \ldots,
\underbrace{n,\ldots, n}_{\mu_n} \}.
\end{equation}
As a $(\gl(n), {\mathfrak S}_d)$-module,
$$
W \cong \bigoplus_{\la} L(\la) \otimes U^{\la},
$$
where $U^\la := \Hom_{\gl(n)} (L(\la), W)\cong W_\la^{\n^+}$ (the
space of highest weight vectors in $W$ of weight $\la$). Only $\la
\in \Pdn$ can be as highest weights for $\gl(n)$ which appears in
the decomposition of $(\C^n)^{\otimes d}$, and every such $\la$
indeed appears as it appears as a summand of $\wedge^{\la'_1}
(\C^n) \otimes \wedge^{\la'_2} (\C^n)  \otimes \cdots$.

Note that $\Pdn$ has two interpretations: one as the polynomial weights for $\gl(n)$ and
the other as compositions of $d$. A remarkable fact is that the partial order on weights
induced by the positive roots of $\gl(n)$ coincides with the dominance partial order
$\unrhd$ on compositions.

Since $W =\oplus_\mu W_\mu =\oplus_{\mu,\la : \la \unrhd \mu} L(\la)_\mu \otimes U^\la$,
we conclude that, as $\mathfrak S_d$-module,
$$
W_\mu \cong \bigoplus_{\la \unrhd \mu} \dim L(\la)_\mu U^\la.
$$
On the other hand, $\mathfrak S_d$ acts on the basis of $W_\mu$ \eqref{mu basis}
transitively and the stabilizer of the basis element $e_1^{\otimes \mu_1} \otimes \ldots
\otimes e_n^{\otimes \mu_n}$ is the Young subgroup $\mf S_\mu =\mf S_{\mu_1} \times
\ldots \times \mf S_{\mu_n}$. Therefore we have
$$
W_\mu \cong \ind_{\mathfrak S_\mu}^{\mathfrak S_d} {1_\mu}
=\bigoplus_{\la \unrhd \mu} K_{\la \mu} S^\la,
$$
where $K_{\la \mu}$ is the Kostka number which satisfies $K_{\la \la} =1$.

By the double centralizer property, $U^\la$ has to be an
irreducible $\mathfrak S_d$-module for each $\la$. We compare the
above two interpretations of $W_\mu$ in the special case when
$\mu$ is dominant (i.e., a partition). One by one downward along
the dominance order, this provides the identification $U^\mu
=S^\mu$ for every $\mu$, and moreover, we obtain the well-known
equality
$$
\dim L(\la)_\mu = K_{\la \mu}.
$$
\end{remark}

\begin{definition}
Let $\mu \in \h^*$. The {\em degree of atypicality} of $\mu$ is
the maximum cardinality of a set of pairwise orthogonal  $\alpha
\in \Phi^+_{\bar 1}$ such that $(\mu+\rho,\alpha)=0$.
\end{definition}

The following observation seems to be new.
\begin{proposition}\label{atyp:prop}
Let $\la\in\mc{P}(m|n)$. The degree of atypicality of
$\la^\natural$ equals the minimal number $i$ such that $\la$
contains a partition of rectangular shape $((m-i)^{n-i})$ with
$0\le i\le \min\{m,n\}$.
\end{proposition}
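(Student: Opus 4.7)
The plan is to reduce the atypicality condition to a concrete combinatorial statement about $\lambda$, and then match it against the rectangle-containment criterion.

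First I would write out $\rho = \rho_0 - \rho_1$ for $\glmn$ and compute
\begin{equation*}
(\lambda^\natural + \rho,\; \delta_i - \epsilon_j) = \lambda_i + \nu_j + m + 1 - i - j,
\end{equation*}
so an odd positive root $\delta_i - \epsilon_j$ is a zero root of $\lambda^\natural + \rho$ exactly when $\lambda_i + \nu_j = i + j - m - 1$. Since $\delta_i - \epsilon_j$ and $\delta_{i'} - \epsilon_{j'}$ are orthogonal iff $i\ne i'$ and $j\ne j'$, the degree of atypicality is the maximum number of solutions $(i,j)$ with pairwise distinct $i$-coordinates and pairwise distinct $j$-coordinates.

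The key reduction is that every atypical pair must satisfy $\nu_j = 0$, equivalently $j > \lambda_m$. Indeed, if $\nu_j > 0$ then $\lambda'_j \ge m+1 > i$, so the box $(i,j)$ lies inside $\lambda$; but using $\nu_j = \lambda'_j - m$ the atypicality equation becomes $\lambda_i + \lambda'_j = i + j - 1$, i.e.\ the hook length $h(i,j) = 0$, contradicting $h(i,j)\ge 1$ for a box of $\lambda$. Once $\nu_j = 0$, the equation reduces to $j = j_i := \lambda_i - i + m + 1$. Since $\lambda_i - i$ is strictly decreasing in $i$, the assignment $i\mapsto j_i$ is strictly decreasing, so distinct $i$'s give distinct $j_i$'s automatically; moreover $j_i \ge 1$ and $j_i > \lambda_m$ hold trivially from $0\le\lambda_i$, $\lambda_i\ge\lambda_m$, and $i\le m$. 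The only binding constraint is $j_i \le n$, equivalently $\lambda_i - i \le n - m - 1$. Thus the atypicality equals $|\{i\in\{1,\dots,m\}:\lambda_i - i \le n - m - 1\}|$.

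Finally I would translate this count into the rectangle condition. Let $k$ denote the atypicality. Because $\lambda_i - i$ is strictly decreasing, the above count is $m - i_0 + 1$ where $i_0 = m-k+1$ is the smallest index with $\lambda_i - i \le n-m-1$. Thus
\begin{equation*}
\lambda_{m-k} - (m-k) \ge n - m \quad\text{and}\quad \lambda_{m-k+1} - (m-k+1) \le n - m - 1,
\end{equation*}
i.e.\ $\lambda_{m-k} \ge n - k$ and $\lambda_{m-k+1} \le n-k$. The first inequality says that rows $1,\dots,m-k$ all have length $\ge n-k$, so $\lambda$ contains the rectangle with $m-k$ rows and $n-k$ columns; the second says that $\lambda$ fails to contain the next rectangle with $m-k+1$ rows and $n-k+1$ columns. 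By monotonicity, all rectangles with smaller $i$ fail too. Hence $k$ is the minimal $i$ such that $\lambda$ contains the rectangle $((m-i)^{n-i})$, which is the claim.

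The only delicate step is the key reduction via the hook-length identity; the remainder is monotonicity bookkeeping with $\lambda_i - i$ and its counterpart $j - \nu_j$.
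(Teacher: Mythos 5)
Your proof is correct and complete. The paper does not actually supply an argument for this proposition --- it is stated and then immediately left as an exercise --- so there is no official proof to compare against; but your route (computing the $\rho$-shift explicitly to get $(\la^\natural+\rho,\delta_i-\ep_j)=\la_i+\nu_j+m+1-i-j$, ruling out pairs with $\nu_j>0$ via the positivity of the hook length of the box $(i,j)$, and then counting $i$ with $\la_i-i\le n-m-1$ using the strict monotonicity of $\la_i-i$) is the natural one, and each step checks out, including the implicit observation that the full set of atypical roots is automatically pairwise orthogonal so that the maximum orthogonal family is the whole set.

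Two small remarks. First, your parenthetical ``$\nu_j=0$, equivalently $j>\la_m$'' is not an equivalence: $\nu_j=0$ iff $j>\la_{m+1}$, and one can have $\la_{m+1}<j\le\la_m$. This is harmless because the only implication you actually use later is $j_i>\la_m\Rightarrow j_i>\la_{m+1}\Rightarrow\nu_{j_i}=0$, which is the correct direction. Second, your conclusion identifies the relevant rectangle as the one with $m-k$ \emph{rows} and $n-k$ \emph{columns}, i.e.\ the condition $\la_{m-k}\ge n-k$; read with the standard convention $(a^b)=(a,\dots,a)$ ($b$ parts), the paper's symbol $((m-i)^{n-i})$ would denote the transposed rectangle. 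Your reading is the correct one: it is the only one consistent with the paper's own corollary that $\la^\natural$ is typical iff $\la_m\ge n$ (for instance $m=2$, $n=3$, $\la=(3,3)$ is typical but does not contain three rows of length two), so the proposition's notation should be understood as, or corrected to, $((n-i)^{m-i})$. It would be worth stating this orientation explicitly rather than silently matching it at the end.
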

As a corollary, $\la^\natural$ is typical if and only if $\la_m
\ge n$, as observed earlier in \cite{BeR}.

\begin{xca}
Prove Proposition \ref{atyp:prop}.
\end{xca}

\begin{remark} \label{qn}
There exists another interesting generalization of Schur duality
for the queer Lie superalgebras due to Sergeev \cite{Sv}.
\end{remark}

\subsection{}

Assume that the $\ep\delta$-sequence associated to a Borel
subalgebra $\mf b$ of $\glmn$ (cf. Section~\ref{sec:ep delta}) is
given by a sequence of $d_1$ $\delta$'s, $e_1$ $\ep$'s, $d_2$
$\delta$'s, $e_2$ $\ep$'s, $\ldots, d_r$ $\delta$'s, $e_r$ $\ep$'s
(possibly $d_1 =0$ or $e_r =0$). Associated to an $(m|n)$-hook
Young diagram $\la$, we define a weight $\la^{\mf b} \in \h^*$ as
follows. Take the first $d_1$ {\em row} numbers of $\la$ as the
coefficients of the first $d_1$ $\delta$'s. Denote by $\la^1$ the
Young diagram obtained from $\la$ with the first $d_1$ rows of
$\la$ removed. Take the first $e_1$ {\em column} numbers of
$\la^1$ as the coefficients of the first $e_1$ $\ep$'s. Denote by
$\la^2$ the Young diagram obtained from $\la^1$ with the first
$e_1$ columns of $\la^1$ removed. Then take the first $d_2$ {\em
row} numbers of $\la^2$ as the coefficients of the following $d_2$
$\delta$'s, and so on,  until we reach the empty partition. The
resulting weight is denoted by $\la^{\mf b}$. Below is an example
of $\la_{\mf{b}}$ for $d_1=e_1=2$ and $d_2=1$.
\begin{center}
\hskip 0cm \setlength{\unitlength}{0.25in}
\begin{picture}(9,6)
\put(0,0){\line(1,0){1}} \put(1,0){\line(0,1){2}} \put(1,2){\line(1,0){1}}
\put(2,2){\line(0,1){1}} \put(2,3){\line(1,0){2}} \put(4,3){\line(0,1){1}}
\put(4,4){\line(1,0){1}} \put(5,4){\line(0,1){1}} \put(5,5){\line(1,0){4}}
\put(9,5){\line(0,1){1}} \put(9,6){\line(-1,0){9}} \put(0,6){\line(0,-1){6}}
\put(0,5){\line(1,0){5}} \put(0,4){\line(1,0){4}} \put(1,4){\line(0,-1){2}}
\put(2,4){\line(0,-1){1}} \put(3,5.5){\makebox(0,0)[c]{$\lambda^{\mathfrak{b}}_1$}}
\put(3,4.5){\makebox(0,0)[c]{$\lambda^{\mathfrak{b}}_2$}}
\put(3,3.5){\makebox(0,0)[c]{$\lambda^{\mathfrak{b}}_5$}}
\put(1.5,3.5){\makebox(0,0)[c]{$\lambda^{\mathfrak{b}}_4$}}
\put(0.5,3.5){\makebox(0,0)[c]{$\lambda^{\mathfrak{b}}_3$}}
\end{picture}
\end{center}

Recall that by convention $L(\la^\natural)$ is a highest weight $\ga$-module with respect
to the standard Borel subalgebra $\mf b^{\text{st}}$.

\begin{theorem}  \cite{CLW}  \label{hwt change}
Let $\la$ be an $(m|n)$-hook partition. Let $\mf b$ be an arbitrary Borel subalgebra of $\glmn$.
Then, the $\mf b$-highest weight of the simple $\glmn$-module $L(\la^\natural)$ is
$\la^{\mf b}$.
\end{theorem}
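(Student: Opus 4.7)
The plan is to argue by induction on the minimal number of adjacent swaps of unlike letters required to transform the $\ep\delta$-sequence of $\mf b^{\text{st}}$ into that of $\mf b$, using Lemma~\ref{hwt odd} at each step and closing the induction with a short combinatorial check.

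First I would observe that every $\ep\delta$-sequence is reachable from the standard one $\underbrace{\delta\cdots\delta}_m\underbrace{\ep\cdots\ep}_n$ by finitely many adjacent transpositions of a $\delta$ and an $\ep$; each such transposition is the odd reflection $r_\alpha$ at the unique isotropic odd simple root $\alpha$ sitting between those two positions. (Transpositions of like letters correspond to even Weyl reflections and leave the Borel conjugacy class fixed.) The base case $\mf b = \mf b^{\text{st}}$ is immediate since $\la^{\mf b^{\text{st}}} = \la^\natural$ is, by definition, the $\mf b^{\text{st}}$-highest weight of $L(\la^\natural)$.

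For the inductive step, suppose $\mf b'$ has $\delta_i$ at position $p$ and $\ep_j$ at position $p+1$, set $\alpha = \delta_i - \ep_j$, and let $\mf b = r_\alpha(\mf b')$. By induction the $\mf b'$-highest weight of $L(\la^\natural)$ is $\la^{\mf b'}$, so Lemma~\ref{hwt odd} identifies the $\mf b$-highest weight as $\la^{\mf b'}$ when $\langle \la^{\mf b'}, h_\alpha \rangle = 0$ and as $\la^{\mf b'} - \alpha$ otherwise. The task is therefore to check that $\la^{\mf b}$, as read off from the row/column stripping of $\la$, satisfies the same rule.

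The key combinatorial observation is that if $\mu$ denotes the sub-Young diagram remaining after positions $1,\ldots,p-1$ of the stripping procedure have been performed (which coincide for $\mf b$ and $\mf b'$ since their sequences agree there), then stripping the top row and then the first column of the remainder yields exactly the same diagram as stripping the first column and then the top row. Hence the contributions to $\la^{\mf b'}$ and $\la^{\mf b}$ from positions beyond $p+1$ coincide. When $\mu \neq \emptyset$, the $\delta_i$- and $\ep_j$-coefficients of $\la^{\mf b'}$ are $\mu_1$ and $\mu'_1 - 1$, while those of $\la^{\mf b}$ are $\mu_1 - 1$ and $\mu'_1$, giving $\la^{\mf b'} - \la^{\mf b} = \delta_i - \ep_j = \alpha$ together with $\langle \la^{\mf b'}, h_\alpha \rangle = \mu_1 + (\mu'_1 - 1) \ge 1 > 0$; when $\mu = \emptyset$, the two coefficients and the pairing all vanish. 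In each case the outcome matches the prediction of Lemma~\ref{hwt odd}, completing the induction.

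The main subtlety lies in this combinatorial bookkeeping, in particular the borderline single-row case $\mu'_1 = 1$ in which the $\ep_j$-contribution on the $\mf b'$ side is $0$ while the $\mf b$ side contributes $1$; there the pairing $\mu_1 > 0$ still triggers the shift by $\alpha$, so the verification closes uniformly. Beyond this, the argument is a direct propagation of Lemma~\ref{hwt odd} along any sequence of odd reflections converting $\mf b^{\text{st}}$ into $\mf b$, and I anticipate no deeper obstacle.
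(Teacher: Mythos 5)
Your proposal is correct and follows essentially the same route as the paper's own proof: induct along odd reflections from the standard Borel (for which $\la^{\mf b^{\text{st}}}=\la^\natural$ is clear), applying Lemma~\ref{hwt odd} at each step and noting compatibility with real reflections. You additionally spell out the combinatorial verification (that row- and column-stripping commute, and that $\la^{\mf b'}-\la^{\mf b}=\alpha$ exactly when $\langle\la^{\mf b'},h_\alpha\rangle\neq 0$) which the paper leaves implicit, and this bookkeeping checks out.
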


\begin{proof}
Let us consider an odd reflection which changes a Borel subalgebra $\mf b_1$ to $\mf
b_2$. Assume the theorem holds for $\mf b_1$. We observe by Lemma~\ref{hwt odd} that the
statement of the theorem for $\mf b_2$ follows from the validity of the theorem for $\mf
b_1$. The statement of the theorem is apparently consistent with a change of Borel
subalgebras induced from a real reflection, and all Borel subalgebras of $\glmn$ are
linked by a sequence of real and odd reflections. Hence, once we know the theorem holds
for one particular Borel subalgebra, it holds for all. We finally note that the theorem
holds for the standard Borel subalgebra $\mf b^{\text{st}}$, which corresponds to the
sequence of $m$ $\delta$'s followed by $n$ $\ep$'s. It is clear that $\la^{\mf
b^{\text{st}}} =\la^\natural$.
\end{proof}

\begin{remark}
A variant of Theorem~\ref{hwt change} holds for Lie superalgebras of type $\osp$, see
\cite{CLW}.
\end{remark}

\begin{example}  \label{ex:3hw}
Let us describe the highest weights in Theorem~\ref{hwt change} with respect to the three
Borel subalgebras of special interest.
\begin{enumerate}
\item As seen above, $\la^{\mf b^{\text{st}}} =\la^\natural$.

\item If we take the opposite Borel subalgebra $\mf b^{\text{op}}$ corresponding to a
    sequence of $n$ $\ep$'s followed by $m$ $\delta$'s, then $\la^{\mf b^{\text{op}}}
    =(\la')^\natural$, where $\natural$ applies to a $(n|m)$-hook partition (instead
    of $(m|n)$-hook partition). This is as expected from Schur-Sergeev duality
    Theorem~\ref{Sergeev}.

\item In case when $|m -n| \le 1$, we may take a Borel subalgebra $\mf b^{\text{o}}$
    whose simple roots are all odd (or equivalently, the corresponding $\ep\delta$- sequence is
alternating between $\ep$ and $\delta$): 
\begin{center}
\hskip -3cm \setlength{\unitlength}{0.16in}
\begin{picture}(24,3)
\put(8,2){\makebox(0,0)[c]{$\bigotimes$}}
\put(10.4,2){\makebox(0,0)[c]{$\bigotimes$}}
\put(14.85,2){\makebox(0,0)[c]{$\bigotimes$}}
\put(17.25,2){\makebox(0,0)[c]{$\bigotimes$}}
\put(19.4,2){\makebox(0,0)[c]{$\bigotimes$}} \put(8.4,2){\line(1,0){1.55}}
\put(10.82,2){\line(1,0){0.8}} \put(13.2,2){\line(1,0){1.2}}
\put(15.28,2){\line(1,0){1.45}} \put(17.7,2){\line(1,0){1.25}}
\put(12.5,1.95){\makebox(0,0)[c]{$\cdots$}}
\end{picture}
\end{center}
In this case Theorem~\ref{hwt change} reduces to \cite[Theorem 7.1]{CW3}, which
states the coefficients of $\delta$ and $\ep$ in $\la^{\mf b^{\text{o}}}$ are given
by the {\em modified Frobenius coordinates} $(p_i | q_i)_{i \ge 1}$ of the partition
$\la$ (respectively, $\la'$), when the first simple root is of the form $\delta -\ep$
(respectively, $\ep -\delta$). Here by modified Frobenius coordinates we mean
$$
p_i =\max \{\la_i -i+1,  0\}, \quad q_i =\max \{\la_i'-i, 0\}
$$
so that $\sum_i (p_i + q_i) =|\la|$; ``modified" here refers to a shift by $1$ from
the $p_i$ coordinates defined in \cite[Chapter~ 1, Page~ 3]{Mac}.

For $\la =(7,5,4,3,1)$, we have $(p_1,p_2,p_3 | q_1, q_2, q_3) =(7,4,2 |4,2,1)$.
\begin{center}
\hskip 0cm \setlength{\unitlength}{0.25in}
\begin{picture}(7,5.5)
\put(0,0){\line(1,0){1}} \put(1,0){\line(0,1){1}} \put(1,1){\line(1,0){2}}
\put(3,1){\line(0,1){1}} \put(4,2){\line(0,1){1}} \put(3,3){\line(1,0){2}}
\put(5,3){\line(0,1){1}} \put(5,4){\line(1,0){2}} \put(7,4){\line(0,1){1}}
\put(7,5){\line(-1,0){7}} \put(0,5){\line(0,-1){5}} \put(0,4){\line(1,0){5}}
\put(1,4){\line(0,-1){3}} \put(1,3){\line(1,0){2}} \put(2,3){\line(0,-1){2}}
\put(2,2){\line(1,0){2}} \put(2.5,4.5){\makebox(0,0)[c]{$7$}}
\put(2.5,3.5){\makebox(0,0)[c]{$4$}} \put(2.5,2.5){\makebox(0,0)[c]{$2$}}
\put(0.5,1.5){\makebox(0,0)[c]{$4$}} \put(1.5,1.5){\makebox(0,0)[c]{$2$}}
\put(2.5,1.5){\makebox(0,0)[c]{$1$}}
\end{picture}
\end{center}

\end{enumerate}
\end{example}

\begin{xca}
Let $\la =(7,2,2,1,1)$ be an $(1|2)$-hook partition (i.e.~$\la_2 \le 2$). Write down all
the extremal weights with respect to the six Borel subalgebras of $\gl(1|2)$ from
Example~\ref{gl12} for $L(\gl(1|2), \la^\natural)$.
\end{xca}

\section{Howe duality for Lie superalgebras of type $\gl$}
\label{sec:Howe}

\subsection{The $(\glmn, \gl(d))$-Howe duality}

Let $V =\ev V \oplus \od V$ be a vector superspace. The symmetric algebra $S(V)$ is
understood as $S(V) = S(\ev V) \otimes \wedge (\od V)$. It follows that $S^k(V) =
\bigoplus_{\ell =0}^k S^\ell (\ev V) \otimes \wedge^{k -\ell} (\od V)$.

The commuting action of $\glmn$ and $\gl (d)$ on the super space $\C^d \otimes\C^{m|n}$
induces a corresponding commuting action on its symmetric algebra $S(\C^d
\otimes\C^{m|n})$.

\begin{theorem} \cite{H1, CW1, Sv2}   \label{supergl-duality}
\begin{enumerate}
\item The images of $U(\gl (d))$ and $U(\glmn)$ in $\End \big(S (\C^d
    \otimes\C^{m|n}) \big)$ satisfy the double centralizer property.

\item As a $(\gl (d), \glmn)$-module,
$$
S (\C^d \otimes\C^{m|n}) = \sum_{\la \in \Pmn, \ell (\la) \le d}
L_d (\la) \otimes L_{m|n} (\la^\natural ).
$$
Here and below we use subscripts $m|n$ and $d$ to indicate the  (super)algebras
under consideration.
\end{enumerate}
\end{theorem}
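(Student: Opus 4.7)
My plan is to reduce this statement to the Schur--Sergeev duality of Theorem~\ref{Sergeev} by viewing the symmetric algebra as a space of $\mathfrak S_k$-invariants in a tensor product. First I would use the canonical identification
\begin{equation*}
S^k(\C^d \otimes \C^{m|n}) \cong \big( (\C^d \otimes \C^{m|n})^{\otimes k}\big)^{\mf S_k},
\end{equation*}
where $\mf S_k$ acts on the $k$-fold tensor product by signed permutation in the super sense. Because the first tensor factor $\C^d$ is purely even, the standard rearrangement map furnishes a $\gl(d) \times \glmn \times \mf S_k$-equivariant isomorphism
\begin{equation*}
(\C^d \otimes \C^{m|n})^{\otimes k} \;\cong\; (\C^d)^{\otimes k} \otimes (\C^{m|n})^{\otimes k},
\end{equation*}
in which $\mf S_k$ acts diagonally: by the ordinary permutation action on $(\C^d)^{\otimes k}$ and by the super action $\Psi_k$ on $(\C^{m|n})^{\otimes k}$. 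The fact that the evenness of $\C^d$ eliminates all sign twists in this identification is the place where I expect the main bookkeeping work to concentrate.

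Having made this identification, I would apply classical Schur duality to the first factor and Schur--Sergeev duality (Theorem~\ref{Sergeev}) to the second factor, obtaining
\begin{equation*}
(\C^d)^{\otimes k} \cong \bigoplus_{\la \vdash k,\, \ell(\la) \le d} L_d(\la) \otimes S^\la, \qquad (\C^{m|n})^{\otimes k} \cong \bigoplus_{\mu \in \Pdmn} L_{m|n}(\mu^\natural) \otimes S^\mu.
\end{equation*}
Taking diagonal $\mf S_k$-invariants and using the self-duality of Specht modules together with Schur's lemma, $(S^\la \otimes S^\mu)^{\mf S_k} \cong \delta_{\la,\mu}\,\C$. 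Hence
\begin{equation*}
S^k(\C^d \otimes \C^{m|n}) \cong \bigoplus_{\la \in \Pdmn,\, \ell(\la) \le d} L_d(\la) \otimes L_{m|n}(\la^\natural),
\end{equation*}
and summing over $k \ge 0$ produces the stated multiplicity-free decomposition, the hook condition $\la \in \Pmn$ being automatic from $\la \in \Pdmn$.

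For part~(1), I would deduce the double centralizer property formally from the multiplicity-free decomposition in~(2). Since each $L_d(\la)$ and each $L_{m|n}(\la^\natural)$ is simple (and the latter is finite-dimensional, as $\la^\natural$ is a polynomial weight), the image of $U(\glmn)$ in $\End S(\C^d \otimes \C^{m|n})$ is the direct sum $\bigoplus_\la \End L_{m|n}(\la^\natural) \otimes 1$ acting on each isotypic component, and its commutant is $\bigoplus_\la 1 \otimes \End L_d(\la)$, which coincides with the image of $U(\gl(d))$ by the same reasoning applied in the other direction. The main obstacle, as noted, is checking that the sign conventions in the rearrangement isomorphism in the first step really do produce the super $\mf S_k$-action $\Psi_k$ appearing in Theorem~\ref{Sergeev}, and not some twist of it; once that is verified the remainder is essentially algebraic bookkeeping.
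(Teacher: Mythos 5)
Your proposal is correct and takes essentially the same approach as the paper's own proof: both pass from $S^k(\C^d\otimes\C^{m|n})$ to diagonal $\mf S_k$-invariants of $(\C^d)^{\otimes k}\otimes(\C^{m|n})^{\otimes k}$, apply classical Schur duality and Theorem~\ref{Sergeev} to the two factors, and use self-duality of Specht modules to collapse the sum. The paper records part~(1) simply as a consequence of the multiplicity-free decomposition in~(2), which is exactly the double-commutant bookkeeping you spell out.
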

The part~(2) of the theorem is equivalent to
$$
S^k (\C^d \otimes\C^{m|n}) = \sum_{\stackrel{\la \vdash k}{\la_{m+1} \le n,
\ell (\la) \le d}} L_d (\la) \otimes L_{m|n} (\la^\natural ), \quad k \in \N.
$$

\begin{proof}
Let us prove the equivalent version for $ S^k (\C^{m|n} \otimes \C^d)$ for (2). The
argument below is well-known (see \cite{H2}).

By the definition of the $k$-th supersymmetric algebra we have
\begin{equation*}
S^k (\C^d \otimes\C^{m|n}) \cong \left( (\C^{d})^{\otimes^k}
\otimes (\C^{m|n})^{\otimes^k} \right)^{\Delta_k},
\end{equation*}
where $\Delta_k$ is  the diagonal subgroup of $\mf S_k\times \mf S_k$ and
$(-)^{\Delta_k}$ denotes the $\Delta_k$-invariant subspace. By Theorem~\ref{Sergeev} we
have therefore

\begin{align*}
 S^k (\C^d \otimes\C^{m|n}) &
 \cong
 \Big((\sum_{|\mu|=k} L_{d}(\mu) \otimes S^{\mu})
 \otimes
 (\sum_{|\lambda|=k}L_{m|n} (\lambda^\natural) \otimes
 S^{\lambda}) \Big)^{\Delta_k}\\
 &\cong
 \sum_{|\lambda|=|\mu|=k}
 \left ( L_{d}(\mu) \otimes L_{m|n} (\lambda^\natural)
 \right)\otimes (S^{\mu}\otimes S^{\la})^{\Delta_k}\\
 &\cong
 \sum_{|\lambda|=k} L_{d}(\la) \otimes L_{m|n} (\lambda^\natural).
\end{align*}
The last equality follows since the Specht module $S^{\lambda}$
is self-contragredient.

Part (1) follows from the decomposition in (2).
\end{proof}

\begin{remark}\label{classical symm Howe}
In case $n=0$, we have $\la^\natural =\la$, and Theorem~\ref{supergl-duality} reduces to
the classical $(\gl (d), \gl(m))$-Howe duality on a symmetric tensor space (see
\cite[Section 2.1.2]{H2}).

For $m=0$, we have $\la^\natural =\la'$, and
Theorem~\ref{supergl-duality} reduces to the classical $(\gl (d),
\gl(n))$-Howe duality on an exterior space $\wedge (\C^d \otimes
\C^n)$ (see \cite[Section 4.1.1]{H2}).
\end{remark}

\begin{remark}
Similarly, using Sergeev's duality for queer Lie superalgebra $\mf q (n)$ (see
Remark~\ref{qn}), we can establish a $(\mf q(m), \mf q(n))$-Howe duality, see \cite{CW2, Naz,
Sv2}.
\end{remark}

\subsection{Hook Schur polynomials as $\glmn$-characters}

Let $\x =(x_1,\ldots,x_m)$ and $\y =(y_1,\ldots,y_n)$ be formal variables. For two
partitions $\la$ and $\mu$ with $\mu\subseteq\la$ denote by $s_{\la/\mu}(\x)$ the skew
Schur polynomial associated to the skew Young diagram $\la/\mu$ such that $\ell (\la) \le
m$. The skew Schur polynomials specialize to the (usual) Schur polynomials for $\mu
=\emptyset$: $s_{\la/\emptyset}(\x) =s_{\la}(\x)$. We refer to Macdonald \cite{Mac} for
more on symmetric functions.

The {\em hook Schur polynomials} $hs_\la (\x,\y)$ in variables $\x$ and $\y$ are defined as
\begin{align}\label{mn:hook}
hs_\la (\x,\y) =
\sum_{\mu\subseteq\la} s_\mu (\x) s_{\la'/\mu'} (\y), \qquad \la \in \Pmn.
\end{align}
This is one of the several equivalent definitions, see \cite{BeR}. We should compare this
definition with the classical identity for (skew) Schur functions
\begin{equation} \label{skewSchur}
s_\la (\x,\y) =
\sum_{\mu\subseteq\la} s_\mu (\x) s_{\la/\mu} (\y).
\end{equation}

The character of the $\glmn$-module $L_{m|n} (\la^\natural)$ is by definition the trace
of the action of the diagonal matrix ${\rm diag}(x_1,\ldots,x_m;y_1,\ldots,y_n)$  on
$L_{m|n} (\la^\natural)$. The following theorem was obtained in \cite{BeR}, where the
notion of hook Schur polynomials was formulated, and it is actually equivalent to the
combinatorial formula given in Corollary~\ref{ch=tab}. We offer a  different proof below
using Howe duality (cf.~\cite{CL1}).
\begin{theorem}
Let $\la \in \Pmn.$ The character  of $L_{m|n} (\la^\natural)$ is given by
$$
\ch L_{m|n} (\la^\natural) = hs_\la (\x,\y).
$$
\end{theorem}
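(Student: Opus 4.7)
The plan is to deduce the character formula from the $(\gl(d),\glmn)$-Howe duality of Theorem~\ref{supergl-duality} by computing the bicharacter of the symmetric algebra $S(\C^d\otimes\C^{m|n})$ in two different ways. Fix $d\ge\ell(\la)$, and let $z=(z_1,\ldots,z_d)$ be auxiliary variables tracking the action of the diagonal torus of $\gl(d)$.

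On the one hand, using $S(\C^d\otimes\C^{m|n})\cong S(\C^d\otimes\C^m)\otimes\wedge(\C^d\otimes\C^n)$, a direct computation gives
$$F(z;\x,\y)\;:=\;\ch S(\C^d\otimes\C^{m|n})\;=\;\prod_{i=1}^d\prod_{j=1}^m\frac{1}{1-z_i x_j}\,\prod_{i=1}^d\prod_{k=1}^n(1+z_i y_k).$$
On the other hand, Theorem~\ref{supergl-duality}(2) yields
$$F(z;\x,\y)\;=\;\sum_{\la\in\Pmn,\,\ell(\la)\le d}s_\la(z)\,\ch L_{m|n}(\la^\natural).$$

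The key step is to establish the symmetric function identity
$$F(z;\x,\y)\;=\;\sum_{\ell(\la)\le d}s_\la(z)\,hs_\la(\x,\y). \qquad (\star)$$
To prove $(\star)$, I would apply the Cauchy identity $\prod_{i,j}(1-z_ix_j)^{-1}=\sum_\mu s_\mu(z)s_\mu(\x)$ and the dual Cauchy identity $\prod_{i,k}(1+z_iy_k)=\sum_\nu s_\nu(z)s_{\nu'}(\y)$, then expand the product via $s_\mu(z)s_\nu(z)=\sum_\la c^\la_{\mu\nu}s_\la(z)$. Collecting the coefficient of $s_\la(z)$ yields $\sum_{\mu,\nu}c^\la_{\mu\nu}\,s_\mu(\x)s_{\nu'}(\y)$; invoking the conjugation symmetry $c^\la_{\mu\nu}=c^{\la'}_{\mu'\nu'}$ together with $s_{\la'/\mu'}(\y)=\sum_\nu c^\la_{\mu\nu}s_{\nu'}(\y)$, this rewrites as $\sum_{\mu\subseteq\la}s_\mu(\x)s_{\la'/\mu'}(\y)=hs_\la(\x,\y)$ by \eqref{mn:hook}.

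Finally, equating the two expressions for $F(z;\x,\y)$ and invoking the linear independence of $\{s_\la(z):\ell(\la)\le d\}$, I would read off $\ch L_{m|n}(\la^\natural)=hs_\la(\x,\y)$ for every $\la\in\Pmn$ with $\ell(\la)\le d$, along with $hs_\la(\x,\y)=0$ for partitions $\la$ with $\ell(\la)\le d$ but $\la\notin\Pmn$ (the latter vanishing also being transparent from the hook-tableau description together with Lemma~\ref{K=0}). Since $d$ can be taken arbitrarily large, this covers all $\la\in\Pmn$. The main obstacle is the identity $(\star)$: although it is essentially classical, the interplay between the two flavors of Cauchy identity and the conjugation built into the definition of $hs_\la$ demands careful bookkeeping with Littlewood-Richardson coefficients and their behavior under partition conjugation.
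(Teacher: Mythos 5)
Your proposal is correct and follows essentially the same strategy as the paper: both deduce the result by computing the $(\gl(d),\glmn)$-bicharacter of $S(\C^d\otimes\C^{m|n})$ in two ways via Theorem~\ref{supergl-duality}, establishing the hook-Schur identity, and invoking linear independence of $\{s_\la(z)\}$ as $d\to\infty$. The only cosmetic difference is in proving your identity~$(\star)$: the paper splits the variable set of the classical Cauchy identity \eqref{Cauchy} and applies the involution $\omega$ to the tail variables using \eqref{skewSchur}, whereas you expand the product of Cauchy and dual Cauchy via Littlewood--Richardson coefficients and use $c^\la_{\mu\nu}=c^{\la'}_{\mu'\nu'}$ --- two standard and equivalent manipulations.
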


\begin{proof}
Let $\underline{u} =(u_1,\ldots,u_d)$ be another set of formal variables. It is well
known that $s_\la(\underline{u})$ is the character of the $\gl(d)$-module $L_d(\la)$,
i.e.~the trace of the diagonal matrix ${\rm diag}(u_1,\ldots,u_d)$. Thus, comparing the
characters of both sides of Theorem~\ref{supergl-duality}~(2), we obtain the following
combinatorial identity:
\begin{equation}  \label{identity A}
\sum_{\la \in \Pmn, \ell (\la) \le d} s_{\la}(\underline{u}) \,
\ch L_{m|n} (\la^\natural)= \prod_{i \le m,j \le n,k \le
d}(1-x_iu_{k})^{-1} (1+y_ju_k).
\end{equation}

Let $X=(x_1, x_2, \ldots)$ and $Y =(y_1, y_2, \ldots)$ denote two sets of infinitely many
variables. Recall the classical Cauchy identity
\begin{equation}  \label{Cauchy}
\sum_{\ell (\la) \le d}s_\la(\underline{u})s_\la(X)=\prod_{k \le
d}\prod_{i \ge 1}(1-x_iu_k)^{-1},
\end{equation}
which readily follows from Howe duality Theorem~\ref{supergl-duality} (by setting $n=0$
and letting $m \to \infty$).

Consider the involution $\omega$ on the ring of symmetric functions in the set of
variables $(x_{m+1}, x_{m+2}, \ldots)$. Applying $\omega$ to \eqref{Cauchy},
replacing  $(x_{m+1}, x_{m+2}, \ldots)$ by $Y$, and using
\eqref{skewSchur}, we obtain
\begin{equation*}
\sum_{\ell (\la) \le d} s_{\la}(\underline{u}) hs_\la (\x,Y) =
\prod_{i \le m,k \le d, j \ge 1}(1-x_iu_{k})^{-1}
(1+y_ju_k).
\end{equation*}
Now setting $y_{n+1} =y_{n+2} =\ldots =0$ in the above identity, we obtain that
\begin{equation*}
\sum_{\ell (\la) \le d} s_{\la}(\underline{u})  hs_\la (\x,\y) =
\prod_{i \le m,j \le n,k \le d}(1-x_iu_{k})^{-1}
(1+y_ju_k).
\end{equation*}
By comparing with \eqref{identity A} and noting the linear independence of
$\{s_{\la}(\underline{u})\}$ for $d=\infty$, we have proved the theorem.
\end{proof}

\begin{remark}
It follows that
$$hs_\la (X, \emptyset) =s_{\la}(X), \quad hs_\la
(\emptyset, Y) =s_{\la'}(Y), \quad hs_\la (Y,X)=hs_{\la'} (X, Y).
$$
Both the usual Cauchy identity and its dual version are obtained from specializations of
the corresponding character identity of Theorem~\ref{supergl-duality}.
\end{remark}

\subsection{Formulas for highest weight vectors}

We let $e^1,\ldots,e^d$ denote the standard basis for the natural $\gl(d)$-module $\C^d$,
and recall that $e_i, i \in I(m|n),$ denote the standard basis for the natural
$\gl(m|n)$-module $\C^{m|n}$. We set
\begin{equation}\label{generators}
x_a^i:=e^i \otimes e_{\bar a} \ (1\le a\le m);\quad
\eta_b^i:=e^i\otimes e_b \ (1\le b \le n).
\end{equation}

We will denote by $\Cx$ the polynomial superalgebra generated by \eqref{generators}. The
commuting actions on $\Cx$ of $\gl(d)$ and $\gl(m|n)$ may be realized in terms of first
order differential operators \eqref{gld} and \eqref{glmn} respectively ($1\le i,i'\le d$
and $1\le s,s'\le m; 1\le k,k'\le n$):
\begin{align}
E^{ii'}:=&\sum_{j=1}^m x_{j}^{i}\frac{\partial}{\partial
x_j^{i'}}+\sum_{j=1}^n\eta_j^{i}\frac{\partial}{\partial\eta_j^{i'}},
 \label{gld}\\
 \sum_{j=1}^d x_{s}^{j}\frac{\partial}{\partial x_{s'}^j},\quad
&\sum_{j=1}^d\eta_{k}^{j}\frac{\partial}{\partial\eta_{k'}^j},\quad
\sum_{j=1}^d x_s^j\frac{\partial}{\partial\eta_k^j},\quad
\sum_{j=1}^d\eta_k^j\frac{\partial}{\partial x_s^j}.\label{glmn}
\end{align}
In particular, as $(\gl (d), \glmn)$-module, $S(\C^p\otimes\C^{m|n})$ is isomorphic to
$\Cx$. The root vectors corresponding to the simple roots of $\gl (d)$ and $\glmn$ are
\begin{align}
 &\sum_{j=1}^m x_{j}^{i-1} \frac{\partial}{\partial x_j^{i}}
+\sum_{j=1}^n\eta_j^{i-1}\frac{\partial}{\partial\eta_j^{i}},
\label{radicalp}\\
  \sum_{j=1}^d
x_{s-1}^{j} & \frac{\partial}{\partial x_{s}^j},\quad
\sum_{j=1}^d \eta_{k-1}^{j}\frac{\partial}{\partial\eta_k^j},\quad
\sum_{j=1}^d
x_m^j\frac{\partial}{\partial\eta_1^j}\label{radicalpmn},
\end{align}
respectively.

Let $\la$ be an $(m|n)$-hook partition of size $k$ and of length at most $d$. We are
looking for the joint highest weight vector (with respect to the standard Borel
subalgebras), or equivalently the vector annihilated by \eqref{radicalp} and
\eqref{radicalpmn}, for the highest weight module $L_d (\la) \otimes L_{m|n}
(\la^\natural)$ of $\gl(d)\times \glmn$ appearing in the decomposition of $\Cx$. Such a
vector is unique up to a scalar multiple, thanks to the multiplicity-free decomposition
in Part 2 of Theorem~\ref{supergl-duality}.

For $1\le r\le m$, define
\begin{equation}\label{deltar}
\diamondsuit_r:={\rm det}\begin{pmatrix}
x_{1}^{1}&x_{2}^{1}&\cdots&x_{r}^{1}\\
x_{1}^{2}&x_{2}^{2}&\cdots&x_{r}^{2}\\
\vdots&\vdots&\vdots&\vdots\\
x_{1}^{r}&x_{2}^{r}&\cdots&x_{r}^{r}\\
\end{pmatrix}.
\end{equation}

Let us assume for now that $d>m$ (the case of $d \leq m$ is trivially included with much
simplification, in which case we will not need the definition of $\diamondsuit_{k,r}$
below). Under this assumption, the condition $\la_{m+1}\le n$ is no longer an empty
condition.
Recall $\la' =(\lambda_1',\lambda_2',\ldots)$ denotes the transposed partition of $\la$.
We have $d \ge \la'_1 \ge \la'_2 \ge \ldots $ and $m\ge\la'_{n+1}$.

Recall that the {\em row determinant} of an $r \times r$ matrix with possibly
non-commuting entries $A = [a_i^j]$ is defined to be
$$
\text{rdet}A = \sum_{\sigma\in \mf
S_r}(-1)^{\ell(\sigma)}a_1^{\sigma(1) }a_2^{\sigma (2) }\cdots
a_r^{\sigma (r) }.
$$
For $m\le r\le d$, we introduce the following row determinant:
\begin{equation} \label{eq_det}
\diamondsuit_{k,r}:={\rm rdet}
\begin{pmatrix}
x_1^1&x_1^2&\cdots &x_1^r\\
x_2^1&x_2^2&\cdots &x_2^r\\
\vdots&\vdots&\cdots &\vdots\\
x_m^1&x_m^2&\cdots &x_m^r\\
\eta_k^1&\eta_k^2&\cdots &\eta_k^r\\
\vdots&\vdots&\cdots &\vdots\\
\eta_k^1&\eta_k^2&\cdots &\eta_k^r\\
\end{pmatrix},\quad k=1,\ldots,n,
\end{equation}
where
the last $(r-m)$ rows are filled with the same vector
$(\eta_k^1,\eta_k^2,\ldots,\eta_k^r)$.

\begin{remark}
The determinant (\ref{eq_det}) is always nonzero. It reduces to (\ref{deltar}) when $m
=r$, and, up to a scalar multiple, it reduces to $\eta_k^1 \cdots \eta_k^r$ when $m=0$.
\end{remark}

Now let $r$ be specified by the conditions $\la'_r>m$ and
$\la'_{r+1}\le m$. Denote by $\la_{\le r}$ the subdiagram of the
Young diagram $\la$ which consists of the first $r$ columns of
$\la$, i.e., the columns of length  $>m$.
A formula for the highest weight vector associated to the Young diagram $\la_{\le r}$ is
given by the following lemma.

\begin{lemma}
The vector $\prod_{k=1}^r\diamondsuit_{k,\la'_k}
$ is a highest weight vector  for the highest weight module $L_d (\la_{\le r}) \otimes
L_{m|n} (\la_{\le r}^\natural)$ in the decomposition of $\Cx$.
\end{lemma}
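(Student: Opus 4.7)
The plan is to prove that $v := \prod_{k=1}^{r} \diamondsuit_{k, \la'_k}$ is a nonzero joint highest weight vector in $\Cx$ of bi-weight $(\la_{\le r}, \la_{\le r}^\natural)$ for the standard Borel subalgebras of $\gl(d)$ and $\glmn$, and then to invoke the multiplicity-free decomposition in Theorem~\ref{supergl-duality}(2) to identify $v$ as the generator of the summand $L_d(\la_{\le r}) \otimes L_{m|n}(\la_{\le r}^\natural)$. A direct inspection shows each summand in the expansion of $\diamondsuit_{k, \la'_k}$ carries $\gl(d)$-weight $\epsilon_1+\cdots+\epsilon_{\la'_k}$ and $\glmn$-weight $\delta_1+\cdots+\delta_m+(\la'_k-m)\epsilon_k$; summing over $k$, the coefficient of $\epsilon_j$ in the $\gl(d)$-weight is $\#\{k:\la'_k\ge j\}=(\la_{\le r})_j$, and the $\glmn$-weight $r(\delta_1+\cdots+\delta_m)+\sum_{k=1}^{r}(\la'_k-m)\epsilon_k$ matches $\la_{\le r}^\natural$ via~\eqref{naturalmap}, using $(\la_{\le r})_s=r$ for $s\le m$ and $\bigl((\la_{\le r})^+\bigr)'_k=\la'_k-m$ for $k\le r$.

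The annihilation by most simple raising operators rests on a single principle: the row determinant of a matrix with two coincident columns, or two coincident even rows, vanishes, by pairing each permutation $\sigma$ with its composition with the relevant transposition (the entries agree position-by-position while the signs are opposite). This handles three of the four families. For the $\gl(d)$ raising $\sum_s x_s^{i-1}\partial_{x_s^i}+\sum_\ell \eta_\ell^{i-1}\partial_{\eta_\ell^i}$ (with $2\le i\le d$), acting on $\diamondsuit_{k,\la'_k}$ either replaces column $i$ by column $i-1$ when $i\le\la'_k$ or is trivially zero when $i>\la'_k$; Leibniz then kills $v$. For the even $\glmn$ raising $\sum_j x_{s-1}^j\partial_{x_s^j}$ ($2\le s\le m$), row $s$ of each $\diamondsuit_{k,\la'_k}$ is replaced by row $s-1$, producing two identical even rows. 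For the odd simple-root operator $\sum_j x_m^j\partial_{\eta_1^j}$, only $\diamondsuit_{1,\la'_1}$ is affected and an $\eta_1$-row becomes an $x_m$-row coinciding with the existing $m$-th row of the defining matrix.

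The subtler case is the even $\glmn$ raising $D_k:=\sum_j \eta_{k-1}^j\partial_{\eta_k^j}$ for $2\le k\le n$, for which single-factor vanishing fails. Since $D_k$ is an even derivation and only $\diamondsuit_{k,\la'_k}$ contains $\eta_k$'s, one obtains $D_k(v)=\bigl(\prod_{k'\ne k}\diamondsuit_{k',\la'_{k'}}\bigr)\cdot D_k(\diamondsuit_{k,\la'_k})$, and $D_k(\diamondsuit_{k,\la'_k})$ equals, up to a nonzero scalar, the row determinant of the defining matrix with one $\eta_k$-row turned into an $\eta_{k-1}$-row. Writing both $\diamondsuit_{k-1,\la'_{k-1}}$ and $D_k(\diamondsuit_{k,\la'_k})$ in the bideterminant expansion $\sum_{J}\pm\det(x_s^j)_{s\in[m],j\in J}\prod_{j\in J^c}\eta_\bullet^j$, the product $\diamondsuit_{k-1,\la'_{k-1}}\cdot D_k(\diamondsuit_{k,\la'_k})$ carries $\la'_{k-1}-m+1$ antisymmetrized $\eta_{k-1}^j$-factors with indices $j\in[\la'_{k-1}]$ (using $\la'_{k-1}\ge\la'_k$): monomials in which the newly introduced $\eta_{k-1}^j$ duplicates an existing index vanish by $(\eta_{k-1}^j)^2=0$, and the remaining monomials cancel in pairs under the transposition-of-indices argument, reflecting a super Pl\"ucker/Garnir-type relation for the combined $\eta_{k-1}$-block. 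Hence $D_k(v)=0$.

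Finally, $v$ is nonzero: its expansion contains the diagonal monomial $\prod_{s=1}^m (x_s^s)^r\cdot\prod_{k=1}^r\prod_{j=m+1}^{\la'_k}\eta_k^j$, whose multidegree pattern is unique and therefore cannot be canceled by any other contribution. By the multiplicity-one statement of Theorem~\ref{supergl-duality}(2), the space of joint highest weight vectors of bi-weight $(\la_{\le r},\la_{\le r}^\natural)$ in $\Cx$ is one-dimensional, so $v$ generates the summand $L_d(\la_{\le r})\otimes L_{m|n}(\la_{\le r}^\natural)$. The main obstacle I foresee is the cancellation in the $D_k$-step: unlike the matrix-level collapses available for the other raising operators, this step genuinely needs the full product structure, and a clean formulation via super Pl\"ucker/Garnir relations on the combined $\eta_{k-1}$-antisymmetrization is where the substantive combinatorial work lies.
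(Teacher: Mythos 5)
Your weight computation, the non-vanishing argument via the diagonal monomial, and the column-coincidence argument killing the $\gl(d)$ raising operators \eqref{radicalp} are all correct and agree in substance with steps (i)--(iii) of the paper's proof; the row-coincidence arguments for $\sum_j x_{s-1}^{j}\frac{\partial}{\partial x_{s}^j}$ and $\sum_j x_m^j\frac{\partial}{\partial\eta_1^j}$ are also fine. The genuine gap is exactly where you locate it: the operators $D_k=\sum_j \eta_{k-1}^{j}\frac{\partial}{\partial\eta_k^j}$ are never actually shown to annihilate the product. You correctly observe that no single factor is killed and that the vanishing of $\diamondsuit_{k-1,\la'_{k-1}}\cdot D_k(\diamondsuit_{k,\la'_k})$ must come from a cancellation across two factors, but the assertion that the surviving monomials ``cancel in pairs \dots reflecting a super Pl\"ucker/Garnir-type relation'' is a statement of what would need to be proved rather than a proof, as you acknowledge. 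As written, your argument establishes only that $v$ is a nonzero $\gl(d)$-highest weight vector of the correct bi-weight, not a joint highest weight vector.

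The paper closes precisely this gap with no computation at all, via Exercise~\ref{xca:hwv}: once $v$ is known to be a nonzero $\gl(d)$-highest weight vector of $\gl(d)$-weight $\la_{\le r}$ and $\glmn$-weight $\la_{\le r}^\natural$, the multiplicity-free decomposition of Theorem~\ref{supergl-duality}(2) places $v$ in the space of $\gl(d)$-highest weight vectors of weight $\la_{\le r}$, which is isomorphic as a $\glmn$-module to $L_{m|n}(\la_{\le r}^\natural)$; in that module the weight $\la_{\le r}^\natural$ is the highest weight and occurs with multiplicity one, so $v$ is automatically annihilated by all of \eqref{radicalpmn}. In other words, the duality you invoke only at the very end should be invoked one step earlier: it converts the hard $D_k$ cancellation (and the other two $\glmn$ checks, which you did handle) into a weight argument. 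Your direct route can in principle be completed, but the super Laplace/Pl\"ucker identity it requires is a nontrivial lemma that must be stated and proved, not merely alluded to.
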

\begin{proof}
Set $\diamondsuit =\prod_{k=1}^r\diamondsuit_{k,\la'_k}$, and $\mu =\la_{\le r}$. We
verify the following.

(i)  $ \diamondsuit$ has weight $(\mu, \mu^\natural)$ with respect to the action of $\gl(d)\times \gl(m|n)$.

(ii) $\diamondsuit$ is non-zero.

(iii) $\diamondsuit$ is annihilated by the operators in \eqref{radicalp}.

By Exercise~\ref{xca:hwv} below, (i)-(iii) imply that $\diamondsuit$ is also a highest
weight vector for $\glmn$.
\end{proof}

\begin{xca}  \label{xca:hwv}
Assume that $v \in \Cx$ has weights $\la$ and $\la^\natural$ with respect to $\gl(d)$ and
$\glmn$, respectively. Prove that if $v$ is a highest weight vector for $\gl (d)$, then
so is it  for $\glmn$.
\end{xca}

\begin{theorem} \cite{CW1}
\label{gldmn-hwv}
Let $\la$ be a Young diagram of length  $\leq d$ such that $\la_{m+1}\le n$.  Then, a
highest weight vector of weight $(\la, \la^\natural)$ in the $\gl(d) \times \glmn$-module
$\Cx$ is given by
$$
\prod_{k=1}^r\diamondsuit_{k,\la'_k}\prod_{j=r+1}^{\la_1}\diamondsuit_{\la'_j},
$$
where $r$ is defined by $\la'_r>m$ and $\la'_{r+1}\le m$.
\end{theorem}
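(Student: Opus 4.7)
The plan is to factor the proposed vector along the column decomposition of $\la$ into a piece handled by the preceding lemma and a piece from classical Howe duality, and then to invoke Exercise~\ref{xca:hwv}. Let $\la_{\le r}$ denote the subdiagram of $\la$ consisting of its first $r$ columns (each of length $>m$), and let $\la^{>r}$ denote the complementary subdiagram consisting of the remaining columns (each of length $\le m$, so $\la^{>r}$ has length $\le m$). Set
\[ F_1 := \prod_{k=1}^{r}\diamondsuit_{k,\la'_k}, \qquad F_2 := \prod_{j=r+1}^{\la_1}\diamondsuit_{\la'_j}. \]
By the preceding lemma, $F_1$ is a joint highest weight vector in $\Cx$ of weight $(\la_{\le r},\la_{\le r}^\natural)$. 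The factor $F_2$ involves only the even variables $x_s^i$ and sits in the polynomial subalgebra $\C[x_s^i]\subset\Cx$; in that subalgebra, $F_2$ is precisely the classical $(\gl(d),\gl(m))$-Howe-duality highest weight vector (cf.~Remark~\ref{classical symm Howe}) for the partition $\la^{>r}$, carrying $\gl(d)$-weight $\la^{>r}$ and $\gl(m|n)$-weight $\sum_{s=1}^{m}(\la^{>r})_s\,\delta_s$ (no $\ep_k$-contribution since $F_2$ contains no $\eta$'s).

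I would then check that the joint weights of $F_1F_2$ sum to $(\la,\la^\natural)$. Because the two subdiagrams partition $\la$ column-wise, $(\la_{\le r})_s+(\la^{>r})_s=\la_s$, so the $\gl(d)$-weights add to $\la$. For the $\gl(m|n)$-weight,
\[ \la_{\le r}^\natural=\sum_{s=1}^{m}(\la_{\le r})_s\,\delta_s+\sum_{k=1}^{r}(\la'_k-m)\,\ep_k, \]
since column $k$ of $\la_{\le r}$ contributes $m$ even entries (weight $\delta_1+\cdots+\delta_m$) followed by an odd tail of $\la'_k-m$ entries of weight $\ep_k$. Adding the $\gl(m|n)$-weight of $F_2$ yields $\sum_{s}\la_s\,\delta_s+\sum_{k=1}^{r}(\la'_k-m)\,\ep_k$, which is precisely $\la^\natural$, as the potential terms with $k>r$ would vanish anyway ($\la'_k-m\le 0$ there).

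It remains to check non-vanishing and $\gl(d)$-annihilation, after which Exercise~\ref{xca:hwv} finishes the proof. Each raising operator $E^{i-1,i}$ from \eqref{radicalp} is even and kills each individual factor: $F_1$ by the preceding lemma, and each minor $\diamondsuit_{\la'_j}$ inside $F_2$ because $E^{i-1,i}$ yields the determinant with row $i$ replaced by row $i-1$, which has two equal rows and hence vanishes. The Leibniz rule (with no sign corrections) then gives $E^{i-1,i}(F_1F_2)=0$. For non-vanishing, $F_2\neq 0$ in the integral domain $\C[x_s^i]$, and $\Cx$ is a free $\C[x_s^i]$-module on the exterior monomials in the $\eta_b^i$, so multiplying $F_1\neq 0$ by the nonzero polynomial $F_2$ cannot yield zero. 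With the correct joint weight and the $\gl(d)$-highest-weight property in hand, Exercise~\ref{xca:hwv} delivers the $\gl(m|n)$-highest-weight property. The one delicate point is the weight bookkeeping in the second paragraph: the non-obvious form of $\la^\natural$, in which the odd ($\ep$-direction) contributions come exclusively from the columns of length $>m$, is matched by the split of the product at the column index $r=\la_{m+1}$, and once this split is recognized the theorem reduces to the preceding lemma combined with classical Howe duality.
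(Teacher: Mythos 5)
Your proof is correct and takes essentially the same approach as the paper's one-line sketch: the proposed vector is a product of two highest weight vectors (one covered by the preceding lemma, one from classical $(\gl(d),\gl(m))$-Howe duality), hence itself a highest weight vector, and a weight check finishes. You fill in the weight bookkeeping and non-vanishing in detail and route the $\glmn$-part through Exercise~\ref{xca:hwv} rather than observing directly that the raising operators are (super)derivations annihilating both factors, but this is a minor variant of the same argument.
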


\begin{proof}
Since the expression is a product of two highest weight vectors, it is a highest weight
vector. Also it is easy to verify that it has the correct weight.
\end{proof}

\section{Howe duality for Lie superalgebras of type $\osp$}
\label{sec:Howe app}

\subsection{General ideas of dual pairs}
\label{general dual}

Let $V$ be a vector (super)space, and $\mc P(V)$ the polynomial (super)algebra on $V$.
Let $\mc D (V)$ denote the Weyl (super)algebra of differential operators with polynomial
coefficients on $V$. For example, if $x_1, \ldots, x_m, \xi_1, \ldots, \xi_n$ are the
even/odd coordinates on $V$ relative to a homogeneous basis (so that $V \cong \C^{m|n}$),
then $\mc D (V)$ is generated by the even generators $x_i, \partial_i :=\partial /
\partial x_i$ and the odd generators $\xi_j, \eth_j:=
\partial /\partial \xi_j$ $(1\le i \le m, 1\le j \le n)$,
subject to the only nontrivial (super) commutation relations among the generators:
$$
\partial_i x_i -x_i
\partial_i = 1, \qquad
\eth_j \xi_j + \xi_j \eth_j = 1.
$$

Given a reductive group $G$ acting on $V$, we have a multiplicity-free decomposition over
$(G, \mc D(V)^G)$:
$$
\mc P(V) =\bigoplus_{\la \in \mf P} L(\la) \otimes U^\la,
$$
where $L(\la)$ are pairwise non-isomorphic irreducible $G$-modules and $U^\la$ are
pairwise non-isomorphic irreducible $\mc D(V)^G$-modules, as $\la$ runs over some set
$\mf P$ of parameters.

If there exists a generating set  $\{T_1, \ldots, T_r\}$ for the (super)algebra $\mc
D(V)^G$, such that $\ga ' := \C$-{span}$\langle T_1, \ldots, T_r \rangle$ forms a Lie
subalgebra of $\mc D(V)^G$, then the above $U^\la$'s are pairwise non-isomorphic
irreducible $\ga '$-modules. We refer to \cite{GW} for more detailed discussion.

\begin{remark} \label{choice}
For a suitable choice of $V$ and $G$, there could exist a very canonical candidate for
the dual partner $\ga'$, which is suggested by the First Fundamental Theorem of invariant
theory \cite{H1, GW}. We will refer to $(G, \ga')$ as a {\em Howe dual pair} acting on
$S(V)$.

Once $G, \ga'$ are fixed, the basic problems in $(G, \ga')$-Howe duality include
describing the $\ga'$-module $U^\la$ explicitly (e.g.~describing its highest weight if this makes
sense) and determining the parameter set $\mf P$.
\end{remark}

For example, if we let $V = (\C^d \otimes \C^n)^*$ and $G =\text{GL}(d)$, then $\mc P(V)
=S(\C^d \otimes \C^n)$ and we can choose $\ga' =\gl (n)$. This gives rise to the
$(\gl(d), \gl(n))$-duality from Remark \ref{classical symm Howe}.

\subsection{}
In the type $A$ Howe dualities, we can replace the Lie group $\text{GL}(d)$ by its Lie
algebra $\gl(d)$ and vice versa without loss of information. However for more general
Howe dualities, it is essential to use Lie groups $G$ as formulated in Section~\ref{general dual}.
Associated to a given Lie algebra $\ga$ (e.g. $\mf{so}(d)$), one associates several
different (possibly disconnected) groups $G$, e.g. $\text{SO}(d), \text{O}(d),
\text{Spin}(d), \text{Pin}(d)$. The Howe duality formulation favors $\text{O}(d)$ and
$\text{Pin}(d)$ in the sense of Remark~\ref{choice}.

To avoid the complication of describing the irreducible representations of different
covering groups, we will choose to discuss a Howe duality involving the Lie group
$\Sp(d)$, where $d =2\ell$ must be an even integer. In this case, the finite-dimensional
simple modules over the group $\Sp(d)$ and those over the Lie algebra $\mf{sp}(d)$
correspond bijectively, and it makes no difference if we use $\mf{sp}(d)$ to replace
$\Sp(d)$.  Besides, they admit a nice parametrization in terms of partitions. We denote
the $\text{Sp}(d)$-module corresponding to the partition $\la$ by $L(\text{Sp}(d),\la)$.

According to \cite{H1}, there exists a Howe dual pair $(\Sp(d), \mf{so}(2m))$ on $S(\C^d
\otimes \C^m)$, and a Howe dual pair $(\Sp(d), \mf{sp}(2n))$ on $\wedge(\C^d \otimes
\C^n)$. By fusing these two dual pairs together, we obtain a Howe dual pair $(\Sp(d),
\osp(2m|2n))$ on $S(\C^d \otimes \C^{m|n})$. Let us explain the commuting actions below.

As before, we identify $S (\C^d \otimes\C^{m|n}) = \Cx$. The action of the Lie algebra
$\mf{sp} (d)$ as the subalgebra of $\gl (d)$ on $\Cx$ lifts to an action of Lie group
$\Sp (d)$. On the other hand, the following action of $\glmn$ on $\Cx$ is obtained from
\eqref{glmn} by a shift of scalars on the diagonal matrices:
\begin{align}
E^{xx}_{is}=\sum_{j=1}^d x^j_i\frac{\partial}{\partial
x_s^j}+\frac{d}{2}\delta_{is},\quad
E^{x\eta}_{ik}=\sum_{j=1}^d x^j_i\frac{\partial}{\partial \eta_k^j},
\label{glmn'}\allowdisplaybreaks\\
E^{\eta x}_{ki}=\sum_{j=1}^d \eta^j_k\frac{\partial}{\partial
x_i^j},\quad E^{\eta\eta}_{tk}=\sum_{j=1}^d
\eta^j_t\frac{\partial}{\partial
\eta_k^j}-\frac{d}{2}\delta_{ik},\nonumber
\end{align}
where $i,s=1,\ldots,m$ and $k,t=1,\ldots,n$. Introduce the following additional operators
\begin{align}
\begin{split} \label{extra gen}
& I^{xx}_{is}=\sum_{j=1}^{\frac{d}{2}} \Big{(}x^j_i
x^{d+1-j}_s-x^{d+1-j}_i x^j_s\Big{)},\quad
 I^{x\eta}_{ik}=\sum_{j=1}^{\frac{d}{2}} \Big{(}x^j_i
\eta^j_k-x^{d+1-j}_i
\eta^{d+1-j}_k\Big{)},\allowdisplaybreaks\\
& I^{\eta\eta}_{kt}=\sum_{j=1}^{\frac{d}{2}} \Big{(}\eta^j_k
\eta^{d+1-j}_t-\eta^{d+1-j}_k \eta^j_t\Big{)}, \
 \Delta^{xx}_{is}=\sum_{j=1}^{\frac{d}{2}}
\Big{(}\frac{\partial}{\partial x^j_i} \frac{\partial}{\partial
x^{d+1-j}_s}-\frac{\partial}{\partial x^{d+1-j}_i}
\frac{\partial}{\partial
x^j_s}\Big{)},\allowdisplaybreaks\\
& \Delta^{x\eta}_{ik}=\sum_{j=1}^{\frac{d}{2}}
\Big{(}\frac{\partial}{\partial x^j_i} \frac{\partial}{\partial
\eta^{d+1-j}_k}-\frac{\partial}{\partial x^{d+1-j}_i}
\frac{\partial}{\partial \eta^j_k}\Big{)},\allowdisplaybreaks\\
& \Delta^{\eta\eta}_{kt}=\sum_{j=1}^{\frac{d}{2}}
\Big{(}\frac{\partial}{\partial \eta^j_k} \frac{\partial}{\partial
\eta^{d+1-j}_t}-\frac{\partial}{\partial \eta^{d+1-j}_k}
\frac{\partial}{\partial \eta^j_t}\Big{)},
\end{split}
\end{align}
where $1\le i< s\le m$ and $1\le k\le t\le n$. It is not hard to see that these operators
together with \eqref{glmn'} form a basis for the Lie superalgebra $\osp(2m|2n)$. Moreover
the actions of $\osp(2m|2n)$ and  of $\Sp(d)$ on $\Cx$ commute.

\begin{theorem} \cite{H1}
 \label{Howe:sp-osp}
Let $d =2 \ell$. The images of the algebras $\C \Sp (d)$ and
$U(\osp(2m|2n))$ in $\End \big(S (\C^d \otimes\C^{m|n}) \big)$
satisfy the double centralizer property.
\end{theorem}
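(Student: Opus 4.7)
The plan is to follow the general Howe duality strategy of Section~\ref{general dual}. Write $V = \C^d \otimes \C^{m|n}$, so that $S(V) = \Cx$. The argument has two pieces: (a) identifying the commutant of $\Sp(d)$ in $\End(S(V))$ with the image of the $\Sp(d)$-invariants $\mc D(V)^{\Sp(d)}$ in the Weyl superalgebra of differential operators on $V$; and (b) showing that this invariant algebra is in fact generated (as an associative superalgebra) by the explicit operators listed in \eqref{glmn'}--\eqref{extra gen}, which span $\osp(2m|2n)$. Complete reducibility of the reductive group $\Sp(d)$ on the locally finite module $S(V)$ will then deliver the double centralizer property.

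First I would check that the operators in \eqref{glmn'} and \eqref{extra gen} are $\Sp(d)$-invariant; this is built into their construction, as each is a contraction of $\Sp(d)$-tensors against the invariant symplectic form on $\C^d$ (this is why the pairings couple the index $j$ with $d+1-j$ with alternating signs). Hence the image of $U(\osp(2m|2n))$ is contained in $\End_{\Sp(d)}(S(V))$. Because $S(V)$ is a locally finite and completely reducible $\Sp(d)$-module, a standard density argument yields
\[
\End_{\Sp(d)}(S(V)) = \text{image of } \mc D(V)^{\Sp(d)} \text{ in } \End(S(V)),
\]
so both sides of the double centralizer fit inside $\mc D(V)^{\Sp(d)}$, reducing the theorem to a purely invariant-theoretic statement.

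The heart of the argument is the claim that $\mc D(V)^{\Sp(d)}$ is generated as an associative superalgebra by the operators \eqref{glmn'}--\eqref{extra gen}. I would establish this by passing to the associated graded with respect to the standard filtration on $\mc D(V)$ by total degree in $(x,\eta,\partial_x,\partial_\eta)$. The associated graded identifies with the supercommutative algebra $\mc P(V\oplus V^*)$, and complete reducibility of $\Sp(d)$ gives $\gr (\mc D(V)^{\Sp(d)}) = \mc P(V\oplus V^*)^{\Sp(d)}$. One then invokes the super analogue of the First Fundamental Theorem for $\Sp(d)$ (cf.~\cite{H1, Sv2}): the invariants in $\mc P(V\oplus V^*)$ are generated by the quadratic contractions built from the symplectic form on $\C^d$ together with the natural (super)pairing between $\C^{m|n}$ and $(\C^{m|n})^*$. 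These quadratic generators are precisely the principal symbols of the operators in \eqref{glmn'}--\eqref{extra gen}, and an induction on filtration degree then lifts the generation statement from the associated graded back to $\mc D(V)^{\Sp(d)}$ itself.

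The main obstacle will be the super FFT step just invoked, along with the sign bookkeeping needed when lifting invariants from the supercommutative graded algebra back into the Weyl superalgebra (in particular, verifying that no genuinely new invariants appear outside the Lie superalgebra generated by \eqref{glmn'}--\eqref{extra gen}). Once the equality $\Phi(U(\osp(2m|2n))) = \End_{\Sp(d)}(S(V))$ is secured, the reverse equality $\End_{U(\osp(2m|2n))}(S(V)) = \Psi(\C\,\Sp(d))$ follows from complete reducibility of $S(V)$ as an $\Sp(d)$-module via the standard double commutant argument (cf.~\cite[Theorem 3.3.7]{GW}), completing the proof.
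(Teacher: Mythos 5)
Your proposal is correct and follows essentially the same route as the paper, whose one-sentence proof simply invokes the fact that the $\Sp(d)$-invariants in the endomorphism ring of $S(\C^d\otimes\C^{m|n})$ are generated by quadratic invariants; your write-up is a faithful (and more detailed) elaboration of exactly that argument, reducing to the super First Fundamental Theorem via the associated graded and concluding with the standard double commutant theorem.
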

\begin{proof}
The proof is based on the fact that the invariants of the classical group $\Sp(d)$ of the
corresponding dual pair in the endomorphism ring of $S(\C^{d}\otimes\C^{m|n})$ are
generated by quadratic invariants.
\end{proof}

Denote by $\mf u^+$ (respectively, $\mf u^-$) the subalgebra of $\osp(2m|2n)$ spanned by
the $\Delta$ (respectively, $I$) operators in \eqref{extra gen}. The highest weight
module $L (\osp(2m|2n), \mu)$ below is understood to be relative to the Borel subalgebra
of $\osp(2m|2n)$ corresponding to the simple roots listed in the following Dynkin
diagram:

\begin{center}
\hskip -3cm \setlength{\unitlength}{0.16in}
\begin{picture}(24,4)
\put(6,3.8){\makebox(0,0)[c]{$\bigcirc$}} \put(6,.3){\makebox(0,0)[c]{$\bigcirc$}}
\put(8,2){\makebox(0,0)[c]{$\bigcirc$}} \put(10.4,2){\makebox(0,0)[c]{$\cdots$}}
\put(12.5,1.95){\makebox(0,0)[c]{$\bigcirc$}}
\put(14.85,2){\makebox(0,0)[c]{$\bigotimes$}} \put(17.25,2){\makebox(0,0)[c]{$\bigcirc$}}
\put(19.4,2){\makebox(0,0)[c]{$\cdots$}} \put(21.4,1.95){\makebox(0,0)[c]{$\bigcirc$}}
\put(8.4,2){\line(1,0){1.1}} \put(11.02,2){\line(1,0){0.9}}
\put(13.2,2){\line(1,0){1.2}} \put(15.28,2){\line(1,0){1.45}}
\put(17.7,2){\line(1,0){0.9}} \put(19.9,2){\line(1,0){1.0}}
\put(7.6,2.2){\line(-1,1){1.3}} \put(7.6,1.8){\line(-1,-1){1.3}}
\put(9.4,1){\makebox(0,0)[c]{\tiny$\delta_{2}-\delta_{3}$}}
\put(14.8,1){\makebox(0,0)[c]{\tiny $\delta_{m}-\epsilon_{1}$}}
\put(21.4,1){\makebox(0,0)[c]{\tiny $\epsilon_{n-1} -\epsilon_{n}$}}
\put(3.2,3.8){\makebox(0,0)[c]{\tiny$\delta_{1}-\delta_{2}$}}
\put(3.2,0.3){\makebox(0,0)[c]{\tiny $-\delta_{1}-\delta_{2}$}}
\end{picture}
\end{center}
Then $\glmn$ is a Levi subalgebra of $\osp(2m|2n)$ corresponding to the removal of the
simple root $-\delta_{1}-\delta_{2}$. We have a triangular decomposition of Lie
superalgebra $\osp(2m|2n) =\mf{u}^- \oplus \glmn \oplus \mf{u}^+$.

Recall that for $\la \in \Pmn$,
$\la^\natural$ is a weight for $\glmn$. Then $\la^\natural + \ell \bf{1}$ can be regarded
as a weight for Lie superalgebras $\glmn$ and $\osp(2m|2n)$ (which share the same
Cartan subalgebra), where
$$
{\bf 1}=\sum_{i=1}^m\delta_i-\sum_{j=1}^n\ep_j.
$$
The simplified proof below of the following theorem of \cite{CZ}
is borrowed from \cite{CKW}.

\begin{theorem} \label{Sp-osp-Howe}
Let $d =2 \ell$. As an $(\Sp(d), \osp(2m|2n))$-module,
$$
S (\C^d \otimes\C^{m|n}) = \bigoplus_{\la\in\mc{P}(m|n),\ell(\la)\le\ell} L({\Sp(d)},
\la) \otimes L (\osp(2m|2n), \la^\natural + \ell \bf{1}).
$$
\end{theorem}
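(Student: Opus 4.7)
My plan is to combine the double centralizer property from Theorem~\ref{Howe:sp-osp} with an explicit exhibition of joint highest weight vectors, in the spirit of the proof of Theorem~\ref{supergl-duality} but with an added twist to locate the $\osp$-highest weight.

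First, since $\Sp(d)$ is a reductive group acting completely reducibly on $S(\C^d\otimes\C^{m|n})$, Theorem~\ref{Howe:sp-osp} forces a multiplicity-free decomposition
\begin{equation*}
S(\C^d \otimes \C^{m|n}) = \bigoplus_{\la \in \Omega} L(\Sp(d), \la) \otimes V^\la,
\end{equation*}
where $\Omega$ is the set of $\Sp(d)$-types appearing and the $V^\la$ are pairwise non-isomorphic irreducible $\osp(2m|2n)$-modules. The remaining task is to pin down $\Omega$ and identify each $V^\la$ as a highest weight module.

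Next, I would determine $\Omega$ by combining the two classical Howe dualities of \cite{H1}: $(\Sp(d),\mf{so}(2m))$ on $S(\C^d\otimes\C^m)$ whose $\Sp(d)$-types are partitions of length $\le\min(\ell,m)$, and $(\Sp(d),\mf{sp}(2n))$ on $\wedge(\C^d\otimes\C^n)$ whose $\Sp(d)$-types are partitions fitting in an $\ell\times n$ box. The factorization $S(\C^d\otimes\C^{m|n}) = S(\C^d\otimes\C^m)\otimes \wedge(\C^d\otimes\C^n)$ together with standard $\Sp(d)$-tensor product rules identifies $\Omega$ with exactly the hook partitions $\{\la \in \Pmn : \ell(\la) \le \ell\}$.

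The main step is to identify each $V^\la$ as $L(\osp(2m|2n), \la^\natural+\ell\mathbf{1})$. A joint highest weight vector (for the Borel of $\osp(2m|2n)$ specified in the statement, with Levi $\glmn$ and nilpotent radical $\mf u^+$ spanned by the $\Delta$'s, together with an appropriately chosen Borel of $\Sp(d)$) must be $\Sp(d)$-highest weight, $\glmn$-highest weight with respect to the standard Borel, and annihilated by the $\Delta$ operators of \eqref{extra gen}. For each admissible $\la$ I would take the $\gl(d)\times \glmn$-highest weight vector produced in Theorem~\ref{gldmn-hwv}, which for $\ell(\la)\le\ell$ involves only the coordinates $x_i^j,\eta_k^j$ with $1\le j\le\ell$. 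Since every $\Delta$ operator in \eqref{extra gen} pairs index $j$ with $d+1-j$ and no such partner appears, these vectors are automatically $\mf u^+$-annihilated. They are $\glmn$-highest weight vectors for the shifted action \eqref{glmn'}, of weight $\la^\natural + \tfrac{d}{2}\mathbf{1} = \la^\natural + \ell\mathbf{1}$, the shift coming precisely from the constant term $\tfrac{d}{2}\delta_{is}$ in \eqref{glmn'}. Choosing a Borel of $\Sp(d)$ compatibly with the embedding $\gl(\ell)\hookrightarrow\Sp(2\ell)$ as a Levi subalgebra ensures these same vectors are $\Sp(d)$-highest of weight $\la$, completing the identification.

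The main obstacle, I expect, is the careful bookkeeping in Step~3: one must verify that a Borel of $\Sp(d)$ can indeed be chosen so that the explicit vectors of Theorem~\ref{gldmn-hwv} remain $\Sp(d)$-highest after restriction from $\gl(d)$, and that the resulting collection exhausts the joint highest weight vectors (matching the count supplied by the abstract Step~2). The shift by $\ell\mathbf{1}$ in the $\osp$-weight is an immediate consequence of \eqref{glmn'} and requires no extra work.
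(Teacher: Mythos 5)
Your Step~3 is essentially the same key computation as the paper's: for $\ell(\la)\le\ell$, the joint $\gl(d)\times\glmn$-highest weight vector from Theorem~\ref{gldmn-hwv} involves only coordinates $x^j_i,\eta^j_k$ with $j\le\ell$, hence is killed by the $\Delta$-operators of \eqref{extra gen}, and the constant term $\tfrac{d}{2}\delta_{is}$ in \eqref{glmn'} produces the shift by $\ell\mathbf{1}$. That part is sound and matches the paper.

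The genuine gap is Step~2. You propose to read off $\Omega$ from the factorization $S(\C^d\otimes\C^{m|n})\cong S(\C^d\otimes\C^m)\otimes\wedge(\C^d\otimes\C^n)$ and ``standard $\Sp(d)$-tensor product rules.'' But tensor products of $\Sp(d)$-irreducibles are not governed by a simple combinatorial rule of the kind you need: they involve the Littlewood restriction/universal character machinery, and extracting from it the precise claim that the $\Sp(d)$-types occurring are \emph{exactly} the $(m|n)$-hook partitions of length $\le\ell$ is a nontrivial combinatorial theorem, not a routine verification. In fact you do not even need to know the full tensor product rules; what you need is both inclusions between $\Omega$ and the hook set, and your argument supplies neither rigorously: it does not explain why partitions $\la$ with $\ell(\la)\le\ell$ but $\la_{m+1}>n$ are excluded.

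The paper's proof circumvents this entirely by passing to the space of $\mf{u}^+$-harmonics ${}^{\Sp}H$, first treating the limit $n=\infty$ where the hook constraint is vacuous: there the only constraint is $\ell(\la)\le\ell$, all $\Sp(d)$-types occur (via the same explicit vectors you use), and multiplicity-freeness from Theorem~\ref{Howe:sp-osp} pins down the decomposition. The finite-$n$ case is then obtained by the observation ${}^{\Sp}H\subseteq{}^{\Sp}H^\infty$ together with setting $\eta^j_k=0$ for $k>n$: inspection of the explicit joint highest weight vectors shows that precisely those attached to $(m|n)$-hook partitions survive the truncation. Replacing your Step~2 by this limit-and-truncate argument closes the gap; alternatively, you would need to actually prove the $\Sp(d)$-type count by a genuine tensor product analysis, which is considerably more work than the word ``standard'' suggests.
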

\begin{proof}
An element $f\in\Cx$ is called {\em harmonic}, if $f$ is
annihilated by the subalgebra $\mf{u}^+$. The space of harmonics
will be denoted by ${}^{\Sp}H$ and it evidently admits an action
of $\Sp(d)\times \glmn$. Furthermore, since
$S(\C^{d}\otimes\C^{m|n})$ is a completely reducible
$\glmn$-module, $L(\osp(2m|2n),\mu)^{\mf{u}^+}$ is also completely
reducible over $\glmn$, for any irreducible $\osp(2m|2n)$-module
$L(\osp(2m|2n),\mu)$ that appears in $S(\C^{d}\otimes\C^{m|n})$.
By irreducibility of $L(\osp(2m|2n),\mu)$ we must have
\begin{align*}
L(\osp(2m|2n),\mu)^{\mf{u}^+}\cong L_{m|n}(\mu).
\end{align*}
So, by Theorem~\ref{Howe:sp-osp} $(\text{Sp}(d),\glmn)$ forms a dual pair on the space of
harmonics ${}^{\Sp}H$. Thus, proving the theorem is equivalent to establishing the
following decomposition of ${}^{\Sp}H$ as an $\Sp(d)\times \glmn$-module:
\begin{equation}  \label{harmonic}
{}^{\Sp}H\cong\bigoplus_{\la \in \Pmn, \ell(\la) \le \ell}
L({\Sp(d)}, \la) \otimes L_{m|n}(\la^\natural + \ell \bf{1}).
\end{equation}

We first consider the limit case $n=\infty$ with the space of hamonics denoted by ${}^\Sp
H^\infty$. Here the only restriction on $\la$ is $\ell(\la)\le \ell$, and we observe that the
vector given in Theorem \ref{gldmn-hwv} associated to such a partition $\la$ is indeed
annihilated by $\mf{u}^+$ and hence is a joint $\text{Sp}(d)\times\gl(m|n)$-highest
weight vector of weight $(\la,\la^\natural+\ell\bf{1})$.  Hence all the summands on the
right hand side of (\ref{harmonic}) occur in the space of harmonics, and in particular, all
irreducible representations of $\Sp(d)$ occur. Therefore, we have established (\ref{harmonic}) in
this case.

Now consider the finite $n$ case. We may regard $S(\C^d\otimes\C^{m|n})\subseteq
S(\C^d\otimes\C^{m|\infty})$ with compatible actions
$\osp(2m|2n)\subseteq\osp(2m|2\infty)$.  From the formulas of the $\Delta$-operators in
(\ref{extra gen}) we see that ${}^\Sp H\subseteq{}^\Sp H^\infty$. Thus the space ${}^\Sp H$ is
obtained from ${}^\Sp H^\infty$ by setting the variables $\eta^i_k=0$, for $k>n$. However, it
is clear, from the explicit formulas of the joint highest vectors in ${}^\Sp H^\infty$, that
when setting the variables $\eta^i_k=0$, for $k>n$, precisely those vectors corresponding
to $(m|n)$-hook partitions will survive. This completes the proof.
\end{proof}

\subsection{The Howe dual pair $(\Sp(d), \ccc)$}

Set $d=2\ell$. Let $\mathbb{C}^{\infty}$ be the vector space over $\C$ with basis
$\{\,e_i\,|\,i\in\mathbb{Z}\,\}$.  Let $\gl_\infty$ denote the
Lie algebra consisting of matrices $(a_{ij})_{i,j\in\Z}$ with finitely many non-zero $a_{ij}$'s,
and $\C^\infty$ is the natural $\gl_\infty$-module. Then
$\gl_\infty$ has a linear basis given by the matrix units $E_{ij}$ ($i,j\in\Z$). Denote by
$\widehat{\gl}_\infty=\gl_\infty\oplus\C K$ the central extension of $\gl_\infty$ by a
one-dimensional center $\C K$ given by the $2$-cocycle
\begin{equation*}\label{cocycle}
\tau (A,B):={\rm Tr}([J,A]B),
\end{equation*}
where $J=\sum_{i\le 0}E_{ii}$. The Lie algebra $\widehat{\gl}_\infty$ admits a
$\Z$-grading $\widehat{\gl}_\infty =\bigoplus_{i \in \Z} \widehat{\gl}_{\infty,\pm i}$ by
letting $\deg E_{ij} =j-i$ and $\deg K=0$, and this induces a triangular decomposition of
$\widehat{\gl}_\infty$, with $\widehat{\gl}_{\infty,\pm} =\bigoplus_{i
>0} \widehat{\gl}_{\infty,\pm i}$.

\begin{xca} Show that the cocycle $\tau$ on $\gl_\infty$ is a coboundary and hence
$\widehat{\ga}_\infty$ is isomorphic to $\gl_\infty\oplus\C K$ as Lie algebras.
\end{xca}

Let $\overline{\mf{c}}_{\infty}$ be the subalgebra of ${\gl}_\infty$ preserving the
following bilinear form on $\mathbb{C}^{\infty}$:
\begin{equation*}
 (e_i|e_j)=
(-1)^i\delta_{i,1-j},  \quad i,j\in\Z.
\end{equation*}
Let $\ccc =\overline{\mf{c}}_{\infty}\oplus \mathbb{C}K$ be the central extension of
$\overline{\mf{c}}_{\infty}$ determined by the restriction of the two-cocycle $\tau$
above. Then $\ccc$ is an infinite-rank affine Kac-Moody algebra with Dynkin diagram as
follows:
\begin{center}
\hskip -3cm \setlength{\unitlength}{0.16in}
\begin{picture}(24,3)
\put(5.7,2){\makebox(0,0)[c]{$\bigcirc$}} \put(8,2){\makebox(0,0)[c]{$\bigcirc$}}
\put(10.4,2){\makebox(0,0)[c]{$\bigcirc$}} \put(15.3,2){\makebox(0,0)[c]{$\cdots$}}
\put(6.8,2){\makebox(0,0)[c]{$\Longrightarrow$}}
\put(8.4,2){\line(1,0){1.55}} \put(10.82,2){\line(1,0){1.66}}
\put(13.5,2){\line(1,0){1.2}}
\put(13,1.95){\makebox(0,0)[c]{$\bigcirc$}} \put(5.7,1){\makebox(0,0)[c]{\tiny $\alpha_0$}}
\end{picture}
\end{center}
The Lie algebra $\mf{c}_{\infty}$ has a natural triangular decomposition induced from
$\widehat{\mathfrak{gl}}_{\infty}$:
\begin{equation*}
\mf{c}_{\infty}= \mf{c}_{\infty,-}\oplus \mathfrak{h} \oplus
\mf{c}_{\infty,+},
\end{equation*}
where the Cartan subalgebra of $\ccc$ is spanned by $K$ and
\begin{equation*}
\widetilde{E}_i := E_{ii}-E_{1-i,1-i}, \qquad i \in \N.
\end{equation*}
Let $\epsilon_i \in \mathfrak{h}^*$ be so that
$\langle\epsilon_i,\widetilde{E}_j\rangle=\delta_{ij}$, for $i,j\in \mathbb{N}$. For
$i\in \Z_+$, we denote by $\La^\mf{c}_i$ the $i$-th fundamental weight for $\ccc$. Then
$\La^\mf{c}_0\in \mf{h}^*$ is determined by
$\langle\La^\mf{c}_0,\widetilde{E}_i\rangle=0$ for $i\in\N$ and
$\langle\La^\mf{c}_0,K\rangle=1$, and
\begin{align*}
\La^{\mf c}_i&=\La^{\mf c}_0+\epsilon_1+\ldots+\epsilon_i, \qquad
\ i\geq 1.
\end{align*}
Let $L(\mathfrak{c}_{\infty}, \mu)$ denote the irreducible highest weight module with
respect to the Borel subalgebra $ \mathfrak{h} \oplus \mf{c}_{\infty ,+}$ of highest
weight $\mu$.

\begin{theorem} \cite[Theorem 3.4]{Wa}
There exists a Howe dual pair (${\rm Sp}(d),  \ccc$) acting on the
fermionic Fock space $\mf F^{\ell}$ of $\ell$ pairs of complex
fermions, where $d =2 \ell$. Moreover, as an $(\Sp(d),
\ccc)$-module,
\begin{equation} \label{eq:dual-c}
\mf F^{\ell}\cong\bigoplus_{\ell(\la) \le \ell} L({\rm Sp}(d), \la)
\otimes L(\mathfrak{c}_{\infty},\La^{\mf c}(\la)),
\end{equation}
where $\Lambda^{\mf c}(\la) :=\frac{d}{2}\La^{\mf c}_0+\sum_{k\geq 1}\la'_k\epsilon_k =
\sum_{k=1}^{\frac{d}{2}} \Lambda^{\mf c}_{\la_k}$.
\end{theorem}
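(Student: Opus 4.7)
My plan is to realize $\mf F^\ell$ concretely as the Fock representation of $\ell$ pairs of charged free fermions $\{\psi^{\pm,a}_n\}_{1\le a\le\ell,\,n\in\Z+\hf}$ generating a Clifford algebra, with vacuum $|0\rangle$ annihilated by all positive modes. Standard constructions produce a representation of $\widehat{\gl}_\infty$ on $\mf F^\ell$ via normally ordered fermion bilinears $\sum_a\,:\!\psi^{+,a}_m\psi^{-,a}_n\!:\,$, and restricting to those bilinears preserving the form that defines $\overline{\mf c}_\infty\subset\gl_\infty$ produces the $\ccc$-action with central element $K$ acting by the scalar $\ell=d/2$. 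Commuting with this is an $\Sp(d)=\Sp(2\ell)$-action that mixes the fermion species, with symplectic form on $\C^{2\ell}$ prescribed by the pairing $\psi^{+,a}\leftrightarrow\psi^{-,a}$. Commutativity is a direct computation in the Clifford algebra, and the double centralizer property follows from the first fundamental theorem of invariant theory for $\Sp(d)$: all $\Sp(d)$-invariants in $\End(\mf F^\ell)$ are generated by the quadratic ones, which span precisely the image of $\ccc$.

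By Schur's lemma, the commutant property, and the fact that $\Sp(d)$ acts locally finitely on $\mf F^\ell$ (it preserves the total fermion-number grading, each graded piece being finite-dimensional), one obtains a multiplicity-free decomposition
$$\mf F^\ell\cong\bigoplus_{(\la,\mu)\in\mc S}L(\Sp(d),\la)\otimes L(\ccc,\mu)$$
indexed by some set $\mc S$ of pairs. To determine $\mc S$, I would, for each partition $\la$ with $\ell(\la)\le\ell$, construct an explicit joint highest weight vector $v_\la\in\mf F^\ell$, analogous in spirit to the construction of Theorem~\ref{gldmn-hwv}. The natural candidate is built column by column from the Young diagram $\la$: along the $k$-th column (of length $\la'_k$) one applies a Pfaffian-type antisymmetrization of $\psi^+$-creation operators drawing on exactly $\la'_k$ of the $\ell$ fermion species. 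Such a vector is manifestly $\Sp(d)$-highest of weight $\la$, and a direct evaluation of the $\ccc$-Cartan on it yields $\epsilon_k$-coordinate equal to $\la'_k$; since the vacuum contributes $\tfrac{d}{2}\La^{\mf c}_0$, the resulting $\ccc$-weight is exactly $\La^{\mf c}(\la)=\tfrac{d}{2}\La^{\mf c}_0+\sum_{k\ge 1}\la'_k\epsilon_k$.

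The main obstacle is verifying that each $v_\la$ is genuinely $\ccc$-highest, rather than merely $\h$-homogeneous. Annihilation by the Chevalley generators attached to the finite-type simple roots is essentially built into the column-wise antisymmetrization, but annihilation by the affine-type generator $e_{\alpha_0}$ requires a more careful Wick-contraction argument. A complementary route, which I would use as a cross-check, is to invoke the $(\Sp(d),\osp(2m|2n))$-duality of Theorem~\ref{Sp-osp-Howe} and pass to the limit $m,n\to\infty$: there is a natural embedding $\osp(2m|2n)\hookrightarrow\ccc$ compatible with the Fock realizations (one recovers $\osp(2m|2n)$ by truncating the mode index range), and under this embedding the highest weights $\la^\natural+\ell\mathbf{1}$ stabilize to $\La^{\mf c}(\la)$, while the condition $\ell(\la)\le\ell$ persists in the limit. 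Either route concludes, with a final character match via a Cauchy-type identity in infinitely many variables sealing that no further pairs contribute to $\mc S$.
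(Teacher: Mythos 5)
The paper does not prove this statement: it is quoted verbatim from \cite[Theorem 3.4]{Wa}, and the authors explicitly say they will ``skip the precise definitions of the fermionic Fock space $\mf F^{\ell}$ and the commuting actions,'' using only the character identity \eqref{combid-classical-c} downstream. So there is no in-paper proof to compare against; your primary route (Clifford-algebra realization, normally ordered bilinears giving $\ccc$ at level $\ell=d/2$, first fundamental theorem for $\Sp(d)$ to get the double centralizer, multiplicity-free decomposition, explicit joint highest weight vectors indexed by partitions with $\ell(\la)\le\ell$, weight computation giving $\La^{\mf c}(\la)$) is exactly the standard argument carried out in \cite{Wa}, and in outline it is sound. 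You correctly isolate the two points that actually require work: showing each candidate $v_\la$ is annihilated by all of $\mf c_{\infty,+}$ (in particular by the generator attached to $\alpha_0$, which is an annihilation--annihilation bilinear and needs a genuine Wick computation), and showing the list is complete, which is most cleanly done by a character/Cauchy-identity count as you suggest.

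One element of your proposal would fail as stated: the ``complementary route'' via an embedding $\osp(2m|2n)\hookrightarrow\ccc$ compatible with the Fock realizations. No such embedding exists: $\ccc$ is an ordinary (purely even) Lie algebra acting on the purely fermionic space $\mf F^{\ell}$, whereas $\osp(2m|2n)$ with $m>0$ is a superalgebra acting on the mixed space $S(\C^d\otimes\C^{m|n})$, and these two dualities are related only combinatorially, through the involution $\omega$ on symmetric functions applied to characters (this is precisely how the paper passes from \eqref{combid-classical-c} to Theorem~\ref{superchar:OSP}, in the direction opposite to the one you propose). The correct stabilization argument uses the $m=0$ members of the family: the classical $(\Sp(d),\mf{sp}(2n))$-duality on $\wedge(\C^d\otimes\C^n)$, with $\mf{sp}(2n)\subset\ccc$ and $\wedge(\C^d\otimes\C^n)\subset\mf F^{\ell}$ compatibly, letting $n\to\infty$. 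If you replace your cross-check by this one, and carry out the $e_{\alpha_0}$-annihilation computation (your highest weight vectors should simply be monomials in the $\psi^{+}$-creation operators, one column of $\la$ per energy level; no extra Pfaffian antisymmetrization is needed since fermionic creation operators already anticommute), the proof is complete and agrees with the cited source.
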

As all we need later on is the combinatorial identity \eqref{combid-classical-c} below,
we will skip the precise definitions of the fermionic Fock space $\mf F^{\ell}$ and the
commuting actions of ${\rm Sp}(d)$ and $\ccc$ on $\mf F^{\ell}$  (see \cite{Wa} for
detail).

Recalling (\ref{gld}) we set $\widetilde{E}^{i}:={{E}^{ii}-E^{d-i+1,d-i+1}}$, for
$i=1,\ldots,\frac{d}{2}$. Computing the trace of $\prod_{n\in
\N}x_n^{\widetilde{E}_n}\prod_{i=1}^\frac{d}{2} z_i^{{\widetilde{E}^{i}}}$ on both sides
of \eqref{eq:dual-c}, we have
\begin{equation}\label{combid-classical-c}
\prod_{i=1}^\frac{d}{2}\prod_{n\in\N}(1+x_nz_i)(1+x_{n}z^{-1}_{i})=\sum_{\ell(\la)
\le \ell} {\rm ch}L({\rm Sp}(d), \la)\; {\rm
ch}L(\mathfrak{c}_{\infty},\Lambda^{\mf c}(\la)).
\end{equation}

\subsection{Irreducible $\ccc$-characters}

Let $W$ (respectively, $W_0$) be the Weyl group of $\ccc$ (respectively, of the Levi
subalgebra of $\ccc$ corresponding to the removal of the simple root $\alpha_0$). Let
$W^0_k$ be the set of the minimal length representatives of the right coset space
$W_0\backslash W$ of length $k$. Then it is standard to write $W =W_0W^0$ with $W^0
=\bigsqcup_{k \ge 0} W^0_k$.  It follows that, for each $w\in W^0_k$, we may find a
partition $\la_w=((\la_w)_1,(\la_w)_2,\ldots)$ such that
\begin{equation}\label{lambdaw}
w(\La^{\mf c}(\la)+\rho)-\rho= \frac{d}{2}\Lambda^{\mf
c}_0+\sum_{j>0}(\la_w)_j\epsilon_j.
\end{equation}
The following is obtained by applying the Kostant homology formula for integrable modules
over Kac-Moody algebras and the Euler-Poincar\'e principle (cf.~\cite[Proposition~2.5]{CKW}).

\begin{proposition} \label{char:schur}
We have  the following character formula:
\begin{equation*}
{\rm ch}L(\ccc,\Lambda^{\mathfrak c}(\la))=\frac{1}{ \prod_{1 \le
i\le j}(1-x_ix_j)}\sum_{k=0}^\infty \sum_{w\in W^0_k }
 (-1)^k s_{\la_w}(x_1,x_2,\ldots).
\end{equation*}
\end{proposition}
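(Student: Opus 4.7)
The plan is to exploit the parabolic decomposition of $\ccc$ obtained by removing the simple root $\alpha_0$. Write $\ccc = \mf u^- \oplus \mf l \oplus \mf u^+$, where $\mf l$ is the Levi subalgebra (isomorphic to $\gl_\infty$ extended by the central element $K$, with $W_\mf l = W_0$ as its Weyl group), and $\mf u^\pm$ are the nilradicals whose weights are the non-Levi positive (respectively, negative) roots of $\ccc$. These weights are of the form $\pm(\epsilon_i + \epsilon_j)$ with $1\le i\le j$, each occurring with multiplicity one. In particular the characters of the exterior algebras satisfy
\[
\sum_{k\ge 0}(-1)^k\, \ch\bigl(\wedge^k \mf u^-\bigr)\;=\;\prod_{1\le i\le j}(1-x_ix_j),
\]
after the standard identification of $x_i$ with an exponential of $\epsilon_i$ (up to a sign convention).

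Next, I would invoke the Kostant-type homology theorem for integrable highest weight modules of the Kac-Moody algebra $\ccc$ with respect to the parabolic $\mf p=\mf l\oplus \mf u^+$. Applied to the integrable module $L(\ccc,\Lambda^{\mf c}(\la))$, it yields the $\mf l$-module decomposition
\[
H_k\bigl(\mf u^-,\, L(\ccc,\Lambda^{\mf c}(\la))\bigr)
\;\cong\;\bigoplus_{w\in W_k^0} L_\mf l\bigl(w\cdot \Lambda^{\mf c}(\la)\bigr),
\]
where $w\cdot\mu = w(\mu+\rho)-\rho$. By construction of $\la_w$ in \eqref{lambdaw}, the $\gl_\infty$-highest weight appearing in $L_\mf l(w\cdot\Lambda^{\mf c}(\la))$ is precisely the partition $\la_w$, and hence its character (in the $x_i$-variables) is the Schur function $s_{\la_w}(x_1,x_2,\ldots)$.

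Finally, I would apply the Euler-Poincar\'e principle to the Chevalley-Eilenberg chain complex $(\wedge^\bullet \mf u^-\otimes L(\ccc,\Lambda^{\mf c}(\la)),\partial)$ computing $H_\bullet(\mf u^-,L(\ccc,\Lambda^{\mf c}(\la)))$. Equating the alternating sums of characters of the chain groups and of the homology gives
\[
\ch L(\ccc,\Lambda^{\mf c}(\la))\cdot \prod_{1\le i\le j}(1-x_ix_j)
\;=\;\sum_{k\ge 0}(-1)^k\sum_{w\in W^0_k} s_{\la_w}(x_1,x_2,\ldots),
\]
which after dividing by the product on the left yields the proposition.

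The main obstacle is justifying the Kostant homology formula in the present infinite-rank setting: one must topologize the chain spaces appropriately (they and the resulting homology groups are a priori infinite-dimensional), verify convergence and $\mf l$-semisimplicity, and check that the Euler-Poincar\'e cancellation still holds at the level of formal characters in the $x_i$ variables. This is precisely the technical heart, and it is handled via the standard Garland-Lepowsky machinery, transcribed to this parabolic setting essentially as in \cite[Proposition 2.5]{CKW}; the remainder of the argument is a direct matching of both sides.
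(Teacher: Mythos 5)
Your proof is correct and follows essentially the same route as the paper, which itself proves the proposition by citing the Kostant homology formula for integrable modules over Kac-Moody algebras together with the Euler-Poincar\'e principle (cf.\ \cite[Proposition~2.5]{CKW}). You correctly identify the parabolic decomposition obtained by deleting $\alpha_0$, the weights $\epsilon_i+\epsilon_j$ ($i\le j$) of the nilradical giving the denominator $\prod_{1\le i\le j}(1-x_ix_j)$, the Garland-Lepowsky description of $H_k(\mf u^-,L(\ccc,\Lambda^{\mf c}(\la)))$ in terms of $W^0_k$ and the partitions $\la_w$ of \eqref{lambdaw}, and you flag exactly the infinite-rank technical point that the reference to \cite{CKW} is meant to dispatch.
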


\begin{xca}
Prove the existence of $\la_w$ for each $w\in W^0$ in (\ref{lambdaw}).
\end{xca}
\subsection{The irreducible $\osp(2m|2n)$-characters}

The following character formula for $L(\osp(2m|2n),\la^\natural +
\ell \bf{1})$ in a different  form was first obtained in
\cite{CZ}. The proof here follows \cite{CKW} and is simpler.
\begin{theorem}\label{superchar:OSP}
For $\la\in\Pmn$ such that $\ell(\la) \le \ell$, we have
\begin{align*}
&{\rm ch}L(\osp(2m|2n),\la^\natural + \ell \bf{1}) =\\
& \left( \frac{y_1\cdots y_m}{x_1\cdots x_n} \right)^\ell
\frac{\prod\limits_{1\le i \le m}\prod\limits_{1\le s  \le
n}(1+y_ix_s)} {\prod\limits_{1\le i < j\le m}\prod\limits_{1\le s
\le t\le n}(1-y_iy_j)(1-x_sx_t)} \sum_{k=0}^\infty \sum_{w\in
W^0_k}(-1)^k hs_{\la_w}(\x,\y).
\end{align*}
\end{theorem}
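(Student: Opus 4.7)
The plan is to combine the $(\Sp(d),\osp(2m|2n))$-Howe duality of Theorem \ref{Sp-osp-Howe} with the $(\Sp(d),\ccc)$-character identity \eqref{combid-classical-c} and the $\ccc$-character formula of Proposition \ref{char:schur}. The bridge between the infinite alphabet of the $\ccc$-theory and the finite hook variables $(\x,\y)$ will be furnished by a plethystic substitution on symmetric functions.

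First I would compute $\ch S(\C^d\otimes\C^{m|n})$ directly. Using $S(\C^d\otimes\C^{m|n})=S(\C^d\otimes\C^{m})\otimes\wedge(\C^d\otimes\C^{n})$ together with the $\ell\mathbf{1}$-shift built into \eqref{glmn'}, and specializing the $\gl(d)$-diagonal to the $\Sp(d)$-Cartan via $u_i\mapsto z_i,\,u_{d+1-i}\mapsto z_i^{-1}$ for $i=1,\ldots,\ell$, one obtains
\begin{equation*}
\ch S(\C^d\otimes\C^{m|n})=\Bigl(\frac{y_1\cdots y_m}{x_1\cdots x_n}\Bigr)^{\!\ell}\prod_{i=1}^{\ell}\frac{\prod_{b=1}^{n}(1+z_ix_b)(1+z_i^{-1}x_b)}{\prod_{a=1}^{m}(1-z_iy_a)(1-z_i^{-1}y_a)}.
\end{equation*}
By Theorem \ref{Sp-osp-Howe} this also equals $\sum_{\la\in\Pmn,\,\ell(\la)\le\ell}\ch L(\Sp(d),\la)\cdot\ch L(\osp(2m|2n),\la^\natural+\ell\mathbf{1})$.

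Next I would introduce the plethystic substitution $t\mapsto\x/\y$, defined on power sums by $p_k(t)\mapsto p_k(\x)+(-1)^{k-1}p_k(\y)$. A short logarithmic computation shows that this substitution sends $\prod_{i=1}^{\ell}\prod_{n\ge 1}(1+z_it_n)(1+z_i^{-1}t_n)$ precisely to the product displayed above, and sends the denominator $\prod_{i\le j}(1-t_it_j)^{-1}$ appearing in Proposition \ref{char:schur} to the mixed $C/D$-type expression
\begin{equation*}
\frac{\prod_{a=1}^{m}\prod_{b=1}^{n}(1+y_ax_b)}{\prod_{1\le a<a'\le m}(1-y_ay_{a'})\prod_{1\le b\le b'\le n}(1-x_bx_{b'})},
\end{equation*}
which is exactly the non-shift prefactor in the claim. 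Moreover, by the classical identity $s_\la(\x/\y)=hs_\la(\x,\y)$, each Schur function $s_{\la_w}$ in Proposition \ref{char:schur} becomes the hook Schur polynomial $hs_{\la_w}(\x,\y)$. Thus the substituted form of \eqref{combid-classical-c} expresses the same product as above as a sum, indexed by $\la$ with $\ell(\la)\le\ell$, of $\ch L(\Sp(d),\la)$ multiplied by precisely the RHS of the theorem divided by $(y_1\cdots y_m/x_1\cdots x_n)^{\ell}$.

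The final step is to equate the two expressions for $\ch S(\C^d\otimes\C^{m|n})$ and invoke the linear independence of the characters $\ch L(\Sp(d),\la)$ for partitions with $\ell(\la)\le\ell$ in the $\Sp(d)$-character ring to read off the formula term by term. The main technical obstacle is the plethystic bookkeeping that converts the single $C_\infty$-type denominator $\prod_{i\le j}(1-t_it_j)^{-1}$ into the asymmetric expression with strict inequality on the $\y$-factors and weak inequality on the $\x$-factors: this asymmetry is driven by the sign $(-1)^{2r-1}=-1$ in $p_{2r}(t)\mapsto p_{2r}(\x)-p_{2r}(\y)$, which cancels the diagonal $a=a'$ terms on the $\y$-side while reinforcing the diagonal $b=b'$ terms on the $\x$-side. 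Once this identity is verified, the remaining argument is essentially formal.
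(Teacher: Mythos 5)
Your proposal is correct and follows essentially the same route as the paper: equate the $(\Sp(d),\osp(2m|2n))$-Howe duality character identity with the specialization of \eqref{combid-classical-c} via Proposition \ref{char:schur}, and conclude by linear independence of the $\Sp(d)$-characters. The single plethystic substitution $p_k\mapsto p_k(\x)+(-1)^{k-1}p_k(\y)$ you use is exactly the paper's composite of splitting the infinite alphabet, applying the involution $\omega$ in the second set of variables, and truncating, and your explicit verification of its effect on the $\mf c_\infty$-denominator $\prod_{i\le j}(1-t_it_j)^{-1}$ correctly supplies the one computation the paper leaves implicit.
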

\begin{proof}
Computing the trace of the operator
$\prod_{i,j}y_i^{\wt{E}_{i}}x_j^{\wt{E}_{\bar{j}}}\prod_{k=1}^{\frac{d}{2}}
z_k^{{\widetilde{E}^{k}}}$ on both sides of the isomorphism in Theorem~\ref{Sp-osp-Howe},
where $1\le i\le m$  and $1\le j\le n$, we obtain
\begin{align}\label{ops2m2n:totalchar}
& \prod_{k=1}^{\frac{d}{2}}\prod_{i,j}
\frac{(1+x_iz_k^{-1})(1+x_iz_k)}{(1-y_jz_k^{-1})(1-y_{j}z_k)}
  \nonumber  \\
= &\left( \frac{y_1\cdots y_m}{x_1\cdots x_n} \right)^{-\ell}
\sum_{\substack{\la\in \Pmn \\ \ell(\la) \le \ell}} {\rm ch}L({\rm
Sp}(d), \la)\; {\rm ch}L(\osp(2m|2n),\la^\natural + \ell \bf{1}).
\end{align}
Replacing $\text{ch}\, L(\ccc, \La^{\mf{c}}(\la))$ in \eqref{combid-classical-c} by the
expression in Proposition~\ref{char:schur}, we obtain an identity of symmetric functions
in variables $x_1, x_2,\ldots$. Next, we replace $x_{n+i}$ by $y_i$ for $i\in\N$, and
then apply to this identity the involution on the ring of symmetric functions in
$y_1,y_2,\ldots$.  Finally, we put $y_{j}=0$ for $j\geq m+1$. Under the composition of
those maps, we obtain a new identity which shares the same left hand side as
\eqref{ops2m2n:totalchar}. Comparing the right hand sides of this new identity and of
\eqref{ops2m2n:totalchar}, we obtain the result thanks to the linear independence of
${\rm ch}L({\rm Sp}(d), \la)$.
\end{proof}

\begin{remark} \label{rem:char}
The above irreducible character formula can be understood as an alternating sum of the
characters of parabolic Vermas over a certain infinite Weyl group. A similar observation
was made for type $A$ in \cite[Corollary 4.15]{CZ1}.

The Kostant $\mf u$-cohomology groups for these $\ga$-modules can be further computed and
described in terms of the infinite Weyl groups, and in particular, they are
multiplicity-free as modules over Levi subalgebras \cite{CKW} (also cf.~\cite{CK}). In
other words, the corresponding Kazhdan-Lusztig polynomials via $\mf u$-cohomology in the
sense of Vogan \cite{Vo} are monomials.
\end{remark}

\section{Super duality}
\label{sec:SD}

The goal of the remaining part of the lectures is to formulate precisely a direct
connection between representation theories of Lie algebras and Lie superalgebras,
following \cite{CLW} (cf. \cite{CWZ, CW4, CL}). This is in part supported by the
following.
\begin{enumerate}
\item The $\gl(1|1)$-principal block can be understood in terms of
    $A_\infty$-quivers, which is classical in its own way.

\item There exist intimate connections between finite-dimensional classical Lie
    superalgebras and infinite Weyl groups (cf. Remark~\ref{rem:char}).

\item By a purely algebraic approach \cite{Br}, Brundan showed
that
    the category of finite-dimensional $\glmn$-modules categorifies the Fock space
    $\wedge^m \mathbb V \otimes \wedge^n \mathbb V^*$, where $\mathbb V$ denotes the
    natural $U_q(\gl (\infty))$-module.

\item The Brundan-Kazhdan-Lusztig polynomials for some suitable parabolic category $\mc O$ of
    super type $A$ are shown in \cite{CWZ, CW4} to coincide with the usual
    Kazhdan-Lusztig polynomials of type $A$. For the so-called polynomial
    representations this was already observed in \cite{CZ1}. Moreover, the super
    duality conjecture \cite{CW4} (generalizing \cite{CWZ}) on a category equivalence
    between Lie superalgebras and Lie algebras of type $A$ has been established
    recently in \cite{CL}.
\end{enumerate}
\subsection{Parabolic category $\mc O$}
\label{sec:O}

Assume that $\mf k$ is a (possibly Kac-Moody, and or possibly infinite rank) Lie
(super)algebra, with Cartan subalgebra $\h$ and root system $\Phi$. Let $I$ be a set of
simple roots, and $\mf b$ the corresponding Borel subalgebra. Let $Y$ be a subset of $I$
and $\mf l_Y$ be the associated Levi subalgebra of $\mf k$. Let $\mf k = \mf u^- \oplus
\mf l_Y \oplus \mf u^+$ be the generalized triangular decomposition. Denote by $P$ the
weight lattice for $\mf k$, and by $P_Y \subseteq P$ the subset of weights which are
$Y$-dominant.

Given $\la\in{\h}^*$, we denote by $L({\mf{l}}_Y,\la)$ the irreducible
${\mf{l}}_Y$-module of highest weight $\la$ with respect to $\mf{l}_Y\cap\mf{b}$, which
extends to a $(\mf l_Y + \mf u^+)$-module with a trivial action of $\mf u^+$. Define the
parabolic Verma $\mf k$-module
\begin{equation*}
\Delta (\la):= U(\mf k) \otimes_{U (\mf l_Y + \mf u^+)}
L({\mf{l}}_Y,\la).
\end{equation*}
Let ${L}(\la)$ be the irreducible quotient $\mf k$-module of $\Delta (\la)$.

Let $\mc{O}=\mc O(\mf k, Y, P_Y)$ be the category of $\ga$-modules ${M}$ such that ${M}$
is a semisimple ${\h}$-module with finite-dimensional weight subspaces $M_\gamma$,
$\gamma\in {\h}^*$, satisfying
\begin{itemize}
\item[(i)] ${M}$ decomposes over ${\mf{l}_Y}$ into a direct sum of
    $L({\mf{l}_Y},\mu)$ for $\mu\in {P}_Y$.

\item[(ii)] There exist finitely many weights $\la_1,\la_2,\ldots,\la_k\in{P}_Y$
    (depending on ${M}$) such that if $\gamma$ is a weight in ${M}$, then
    $\gamma\in\la_i-\sum_{\alpha\in{I}}\Z_+\alpha$, for some $i$.
\end{itemize}
The choice of $P_Y$ should be natural and general enough so that
$L(\la), \Delta(\la) \in \mc O(\mf k, Y, P_Y)$ for every $\la \in
P_Y$. This will be spelled out clearly in the cases of interest
later on.

\begin{remark}  \label{KL}
In the case when $\mf k$ is a finite-dimensional semisimple Lie algebra, or $\gl (n)$, or
of the four classical infinite-rank affine Kac-Moody Lie algebras $\aaa$, $\bbb$, $\ccc$,
$\ddd$, each block is controlled by the Weyl group $W$ of $\mf k$. For the principal
block (in the full category $\mc O$), the transition matrix between $[L(\la)]$ and
$[\Delta (\mu)]$ is given by the Kazhdan-Lusztig polynomials associated to $W$ (see
Tanisaki's lectures \cite{Ta}). Other blocks and parabolic cases are reduced to the
principal block.
\end{remark}

\subsection{The master Lie (super)algebras}\label{masters}

Let \makebox(23,0){$\oval(20,12)$}\makebox(-20,8){$\mf{x}$} denote
one of the following four classical Dynkin diagrams of types
$\mf{a,b,c,d}$, which will be referred to as a {\em head diagram}:
\begin{center}
\hskip -1cm \setlength{\unitlength}{0.16in}
\begin{picture}(24,3)
\put(8,2){\makebox(0,0)[c]{$\bigcirc$}} \put(10.4,2){\makebox(0,0)[c]{$\bigcirc$}}
\put(14.85,2){\makebox(0,0)[c]{$\bigcirc$}} \put(17.25,2){\makebox(0,0)[c]{$\bigcirc$}}
\put(19.4,2){\makebox(0,0)[c]{$\bigcirc$}} \put(5.6,2){\makebox(0,0)[c]{$\bigcirc$}}
\put(8.4,2){\line(1,0){1.55}} \put(10.82,2){\line(1,0){0.8}}
\put(13.2,2){\line(1,0){1.2}} \put(15.28,2){\line(1,0){1.45}}
\put(17.7,2){\line(1,0){1.25}} \put(6,2){\line(1,0){1.4}}
\put(12.5,1.95){\makebox(0,0)[c]{$\cdots$}}
\put(-.5,2){\makebox(0,0)[c]{{\ovalBox(1.6,1.2){$\mf{a}$}}}}
\put(5.7,1){\makebox(0,0)[c]{\tiny$\delta_1-\delta_{2}$}}
\put(8.5,1){\makebox(0,0)[c]{\tiny$\delta_{2}-\delta_{3}$}}
\put(19.6,1){\makebox(0,0)[c]{\tiny$\delta_{m-1}-\delta_{m}$}}
\end{picture}
\end{center}
\begin{center}
\hskip -1cm \setlength{\unitlength}{0.16in}
\begin{picture}(24,2)
\put(8,2){\makebox(0,0)[c]{$\bigcirc$}} \put(10.4,2){\makebox(0,0)[c]{$\bigcirc$}}
\put(14.85,2){\makebox(0,0)[c]{$\bigcirc$}} \put(17.25,2){\makebox(0,0)[c]{$\bigcirc$}}
\put(19.4,2){\makebox(0,0)[c]{$\bigcirc$}} \put(5.6,2){\makebox(0,0)[c]{$\bigcirc$}}
\put(8.4,2){\line(1,0){1.55}} \put(10.82,2){\line(1,0){0.8}}
\put(13.2,2){\line(1,0){1.2}} \put(15.28,2){\line(1,0){1.45}}
\put(17.7,2){\line(1,0){1.25}} \put(6,1.8){$\Longleftarrow$}
\put(12.5,1.95){\makebox(0,0)[c]{$\cdots$}}
\put(-.5,2){\makebox(0,0)[c]{{\ovalBox(1.6,1.2){$\mf{b}$}}}}
\put(5.3,1){\makebox(0,0)[c]{\tiny$-\delta_1$}}
\put(8.4,1){\makebox(0,0)[c]{\tiny$\delta_2-\delta_{3}$}}
\put(19.6,1){\makebox(0,0)[c]{\tiny$\delta_{m-1}-\delta_{m}$}}
\end{picture}
\end{center}

\begin{center}
\hskip -1cm \setlength{\unitlength}{0.16in}
\begin{picture}(24,2)
\put(5.7,2){\makebox(0,0)[c]{$\bigcirc$}} \put(8,2){\makebox(0,0)[c]{$\bigcirc$}}
\put(10.4,2){\makebox(0,0)[c]{$\bigcirc$}} \put(14.85,2){\makebox(0,0)[c]{$\bigcirc$}}
\put(17.25,2){\makebox(0,0)[c]{$\bigcirc$}} \put(19.4,2){\makebox(0,0)[c]{$\bigcirc$}}
\put(6.8,2){\makebox(0,0)[c]{$\Longrightarrow$}} \put(8.4,2){\line(1,0){1.55}}
\put(10.82,2){\line(1,0){0.8}} \put(13.2,2){\line(1,0){1.2}}
\put(15.28,2){\line(1,0){1.45}} \put(17.7,2){\line(1,0){1.25}}
\put(12.5,1.95){\makebox(0,0)[c]{$\cdots$}}
\put(-.5,2){\makebox(0,0)[c]{{\ovalBox(1.6,1.2){$\mf{c}$}}}}
\put(5.3,1){\makebox(0,0)[c]{\tiny$-2\delta_1$}}
\put(8.4,1){\makebox(0,0)[c]{\tiny$\delta_1-\delta_{2}$}}
\put(19.6,1){\makebox(0,0)[c]{\tiny$\delta_{m-1}-\delta_{m}$}}
\end{picture}
\end{center}
\begin{center}
\hskip -1cm \setlength{\unitlength}{0.16in}
\begin{picture}(24,3.5)
\put(8,2){\makebox(0,0)[c]{$\bigcirc$}} \put(10.4,2){\makebox(0,0)[c]{$\bigcirc$}}
\put(14.85,2){\makebox(0,0)[c]{$\bigcirc$}} \put(17.25,2){\makebox(0,0)[c]{$\bigcirc$}}
\put(19.4,2){\makebox(0,0)[c]{$\bigcirc$}} \put(6,3.8){\makebox(0,0)[c]{$\bigcirc$}}
\put(6,.3){\makebox(0,0)[c]{$\bigcirc$}} \put(8.4,2){\line(1,0){1.55}}
\put(10.82,2){\line(1,0){0.8}} \put(13.2,2){\line(1,0){1.2}}
\put(15.28,2){\line(1,0){1.45}} \put(17.7,2){\line(1,0){1.25}}
\put(7.6,2.2){\line(-1,1){1.3}} \put(7.6,1.8){\line(-1,-1){1.3}}
\put(12.5,1.95){\makebox(0,0)[c]{$\cdots$}}
\put(-.5,2){\makebox(0,0)[c]{{\ovalBox(1.6,1.2){$\mf{d}$}}}}
\put(3.3,0.3){\makebox(0,0)[c]{\tiny$-\delta_1-\delta_{2}$}}
\put(3.6,3.7){\makebox(0,0)[c]{\tiny$\delta_1-\delta_{2}$}}
\put(9.4,1){\makebox(0,0)[c]{\tiny$\delta_{2}-\delta_{3}$}}
\put(19.6,1){\makebox(0,0)[c]{\tiny$\delta_{m-1}-\delta_{m}$}}
\end{picture}
\end{center}

A head diagram
\makebox(23,0){$\oval(20,12)$}\makebox(-20,8){$\mf{x}$} is
connected with one of the three tail diagrams to produce three
``master" Dynkin diagrams as below, whose corresponding Lie
(super)algebras will be denoted by $\mathcal G =\mathcal G^{\mf
x}$, $\ov {\mathcal G} =\ov {\mathcal G}^{\mf x}$, and $\wt
{\mathcal G} =\wt {\mathcal G}^{\mf x}$, respectively.

\begin{center}
\hskip -1cm \setlength{\unitlength}{0.16in}
\begin{picture}(24,3)
\put(8,2){\makebox(0,0)[c]{$\bigcirc$}}
\put(10.4,2){\makebox(0,0)[c]{$\bigcirc$}}
\put(14.85,2){\makebox(0,0)[c]{$\bigcirc$}}
\put(17.25,2){\makebox(0,0)[c]{$\bigcirc$}}
\put(19.4,2){\makebox(0,0)[c]{$\bigcirc$}}
\put(5.2,2){\makebox(0,0)[c]{{\ovalBox(1.6,1.2){$\mf{x}$}}}}
\put(8.4,2){\line(1,0){1.55}} \put(10.82,2){\line(1,0){0.8}}
\put(13.2,2){\line(1,0){1.2}} \put(15.28,2){\line(1,0){1.45}}
\put(17.7,2){\line(1,0){1.25}} \put(19.8,2){\line(1,0){1.25}}
\put(6,2){\line(1,0){1.4}}
\put(12.5,1.95){\makebox(0,0)[c]{$\cdots$}}
\put(22,1.95){\makebox(0,0)[c]{$\cdots$}}
\put(-.5,2){\makebox(0,0)[c]{${\mathcal G}$:}}
\put(7.8,1){\makebox(0,0)[c]{\tiny$\delta_m-\epsilon_1$}}
\put(10.5,1){\makebox(0,0)[c]{\tiny$\epsilon_1-\epsilon_2$}}
\put(19.7,1){\makebox(0,0)[c]{\tiny$\epsilon_n-\epsilon_{n+1}$}}
\end{picture}
\end{center}
\begin{center}
\hskip -1cm \setlength{\unitlength}{0.16in}
\begin{picture}(24,2)
\put(8,2){\makebox(0,0)[c]{$\bigotimes$}}
\put(10.4,2){\makebox(0,0)[c]{$\bigcirc$}}
\put(14.85,2){\makebox(0,0)[c]{$\bigcirc$}}
\put(17.25,2){\makebox(0,0)[c]{$\bigcirc$}}
\put(19.4,2){\makebox(0,0)[c]{$\bigcirc$}}
\put(5.2,2){\makebox(0,0)[c]{{\ovalBox(1.6,1.2){$\mf{x}$}}}}
\put(8.4,2){\line(1,0){1.55}} \put(10.82,2){\line(1,0){0.8}}
\put(13.2,2){\line(1,0){1.2}} \put(15.28,2){\line(1,0){1.45}}
\put(17.7,2){\line(1,0){1.25}} \put(19.8,2){\line(1,0){1.25}}
\put(6,2){\line(1,0){1.4}}
\put(12.5,1.95){\makebox(0,0)[c]{$\cdots$}}
\put(22,1.95){\makebox(0,0)[c]{$\cdots$}}
\put(-.5,2){\makebox(0,0)[c]{$\ov{{\mathcal G}}$:}}
\put(7.8,1){\makebox(0,0)[c]{\tiny$\delta_m-\epsilon_{\hf}$}}
\put(10.5,1){\makebox(0,0)[c]{\tiny$\epsilon_{\hf}-\epsilon_{\frac{3}{2}}$}}
\put(19.7,1){\makebox(0,0)[c]{\tiny$\epsilon_{n-\hf}-\epsilon_{n+\hf}$}}
\end{picture}
\end{center}
\begin{center}
\hskip -1cm \setlength{\unitlength}{0.16in}
\begin{picture}(24,2)
\put(8,2){\makebox(0,0)[c]{$\bigotimes$}} \put(10.4,2){\makebox(0,0)[c]{$\bigotimes$}}
\put(14.85,2){\makebox(0,0)[c]{$\bigotimes$}}
\put(17.25,2){\makebox(0,0)[c]{$\bigotimes$}}
\put(19.4,2){\makebox(0,0)[c]{$\bigotimes$}}
\put(5.2,2){\makebox(0,0)[c]{{\ovalBox(1.6,1.2){$\mf{x}$}}}}
\put(8.4,2){\line(1,0){1.55}} \put(10.82,2){\line(1,0){0.8}}
\put(13.2,2){\line(1,0){1.2}} \put(15.28,2){\line(1,0){1.45}}
\put(17.7,2){\line(1,0){1.25}} \put(19.8,2){\line(1,0){1.25}}
\put(6,2){\line(1,0){1.4}}
\put(12.5,1.95){\makebox(0,0)[c]{$\cdots$}}
\put(22,1.95){\makebox(0,0)[c]{$\cdots$}}
\put(-.5,2){\makebox(0,0)[c]{$\wt{{\mathcal G}}$:}}
\put(7.8,1){\makebox(0,0)[c]{\tiny$\delta_m-\epsilon_\hf$}}
\put(10.5,1){\makebox(0,0)[c]{\tiny$\epsilon_\hf-\epsilon_1$}}
\put(19.7,1){\makebox(0,0)[c]{\tiny$\epsilon_n-\epsilon_{n+\hf}$}}
\end{picture}
\end{center}For example, the resulting
master Dynkin diagrams for ${\mathcal G}$ are the four
infinite-rank classical Lie algebras $\aaa$ (one-sided infinity),
$\bbb, \ccc, \ddd$.

%
%
\subsection{The categories $\mc O, \ov{\mc O}, \wt{\mc O}$}\label{sec:allO}

Let $\Pi^{\mf x}$ denote the standard set of simple roots associated to the diagram
\makebox(23,0){$\oval(20,12)$}\makebox(-20,8){$\mf{x}$}, and fix $Y^0 \subset \Pi^{\mf
x}$. Let $\mf T^1$, $\ov{\mf{T}}^1$ and $\wt{\mf{T}}^1$ be the subsets of the three tail
diagrams
with the first simple root removed. Let
$$
Y = Y^0 \cup \mf T^1,
 \qquad \ov Y =Y^0 \cup \ov{\mf T}^1,
 \qquad \wt Y =Y^0 \cup \wt{\mf T}^1.
$$
Let $P$ (respectively, $\ov P$, $\wt P$) be the lattice which consists of the integral
span of $\ep_i$, where $i$ runs over $I(m|\N)$ (respectively, $I(m|\hf +\N)$, $I(m|\hf
\N)$). Here $I(m|\N) =\{\bar{1}, \ldots, \bar{m}\} \cup \N$, and the other two sets are
similarly defined.

 Define $P_Y = \cup_{d \in \Z} P_Y^d$, where
\begin{align}
P_Y^d  =\Big\{ \la = & \sum_{i \in I(m|\N)} \la_i \ep_i \in P \mid
\la\ \text{ is } Y
\text{-dominant};
\text{ } \la_i =d  \text{ for } i\gg 0 \Big\}.
\end{align}
We define ${P}_{\ov Y}^d$ by changing $I(m|\N)$ in the definition of $P_Y^d$ to
$I(m|\hf+\N)$ and changing $ \la_i =d$ to $ \la_i =-d$ for $i\gg 0$, and let $P_{\ov Y} = \cup_{d \in
\Z} {P}_{\ov Y}^d$.

Then, for $\la \in P_Y^d$,  $\la^+ :=(\la_1 -d, \la_2 -d, \ldots)$ is a partition. Also,
for $\la \in P_{\ov Y}^d$,  $\la^+ :=(\la_{1/2} +d, \la_{3/2} +d, \ldots)$ is a
partition. We have a bijection of sets:
$$
\natural: P^d_Y \longrightarrow P^d_{\ov Y},    \qquad \la \mapsto
\la^\natural,
$$
which is determined by the conditions
$$
\la_i= \la_i^\natural, \quad \forall i \in I(m|0); \qquad
(\la^\natural)^+ = (\la^+)'.
$$
This further induces a bijection $\natural: P_Y \rightarrow P_{\ov Y}$.

We define another map
$$
\theta: P_Y^d \rightarrow P_{\wt Y}^d, \quad
 \la \mapsto \la^\theta =\sum_{i \in I (m|0)} \la_i\ep_i +\sum_{i
\in \N} \big( (p_i -d) \ep_{i-\hf} +(q_i+d) \ep_i \big),
$$
where the image $\la^\theta$ is determined by the conditions
$\la_i= \la_i^\theta$ for all $i \in I(m|0)$ and $(p_i|q_i)_{i \ge
1}$ is the modified Frobenius coordinates of the partition
$(\la^+)'$ (as defined in Example~\ref{ex:3hw}~(3)). The map
$\theta$ is clearly injective, and so we obtain a bijection
$\theta: P_Y^d \rightarrow P_{\wt Y}^d :=\theta (P_Y^d)$ for each
$d \in \Z$. Letting $P_{\wt Y} =\cup_{d \in \Z} P_{\wt Y}^d$ we
also obtain a bijection $\theta: P_Y \rightarrow P_{\wt Y}$.

Following the construction of a parabolic category $\mc O$ in Section~\ref{sec:O}, we
obtain the categories $\mc O :=\mc O({\mathcal G}, Y, P_Y), \ov{\mc O} :={\mc O}(\ov
{\mathcal G}, \ov Y, P_{\ov Y})$, and $\wt{\mc O} :={\mc O}(\wt {\mathcal G}, \wt Y,
P_{\wt Y})$. We observe the following direct sum decompositions of categories:
$$
 \mc O = \bigoplus_{d \in \Z} {\mc O}^d, \quad
 \ov{\mc O} = \bigoplus_{d \in \Z} \ov{\mc O}^d, \quad
 \wt{\mc O} = \bigoplus_{d \in \Z} \wt{\mc O}^d,
$$
where the index $d$ indicates additional weight constraints: $(\la, \ep_i) =d$ for $i \in
\Z, i\gg 0$ for $\la \in P_Y$; $(\la, \ep_i) =-d$ for $i \in \hf +\Z, i\gg 0$ for $\la
\in P_{\ov Y}$; $(\la, \ep_i) =(-1)^{2i}d$ for $i \in \hf \Z, i\gg 0$ for $\la \in P_{\wt
Y}$, respectively.

\begin{remark}
A more conceptual formulation (see \cite{CLW} for details) is to
introduce central extensions of ${\mathcal G}, \ov {\mathcal G},
\wt {\mathcal G}$ respectively by a one-dimensional center, whose
module categories at level $d$ are equivalent to ${\mc O}^d,
\ov{\mc O}^d,  \wt{\mc O}^d$ above, respectively. In this way, the
additional weight constraints above are transferred to be $(\la,
\ep_i) =0$ for $i\gg 0$, which is independent of $d$.
\end{remark}

\subsection{The equivalence of categories}\label{Tfunctors}

We start with two simple observations:

(i) The Lie (super)algebras ${\mathcal G}$ and $\ov {\mathcal G}$
are naturally Lie subalgebras (though not Levi subalgebras) of
$\wt {\mathcal G}$, and the standard triangular decompositions of
the three Lie (super)algebras are compatible with the inclusions
${\mathcal G} \subset \wt {\mathcal G}$ and $\ov {\mathcal G}
\subset \wt {\mathcal G}$. This follows by examining the simple
roots of the three algebras.

(ii) We may naturally regard the lattices $P$ and $\ov P$ as sublattices of $\wt P$, by
definition of the lattices $P, \ov P, \wt P$.

Given a $\wt {\mathcal G}$-module with weight space decomposition
$\wt{M}=\bigoplus_{\mu\in\wt{P}}\wt{M}_\mu$, we define
\begin{align*}
T(\wt{M}):= \bigoplus_{\mu\in{P}}\wt{M}_\mu,\qquad
\hbox{and}
 \qquad
\ov{T}(\wt{M}):= \bigoplus_{\mu\in{\ov{P}}}\wt{M}_\mu.
\end{align*}
Clearly, $T(\wt{M})$ is a ${\mathcal G}$-module and
$\ov{T}(\wt{M})$ is a $\ov {\mathcal G}$-module. We can check that
$T$ and $\ov T$ send objects in category $\wt{\mc O}$ to objects
in category $\mc O$ and $\ov{\mc O}$, respectively, and hence we
obtain functors
\begin{eqnarray*}
\CD
 \wt{\mc{O}} @>\wt{T}>>
{\mc{O}} \\
 @VV{\ov{T}}V   \\
 \ov{\mc{O}}
 \endCD
\end{eqnarray*}
More precisely, we have  $T:\wt{\mc{O}}^d \rightarrow{\mc{O}}^d$ and
$\ov{T}:\wt{\mc{O}}^d \rightarrow\ov{\mc{O}}^d$.

We will use $\ov{\Delta}(\la)$ and $\wt{\Delta}(\la)$ to denote the
parabolic Verma modules in the categories $\ov{\mc O}$ and $\widetilde{\mc O}$ of highest
weights $\la$, respectively. Similarly, we use
$\ov{L}(\la)$ and $\wt{L}(\la)$ to denote the corresponding irreducible
modules. We are ready to formulate the main result of \cite{CLW}.

\begin{theorem} \cite{CLW} \label{th:SD}
We have
\begin{align*}
 T\big{(}\wt{\Delta}(\la^\theta)\big{)} =\Delta(\la), &\qquad
\ov{T}\big{(}\wt{\Delta}(\la^\theta)\big{)}=\ov{\Delta}(\la^\natural),   \\
 T\big{(}\wt{L}(\la^\theta)\big{)} =L(\la), &\qquad
\ov{T}\big{(}\wt{L}(\la^\theta)\big{)}=\ov{L}(\la^\natural).
\end{align*}
Moreover, $T:\wt{\mc{O}}\rightarrow{\mc{O}}$ and
$\ov{T}:\wt{\mc{O}}\rightarrow\ov{\mc{O}}$ are equivalences of categories.
\end{theorem}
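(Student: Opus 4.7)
The plan is to exploit the fact that both $T$ and $\ov{T}$ are truncation functors defined by taking direct sums of weight spaces, with respect to sublattices $P \subset \wt{P}$ and $\ov{P} \subset \wt{P}$ respectively. Such a functor is automatically exact, since a short exact sequence of weight modules breaks up into short exact sequences on each weight space. This immediately reduces the theorem to three tasks: identifying the image of each parabolic Verma, identifying the image of each irreducible, and proving the two functors are fully faithful and essentially surjective.

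For the parabolic Vermas, I would use the PBW isomorphism $\wt{\Delta}(\la^\theta) \cong U(\wt{\mf u}^-) \otimes L(\mf l_{\wt Y}, \la^\theta)$ as a vector space, and note that $\wt{\mf u}^-$ has a compatible decomposition into its intersections with ${\mathcal G}$ and $\ov{\mathcal G}$ plus a ``purely odd'' part. Truncation to $P$-weights (respectively $\ov P$-weights) kills the contribution of these odd generators on the PBW side, so the problem reduces to showing $T\big(L(\mf l_{\wt Y}, \la^\theta)\big) \cong L(\mf l_Y, \la)$ and $\ov{T}\big(L(\mf l_{\wt Y}, \la^\theta)\big) \cong L(\mf l_{\ov Y}, \la^\natural)$ as Levi-modules. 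Since $\mf l_{\wt Y}$ decomposes as a product of a head-diagram piece (acting on the $\delta$-coordinates, untouched by truncation) and an infinite general linear superalgebra (acting on the $\ep$-coordinates), this reduces to a hook Schur function character identity; the definitions of the bijections $\natural$ and $\theta$ via (modified Frobenius coordinates and) conjugate partitions are tailored precisely so that the characters match.

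For the irreducibles, the highest weight vector $v_{\la^\theta}$ of $\wt L(\la^\theta)$ lies in both $T(\wt L(\la^\theta))$ and $\ov{T}(\wt L(\la^\theta))$, where it remains a highest weight vector of weight $\la$ for $\mathcal G$ and of weight $\la^\natural$ for $\ov{\mathcal G}$. This gives surjections onto $L(\la)$ and $\ov L(\la^\natural)$, and the issue is to show the kernels are zero, i.e., that truncation kills no composition factor. Here the key tool is odd reflections (Lemma~\ref{odd ref} and Lemma~\ref{hwt odd}): one changes the Borel of $\wt{\mathcal G}$ through a sequence of odd reflections along the roots $\delta_m -\ep_{1/2}, \ep_{1/2} -\ep_1, \ldots$ and tracks how the highest weight of $\wt L$ transforms. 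The compatibility of the resulting Borels with those of ${\mathcal G}$ and $\ov{\mathcal G}$ lets one recognize the truncated highest weight modules as already being irreducible. Combined with the previous step, we obtain that $T$ and $\ov{T}$ send standard Vermas to standard Vermas and irreducibles to irreducibles, which on Grothendieck groups gives a bijective match of Kazhdan--Lusztig--type multiplicities $[\wt{\Delta}(\la^\theta):\wt L(\mu^\theta)] = [\Delta(\la):L(\mu)] = [\ov{\Delta}(\la^\natural):\ov L(\mu^\natural)]$.

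The hard part, and the technical heart of the argument, is upgrading these identifications to equivalences of categories. The plan is standard for highest weight categories: exactness plus preservation of parabolic Vermas and of simples implies that $T$ and $\ov T$ preserve $\mathrm{Ext}^1$ between irreducibles (via BGG reciprocity applied to projective covers, whose existence in these categories will need to be verified at the level of ``truncated'' subcategories with bounded weight support), and this in turn implies full faithfulness by induction on composition length. Essential surjectivity follows because every object in $\mc O$ (resp.\ $\ov{\mc O}$) admits a two-step parabolic Verma presentation, and we can lift this presentation to $\wt{\mc O}$ using $\theta^{-1}$. The main obstacle I anticipate is precisely the control of $\mathrm{Ext}^1$ and of projective covers in these non-artinian categories of infinite-rank Kac-Moody (super)algebras; the standard remedy is to filter each category by full subcategories $\mc O^{\le\la}$ of highest weight $\le \la$ (in a suitable partial order), which are genuine highest weight categories with projectives, and then pass to the direct limit. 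Once one checks that $T$ and $\ov T$ respect this filtration and restrict to equivalences at each level, the global equivalences follow.
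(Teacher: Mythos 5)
Your proposal follows essentially the same route as the paper's own sketch: exactness of the weight-truncation functors, reduction to the Levi level where the identification $T(L(\mf l_{\wt Y},\la^\theta))=L(\mf l_Y,\la)$ is checked via odd reflections and the hook Schur character identity, a weight/character argument for the parabolic Vermas, absence of extra singular vectors for the irreducibles, and then the (genuinely technical) upgrade to an equivalence via truncated subcategories with enough projectives. The paper itself only records this as an ``idea of a proof,'' and your outline is consistent with it, including the extra detail you supply on the equivalence step, which the paper leaves entirely to \cite{CLW}.
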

As a consequence, $T:\wt{\mc{O}}^d\rightarrow{\mc{O}}^d$ and
$\ov{T}:\wt{\mc{O}}^d\rightarrow\ov{\mc{O}}^d$ are equivalences of categories.

\begin{proof}[Idea of a proof]
The proof consists of several major steps:

(i) $T(L(\mf l_{\wt Y}, \la^\theta)) =L(\mf l_{Y}, \la)$. This follows by locating a
highest weight $\la$ for the module $L(\mf l_{\wt Y}, \la^\theta)$ with respect to a new
Borel of $\mf l_{\wt{Y}}$ which is compatible with the imbedding $\mf l_{Y} \subseteq \mf
l_{\wt{Y}}$. Then we check the equality on the character level (which boils down to the
hook-Schur functions.)

(ii) $T(\wt{\Delta}(\la^\theta))$ is a highest weight module of
highest weight $\la$ with respect to the standard Borel of
${\mathcal G}$. This follows from a weight argument.

(iii) $T\big{(}\wt{\Delta}(\la^\theta)\big{)} =\Delta(\la)$. Check on the character
level, and use (i) and (ii).

(iv) $T\big{(}\wt{L}(\la^\theta)\big{)} =L(\la)$. Follows by showing that there is no
singular vector in $T\big{(}\wt{L}(\la^\theta)\big{)}$ of weight different
from $\la$.

This proves the  equality for $T$ in the theorem. The proof of the equality for $\ov{T}$
is similar. More work is needed to establish the equivalence of categories.
\end{proof}

\begin{corollary}  [Super Duality]
The categories $\mc O$ and $\ov{\mc O}$ are equivalent.
\end{corollary}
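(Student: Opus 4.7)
The plan is to derive the corollary as a formal consequence of Theorem~\ref{th:SD}. That theorem already provides two functors $T:\wt{\mc O}\to\mc O$ and $\ov T:\wt{\mc O}\to\ov{\mc O}$ and asserts that \emph{each is an equivalence of categories}. Since being an equivalence is stable under quasi-inverses and under composition, I would pick a quasi-inverse $\ov T^{-1}:\ov{\mc O}\to\wt{\mc O}$ (which exists by the equivalence) and form the composite functor
\begin{equation*}
 \mc S := T\circ \ov T^{-1}:\ov{\mc O}\longrightarrow \mc O,
\end{equation*}
together with its evident quasi-inverse $\ov T\circ T^{-1}$. Being a composite of two equivalences, $\mc S$ is itself an equivalence, which is the content of the corollary.

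Next I would record the explicit description of $\mc S$ on the distinguished modules, using the object-level identities in Theorem~\ref{th:SD}. For $\la\in P_Y$ let $\mu=\la^\natural\in P_{\ov Y}$; since $\ov T(\wt\Delta(\la^\theta))=\ov\Delta(\mu)$ and $T(\wt\Delta(\la^\theta))=\Delta(\la)$, we obtain $\mc S(\ov\Delta(\mu))=\Delta(\la)$ and similarly $\mc S(\ov L(\mu))=L(\la)$. Thus, under the equivalence, the parabolic Verma modules and the irreducible modules of $\ov{\mc O}$ correspond bijectively to those of $\mc O$ via the combinatorial bijection $\natural:P_Y\to P_{\ov Y}$ (or its inverse). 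This compatibility is automatic once the two functors in the theorem have been established, because the bijection $\theta:P_Y\to P_{\wt Y}$ and the bijection $\natural:P_Y\to P_{\ov Y}$ are related by the $\theta$--$\natural$ parametrization of highest weights built into the proof of Theorem~\ref{th:SD}.

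Finally, I would note that the decomposition $\mc O=\bigoplus_{d\in\Z}\mc O^d$ and $\ov{\mc O}=\bigoplus_{d\in\Z}\ov{\mc O}^d$ is preserved level by level, since the same is true of $T$ and $\ov T$ by the refinement $T:\wt{\mc O}^d\to\mc O^d$, $\ov T:\wt{\mc O}^d\to\ov{\mc O}^d$ appearing after the theorem. Hence $\mc S$ restricts to an equivalence $\mc S:\ov{\mc O}^d\xrightarrow{\sim}\mc O^d$ for each $d\in\Z$.

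There is, strictly speaking, no genuine obstacle at this stage: once Theorem~\ref{th:SD} is granted, the corollary is purely formal. The substantive content — construction of the truncation functors $T,\ov T$, identification of their images on parabolic Vermas and simples, and verification of essential surjectivity and full faithfulness needed for ``equivalence of categories'' — has already been absorbed into that theorem, whose proof the authors only sketch. Thus the only thing worth emphasizing in the write-up is the explicit formula $\mc S(\ov L(\la^\natural))=L(\la)$, which is what makes the equivalence useful for transferring Kazhdan--Lusztig data between the classical Lie algebra side and the Lie superalgebra side.
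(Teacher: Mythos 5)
Your proof is correct and matches the paper's intent exactly: the corollary is stated without proof precisely because it follows formally from Theorem~\ref{th:SD} by composing $T$ with a quasi-inverse of $\ov T$, as you do. Your additional remarks on the level-by-level refinement $\ov{\mc O}^d\simeq\mc O^d$ and the explicit formula $\mc S(\ov L(\la^\natural))=L(\la)$ are accurate and consistent with the discussion surrounding the theorem.
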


\begin{remark} [History of Super Duality]

First for type $A$.
\begin{enumerate}
\item[(1)] For $Y^0 =\Pi^\mf{a}$, the super duality was conjectured in
    \cite[Conjecture 6.10]{CWZ}. This was motivated by Brundan \cite{Br} on
    finite-dimensional simple $\glmn$-characters.

\item[(2)] For arbitrary Levi subalgebra associated to $Y^0 \subseteq \Pi^{\mf a}$,
    the super duality conjecture was formulated in \cite[Conjecture 4.18]{CW4}. It is
    shown in \cite{CWZ, CW4} that the Brundan-Kazhdan-Lusztig polynomials coincide
    with the usual Kazhdan-Lusztig polynomials in type $A$.

\item[(3)] The conjecture of \cite{CWZ} is proved in \cite{BrS}, where the underlying
    algebras for the two categories were computed and shown to be isomorphic.

\item[(4)] The more general conjecture of \cite{CW4} is independently proved in
    \cite{CL} essentially in the form of Theorem~\ref{th:SD}. In particular, this
    provides a new proof of the main result of \cite{Br}.

\end{enumerate}

Then for type $\osp$.
\begin{enumerate}
\item[(5)] A first supporting evidence for the super duality was provided by the
    computation in \cite {CKW} of the Kostant $\mf u$-homology groups with
    coefficients in the modules of classical Lie superalgebras appearing in Howe
   duality decompositions.

\item[(6)] Theorem~\ref{th:SD} is formulated and established in \cite{CLW}.
\end{enumerate}

Note that little has been known about representation theory of
infinite-dimensional Lie superalgebras, such as affine
superalgebras, with the exception of work of Kac and Wakimoto
\cite{KW}.
\begin{enumerate}
\item[(7)] The super duality approach applies to and sheds new insight on more
    general Lie superalgebras, including affine superalgebras.
\end{enumerate}
\end{remark}

\subsection{Irreducible $\ov{{\mathcal
G}}$-characters}\label{char}

By Remark~\ref{KL}, the irreducible character problem for the category $\mc O$ is solved
by the Kazhdan-Lusztig conjectures \cite{KL} (theorem of Beilinson-Bernstein \cite{BB}
and Brylinski-Kashiwara \cite{BK}). Hence by Theorem~\ref{th:SD}, the irreducible
character problem for the module category $\ov{\mc O}$ for Lie superalgebras is solved by
the same classical Kazhdan-Lusztig polynomials.

Recall for $\mf x \in \{\mf{a,b,c,d}\}$, we have defined a Lie superalgebra $\ov{\mathcal
G} =\ov{\mathcal G}^{\mf x}$ whose Dynkin diagram is obtained by connecting
\makebox(23,0){$\oval(20,12)$}\makebox(-20,8){$\mf{x}$} with the type $\gl(1|\infty)$
tail diagram $\ov{\mf T}$. For  $n \in \N$, we will also consider a Lie superalgebra
$\ov{\mathcal G}_n =\ov{\mathcal G}^{\mf x}_n$ whose Dynkin diagram (see below) is
obtained by connecting \makebox(23,0){$\oval(20,12)$}\makebox(-20,8){$\mf{x}$} with a
tail diagram $\ov{\mf{T}}_n$ of type $\gl(1|n)$:
\begin{equation}\label{finite diagram}
\hskip -6cm \setlength{\unitlength}{0.16in}
\begin{picture}(24,2)
\put(12.35,1){\makebox(0,0)[c]{{\ovalBox(1.6,1.2){$\mf{x}$}}}}
\put(15.25,1){\makebox(0,0)[c]{$\bigotimes$}}
\put(17.4,1){\makebox(0,0)[c]{$\bigcirc$}}
\put(21.9,1){\makebox(0,0)[c]{$\bigcirc$}}
\put(13.28,1){\line(1,0){1.45}} \put(15.7,1){\line(1,0){1.25}}
\put(17.8,1){\line(1,0){0.9}} \put(20.1,1){\line(1,0){1.4}}
\put(19.6,0.95){\makebox(0,0)[c]{$\cdots$}}
\put(15,0){\makebox(0,0)[c]{\tiny $\delta_m -\ep_{\hf}$}}
\put(18,0){\makebox(0,0)[c]{\tiny $\ep_\hf -\ep_{\frac32}$}}
\put(23,0){\makebox(0,0)[c]{\tiny $\ep_{n-\frac32} -\ep_{n-\hf}$}}
\put(7.5,1){\makebox(0,0)[c]{{\ovalBox(1.6,1.2){$\ov{\mathcal
G}^\mf{x}_n$}}}} \put(8.5,0.7){:}
\end{picture}
\end{equation}
As \makebox(23,0){$\oval(20,12)$}\makebox(-20,8){$\mf{x}$} is
taken to be one of the four classical Dynkin diagrams, $\ov
{\mathcal G}_n$ is a classical finite-dimensional Lie superalgebra
of type either $\gl$ or $\osp$.

Let $\ov{\mf{T}}_n^1$ be the subset of  $\ov{\mf{T}}_n$, which is obtained from
$\ov{\mf{T}}_n$ by the removal of the first simple root. Let $\ov Y_n =Y^0 \cup
\ov{\mf{T}}_n^1$, and let
\begin{align*}
P_{\ov Y_n}^d =\Big\{ \la = & \sum_{i \in I(m|n-\hf)} \la_i \ep_i
\mid \la \text{ is } \ov{Y}_n\text{-dominant; } \la_{n-\hf}
\geq -d  \Big\}.
\end{align*}
For $d \in \Z$, we introduce a category $\ov{\mc O}_n^d :=\mc O
(\ov {\mathcal G}_n, \ov Y_n, P_{\ov Y_n}^d)$ of $\ov {\mathcal
G}_n$-modules. Note that $\ov{\mc O}_n^d$ is a full subcategory of
the (more standard) category $\ov{\mc O}_n :=\mc O (\ov {\mathcal
G}_n, \ov Y_n, P_{\ov Y_n})$, where
\begin{align*}
P_{\ov Y_n} =\Big\{ \la = & \sum_{i \in I(m|n-\hf)} \la_i \ep_i
\mid \la \text{ is } \ov{Y}_n\text{-dominant} \Big\}.
\end{align*}
Moreover, $\ov{\mc O}_n^d \subset \ov{\mc O}_n^{d+1}$ for all $d$, and $\ov{\mc O}_n
=\cup_{d \in \Z} \ov{\mc O}_n^d.$

We also introduce truncation functors $\ov \Tr_{n}: \ov{\mc O}^d \rightarrow \ov{\mc
O}_n^d$ as follows: Let $M \in \ov{\mc O}^d$ such that $M =\bigoplus_\mu M_\mu$. We define
$$
\ov \Tr_{n}(M) =\bigoplus_{\{\mu \mid \mu_{n+\hf} =-d\} } M_\mu,
$$
which is clearly a $\ov {\mathcal G}_n$-module. Note that
$\mu_{n+\hf} =-d$ ensures $\mu_{k+\hf} =-d$ for all $k \geq n$.
Such a truncation functor to a Levi subalgebra (except that we
drop an abelian direct summand of the Levi) has its analogue in
algebraic group setting studied by Donkin (cf.~e.g.~\cite{CW4} for
a formulation in type $A$). The functors $\ov \Tr_{n}:\ov{\mc
O}^d\rightarrow\ov{\mc O}^d_n$ have the following key property.

\begin{proposition}\label{finite trunc}
For $X=\Delta$ or $L$, we have
$$
\ov \Tr_{n} (X (\la)) = \left \{
 \begin{array}{ll}
X_n (\la^{(n)}), & \text{ if } \la_{n+\hf} =-d, \\
0, & \text{ otherwise,}
\end{array}
\right.
$$
where $\la^{(n)}$ is obtained from $\la$ by ignoring $\la_{j}$, for $j>n-\hf$.
\end{proposition}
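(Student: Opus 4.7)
The plan is to handle the parabolic Verma case by a direct PBW and weight calculation, and then deduce the irreducible case from exactness of $\ov\Tr_{n}$ together with the characterization of $\ov L_n(\la^{(n)})$ as the unique simple quotient of $\ov\Delta_n(\la^{(n)})$. First I would note that $\ov\Tr_n$ is exact, being projection onto a specified collection of weight spaces, and that the Chevalley generators of $\ov{\mathcal G}_n$ involve only $\ep$-indices $\leq n-\hf$, so they preserve the truncation condition $\mu_{k+\hf}=-d$ for all $k\geq n$; hence $\ov\Tr_n(M)$ is naturally a $\ov{\mathcal G}_n$-module.

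For the vanishing case, suppose $\la_{n+\hf}\neq -d$. Since $\la$ is $\ov Y$-dominant with $\la_i=-d$ for $i\gg 0$, this forces $\la_{n+\hf}>-d$. The plan is to track the statistic $\tau(\mu):=\sum_{k\geq n}(\mu_{k+\hf}+d)$ on weights of $\ov\Delta(\la)$. Every positive root of $\mf u^+$ has the form $\delta_i-\ep_j$ (or a head root not acting on the tail), whose action on the highest weight cannot decrease $\tau$; and the gl-chain part of the Levi moves ``polynomial mass'' among the $\ep$-coordinates in a mass-preserving manner bounded by $\la^+$. A direct bookkeeping yields $\tau(\mu)\geq \tau(\la)\geq \la_{n+\hf}+d>0$ for every weight $\mu$ of $\ov\Delta(\la)$. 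Since the truncation condition $\mu_{n+\hf}=-d$ forces $\tau(\mu)=0$ (by the property recorded in Section~\ref{sec:allO}), we conclude $\ov\Tr_n(\ov\Delta(\la))=0$, and therefore $\ov\Tr_n(\ov L(\la))=0$ as $\ov L(\la)$ is a quotient of $\ov\Delta(\la)$.

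For the identification, assume $\la_{n+\hf}=-d$, so $\la_{k+\hf}=-d$ for all $k\geq n$. In the Verma case, the highest-weight vector $v_\la$ lies in $\ov\Tr_n(\ov\Delta(\la))$. Writing $\ov\Delta(\la)\cong U(\mf u^-)\otimes L(\mf l_{\ov Y},\la)$ by PBW, I would identify the $\ov{\mathcal G}_n$-submodule generated by $v_\la$ with $U(\mf u^-_n)\otimes L(\mf l_{\ov Y_n},\la^{(n)})\cong \ov\Delta_n(\la^{(n)})$, where $\mf u^-_n$ is the analogous nilradical for $\ov{\mathcal G}_n$ and $L(\mf l_{\ov Y_n},\la^{(n)})$ is recovered from $L(\mf l_{\ov Y},\la)$ by the same truncation (this reduction works precisely because $(\la^+)_{n+\hf}=0$). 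The reverse inclusion comes from expanding an arbitrary weight vector in the PBW basis: the truncation condition on the resulting weight forces every negative root vector appearing to lie in $\mf u^-_n$ and the Levi tensor factor to lie in $L(\mf l_{\ov Y_n},\la^{(n)})$. This gives $\ov\Tr_n(\ov\Delta(\la))=\ov\Delta_n(\la^{(n)})$.

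For the irreducible module, exactness of $\ov\Tr_n$ produces a nonzero quotient map $\ov\Delta_n(\la^{(n)})=\ov\Tr_n(\ov\Delta(\la))\twoheadrightarrow \ov\Tr_n(\ov L(\la))$ sending $v_\la$ to $v_\la$, so $\ov\Tr_n(\ov L(\la))$ is a highest-weight $\ov{\mathcal G}_n$-module of highest weight $\la^{(n)}$. The main obstacle is to conclude that this quotient is irreducible, hence equal to $\ov L_n(\la^{(n)})$. My preferred route is to construct an inflation functor $\Xi\colon \ov{\mc O}_n^d\to\ov{\mc O}^d$ corresponding to the embedding $P_{\ov Y_n}^d\hookrightarrow P_{\ov Y}^d$ via $\mu\mapsto\widehat\mu$ (extension by $-d$'s), verify that $\Xi$ is left adjoint to $\ov\Tr_n$, and use this adjunction to transport simplicity. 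An alternative direct approach is to show that any $\ov{\mathcal G}_n$-singular vector in $\ov\Tr_n(\ov L(\la))$ of weight different from $\la^{(n)}$ would, through iterated commutation with the root vectors of $\ov{\mathcal G}$ that lie outside $\ov{\mathcal G}_n$, produce a $\ov{\mathcal G}$-singular vector in $\ov L(\la)$, contradicting simplicity. Making either variant of this lifting rigorous—so as to reduce the simple case cleanly to the Verma case already handled—is the technical heart of the argument.
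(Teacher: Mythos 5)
Your treatment of the Verma case and of the vanishing case is essentially sound: the statistic $\tau(\mu)=\sum_{k\ge n}(\mu_{k+\hf}+d)$ is the right invariant, since every root of $\mf u^+$ has non-positive $\ep$-coefficients (note that for heads of type $\mf b,\mf c,\mf d$ these include roots such as $-\ep_r-\ep_s$, $-\delta_i-\ep_r$, $-2\ep_r$, not only $\delta_i-\ep_j$ as you write), and the tail restriction of each Levi constituent is a polynomial $\gl(\infty)$-module twisted by $-d$, so that $\tau\ge\tau(\la)$ on all weights and all tail coordinates are $\ge -d$. (You are also right to work with the condition ``$\mu_{k+\hf}=-d$ for all $k\ge n$'' rather than the single equality $\mu_{n+\hf}=-d$ appearing in the displayed definition of $\ov\Tr_n$; the single equality does not by itself propagate to larger $k$ for non-dominant weights of a Levi constituent, and the stronger condition is the one needed.) The genuine gap is the irreducible case, which you yourself flag as unresolved. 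Neither proposed route is carried out, and the adjunction route is doubtful: there is no Lie superalgebra homomorphism $\ov{\mathcal G}\to\ov{\mathcal G}_n$, so an inflation functor left adjoint to $\ov\Tr_n$ cannot be obtained by restriction, and constructing one is at least as delicate as the statement itself.

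The missing idea is much simpler than the ``iterated commutation'' you envisage. For any $M\in\ov{\mc O}^d$, every weight $\mu$ of $M$ satisfies $\mu_j\ge -d$ for all tail indices $j$, because the $\mf l_{\ov Y}$-constituents restrict to polynomial $\gl(\infty)$-modules twisted by $-d$. On the other hand, every positive root $\gamma$ of $\ov{\mathcal G}$ that is not a root of $\ov{\mathcal G}_n$ carries a strictly negative coefficient on some $\ep_j$ with $j>n-\hf$ (this is the case for $\ep_r-\ep_s$ with $s>n-\hf$, for $\pm\delta_i-\ep_j$, for $-\ep_r-\ep_j$, $-2\ep_j$, and so on). Hence if $v\in\ov\Tr_n(M)$ has weight $\mu$, then $e_\gamma v$ would have weight $\mu+\gamma$ with $(\mu+\gamma)_j<-d$, which is impossible, so $e_\gamma$ annihilates all of $\ov\Tr_n(M)$. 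Consequently a $\ov{\mathcal G}_n$-singular vector in $\ov\Tr_n(M)$ is automatically a $\ov{\mathcal G}$-singular vector in $M$. Taking $M=\ov L(\la)$, whose only singular vector up to scalar is $v_\la$, every nonzero $\ov{\mathcal G}_n$-submodule of $\ov\Tr_n(\ov L(\la))$ contains a singular vector (pick a maximal weight, which exists by condition (ii) of the category), hence contains $v_\la$, hence is everything; irreducibility follows at once, with no lifting of non-highest singular vectors required.
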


\begin{remark}
By Proposition \ref{finite trunc} and Theorem
\ref{th:SD}, the classical Kazhdan-Lusztig solution of the
irreducible character problem of $\mc O$ induces a complete solution of the irreducible
character problem of $\ov{\mc O}_n$ for each finite $n$.
It is not hard to show \cite{CLW} that every finite-dimensional irreducible modules of
every ortho-symplectic Lie superalgebra appears in some $\ov{\mc O}_n$.
\end{remark}

We end the paper with a list of symbols.

\medskip

\label{table1}
\begin{tabular}{r l}
{\bf Symbol}& {\bf Meaning}\\
\hline\hline\\
$|v|$& $\Z_2$-parity of a homogeneous vector $v$ in a vector superspace\\
$\C^{m|n}$&  complex vector superspace of super dimension $m|n$\\
$\gl (m|n)$&  the general Lie superalgebra \\
$I(m|n)$&  the totally order set $\{\ov 1<\ov 2< \ldots<\ov m< 1<\ldots<n\}$\\
$\sgl (m|n)$&  the special linear Lie superalgebra \\
$\h$&  Cartan subalgebra (of diagonal matrices)\\
$\delta_i$&  dual basis element corresponding to $E_{\ov i\ov i}$\\
$\ep_j$&  dual basis element corresponding to $E_{jj}$\\
$\Phi, \Phi_{\bar 0}, \text{ or } \Phi_{\bar 1}$&  sets of all, even, or odd
    roots\\
$\Pi$& set of simple roots\\
$\Phi^\pm_\epsilon$&  set of positive/negative roots of parity
    $\epsilon=\ov{0},\ov{1}$\\
$\osp(m|n)$&  ortho-symplectic Lie superalgebra preserving a\\
& non-degenerate supersymmetric bilinear form on $\C^{m|n}$\\
$\spo(m|n)$&  symplectic-orthogonal Lie superalgebra preserving a\\
 &   non-degenerate skew-supersymmetric bilinear form on $\C^{m|n}$\\
$U(\ga)$&  universal enveloping algebra of a Lie (super)algebra $\ga$\\
$K(\la)$&  Kac module of highest weight $\la$\\
$L(\ga,\la)$ or $L(\la)$ &  irreducible $\ga$-module of highest
weight $\la$ \\
$\text{ch}M$&  character of the $\h$-semisimple module $M$\\
$\rho_0,\rho_1$&  half sums of positive even and odd roots, respectively\\
$\rho$& $\rho_0-\rho_1$\\
$\mf{S}_d$&  symmetric group in $d$ letters\\
$S^\la$&  Specht module corresponding to the partition $\la$\\
$\Delta(\la)$&  (parabolic) Verma $\ga$-module of highest weight $\la$\\
$\mu'$&  conjugate partition of the partition $\mu$\\
$\Pdmn$&  set of $(m|n)$-hook partitions of size $d$ (Definition
    \ref{hook:def})\\
$\Pmn$& $\bigcup_{d\ge 0}\Pdmn$\\
$s_\la(\x)$&  Schur function in $\x$ for the partition $\la$\\
$hs_\la(\x,\y)$&  hook Schur function for the hook
    partition $\la$ (\ref{mn:hook})\\
$\diamondsuit_r$, $\diamondsuit_{k,r}$&  row determinants given in
    (\ref{deltar}) and (\ref{eq_det}), respectively\\
$\mf{a}^+_\infty$&  (one-sided) infinite-rank Kac-Moody Lie algebras of type
    $A$\\
$\mf{b}_\infty, \mf{c}_\infty, \mf{d}_\infty$&  infinite-rank Kac-Moody Lie
    algebras of types $B,C,D$ \\
$\La^{\mf c}_i$&  $i$th fundamental weight of $\mf c_\infty$\\
$\la_w$&  see (\ref{lambdaw})\\
${\mathcal G}^{\mf x},\ov{{\mathcal G}}^{\mf
x},\widetilde{{\mathcal G}}^{\mf x}$& Lie (super)algebras
    corresponding to master diagrams of\\ &Section \ref{masters} of type $\mf{x}$;
    also denoted by ${\mathcal G},\ov{{\mathcal G}},\widetilde{{\mathcal G}}$, respectively\\
$\mc O, \ov{\mc O}, \widetilde{\mc O}$&  parabolic categories of
${\mathcal G}$-,
    $\ov{\mathcal G}$- and $\wt{\mathcal G}$-modules, respectively \\ &(Sections \ref{sec:O} and
    \ref{sec:allO})\\
$\ov{\Delta}(\la),\wt{\Delta}(\la)$&  parabolic Verma modules in
    $\ov{\mc O}, \widetilde{\mc O}$
    (Section \ref{Tfunctors})\\
$\ov{L}(\la),\wt{L}(\la)$&  irreducible modules in $\ov{\mc O},
    \widetilde{\mc O}$ (Section
    \ref{Tfunctors})\\
\end{tabular}

\label{table2}
\begin{tabular}{r l}
{\bf Symbol}& {\bf Meaning}\\
\hline\hline\\
\makebox(23,0){$\oval(20,12)$}\makebox(-20,8){$\mf{x}$}&  head Dynkin diagrams
    of types $\mf{x=a,b,c,d}$ (Section \ref{masters})\\
$P^d_Y, P^d_{\ov Y}, P^d_{\wt Y}$&  set of dominant weights for
    ${\mathcal G},\ov{{\mathcal G}},\widetilde{{\mathcal G}}$, respectively (Section \ref{sec:allO})\\
$\natural,\theta$&  bijections from $P^d_Y$ to $P^d_{\ov Y}$ and $P^d_{\wt Y}$ (Section
    \ref{sec:allO}), respectively\\
$T$, or $\ov{T}$&  functors from $\wt{\mc{O}}$ to ${\mc O}$, or
    to $\ov{\mc O}$, respectively (Section \ref{Tfunctors})\\
$\ov{\mc O}_n$&  parabolic category of a finite-dimensional Lie
    (super)algebra\\
    & (Section \ref{char})\\
$\mf{Tr}_n$&  truncation functor from $\ov{\mc{O}}$ to $\ov{\mc O}_n$ (Section
    \ref{char})\\
\end{tabular}

\bigskip
\frenchspacing

\end{document}